\newcommand{\blue}{\textcolor{black}}
\newtheorem{theorem}{Theorem}
\newtheorem{lemma}[theorem]{Lemma}
\newtheorem{definition}[theorem]{Definition}
\newtheorem{example}[theorem]{Example}
\newtheorem{remark}{Remark}
\numberwithin{equation}{section}
\numberwithin{theorem}{section}
\numberwithin{subsection}{section}
\newcommand{\R}{\mathbb{R}}
\newcommand{\N}{\mathbb N}
\newcommand{\pa}{\partial}
\newcommand{\LO}[1]{L^{#1}(\Omega)}
\newcommand{\LQ}[1]{L^{#1}(Q_T)}
\newcommand{\LQtauT}[1]{L^{#1}(Q_{\tau,T})}
\newcommand{\bra}[1]{\left(#1\right)}
\newcommand{\sbra}[1]{\left[#1\right]}
\newcommand{\sumi}{\sum_{i=1}^{m}}
\newcommand{\intO}{\int_{\Omega}}
\newcommand{\eps}{\varepsilon}
\newcommand{\EE}{\mathscr{E}}
\newcommand{\vr}{\varrho}
\newcommand{\vpt}{\varphi_\tau}
\title[Reaction-diffusion systems with critical growth rates]{\blue{Analysis of mass controlled reaction-diffusion systems with nonlinearities having critical growth rates}}
\author{Chunyou Sun}
\address{Chunyou Sun \hfill\break
	School of Mathematics and Statistics, Lanzhou University Lanzhou, 730000, PR China}
\email{sunchy@lzu.edu.cn}
\author{Bao Quoc Tang}
\address{Bao Quoc Tang \hfill\break
	Institute of Mathematics and Scientific Computing, University of Graz,
	Heinrichstrasse 36, 8010 Graz, Austria}
\email{quoc.tang@uni-graz.at, baotangquoc@gmail.com}
\author{Juan Yang}
\address{Juan Yang \hfill\break
	Institute of Mathematics and Scientific Computing, University of Graz,
	Heinrichstrasse 36, 8010 Graz, Austria,
	\hfill\break
School of Mathematics and Statistics, Lanzhou University Lanzhou, 730000, PR China}
\email{yangjuan18@lzu.edu.cn, yangjuan0912@gmail.com}
\begin{document}

\begin{abstract}
	We analyze semilinear reaction-diffusion systems that are mass controlled, and have nonlinearities that satisfy critical
	growth rates. The systems under consideration are only assumed to satisfy natural assumptions, namely the preservation of non-negativity and a control of the total mass. It is proved in dimension one that if nonlinearities have (slightly super-) cubic growth rates then the system has a unique global classical solutions. Moreover, in the case of mass dissipation, the solution is bounded uniformly in time in sup-norm. One key idea in the proof is the H\"older continuity of gradient of solutions to parabolic equation with possibly discontinuous diffusion coefficients and low regular forcing terms.  When the system possesses additionally an entropy inequality, the global existence and boundedness of a unique classical solution is shown for nonlinearities satisfying a cubic intermediate sum condition, which is a significant generalization of cubic growth rates. The main idea in this case is to combine a modified Gagliardo-Nirenberg inequality and the newly developed $L^p$-energy method in \cite{morgan2021global,fitzgibbon2021reaction}. This idea also allows us to deal with the case of discontinuous diffusion coefficients in higher dimensions, which has \blue{only recently been touched} in the context of mass controlled reaction-diffusion systems.
\end{abstract}

\maketitle
\noindent{\small{\textbf{Classification AMS 2010:} 35A01, 35A09, 35K57, 35Q92.}}

\noindent{\small{\textbf{Keywords:} Reaction-diffusion systems; Classical solutions; Global existence; Mass control; Cubic nonlinearities }}

\tableofcontents

\section{Introduction and main results}
Let $\Omega=(0,L)$ for $L>0$. We study \blue{the} reaction-diffusion system for vector of concentrations $u=(u_1,\cdots,u_m): \Omega\times \R_+ \to \R^m$,$m\geq 1$, \blue{given by}

\begin{equation}\label{eq1.1}
  \begin{cases}
    \partial_{t}u_i-d_i\partial_{xx}u_i=f_i(x,t,u), &x\in \Omega,~ t>0,\\
    \partial_xu_i(0,t)=\partial_xu_i(L,t)=0, &t>0,\\
    u_i(x,0)=u_{i,0}(x), &x\in\Omega,
  \end{cases}
\end{equation}
where $d_i>0$ are diffusion coefficients, the initial data are non-negative and bounded, i.e. $0\leq u_{i,0}\in L^{\infty}(\Omega), \forall i=1,\ldots, m$. \blue{The} nonlinearities satisfy the following assumptions:

\begin{enumerate}[label=(A\theenumi),ref=A\theenumi]

	\item\label{A1} (Local Lipschitz)  \blue{For all $i=1,\cdots, m$, $f_i:\Omega\times\mathbb{R}_+\times\mathbb{R}_+^m\rightarrow \mathbb{R}$ is locally Lipschitz continuous in the third argument, continuously differentiable in the second argument, and two times continuously differentiable in the first argument. }
		
	\noindent \blue{(Quasi-positivity) Moreover, they are quasi-positive, that is for any $i\in {1,\cdots ,m}$ and any $(x,t)\in\Omega\times\R_+$, it holds
	$$f_i(x,t,u)\geq 0 \text{ provided } u\in \mathbb{R}^m_+ \text{ and } u_i=0.$$}
	
	\item\label{A2} (Mass control)
	\begin{equation*}
	  \sum^m_{i=1}f_i(x,t,u)\leq k_0+k_1\sum^m_{j=1}u_j,~~\forall u\in \R_+^m, \; \forall (x,t)\in \Omega\times \R_+,
	\end{equation*}
	for some $k_0\geq 0, k_1\in\mathbb{R}.$
\end{enumerate}

The local Lipschitz continuity of nonlinearities in \eqref{A1} ensures the existence of a local classical solution in a maximal time interval. The quasi-positivity is a preservation of non-negativity\blue{. That is, if} the initial data are non-negative, then the solution is non-negative as long as it exists. This property has a simple physical interpretation\blue{. If a} concentration is zero at a time then it cannot be consumed in a reaction. Reaction-diffusion systems satisfying \eqref{A1} and \eqref{A2} appear naturally in modeling many real life phenomena, ranging from chemistry, biology, ecology, or social sciences. Remarkably, these two natural assumptions are not enough to ensure global existence of bounded solutions as it was pointed out by counterexamples \blue{in \cite{pierre2000blowup} and \cite{Pierre2023Examples}}. In fact, without further assumptions on the nonlinearities, it is not known if \eqref{eq1.1} possesses a \blue{global solution in any sense}. One main reason is that from \eqref{A1} and \eqref{A2} only limited a priori estimates have been derived. More precisely, by summing the equations of \eqref{eq1.1}, integrating in $\Omega$ and using \eqref{A2}, one (formally) obtains
\begin{equation*}
	\frac{d}{dt}\sumi \intO u_i(x,t)dx \leq k_0|\Omega| + k_1\sumi \intO u_i(x,t)dx.
\end{equation*}
The standard Gronwall inequality implies for any $T>0$,
\begin{equation*}
	\sumi\intO u_i(x,t)dx \leq e^{k_1T}\sumi \intO u_i(x,0)dx + \frac{k_0}{k_1}|\Omega|e^{k_1T} \quad \forall t\in (0,T).
\end{equation*}
Thanks to the non-negativity of solutions implied by \eqref{A1}, we get
\begin{equation}\label{LinftyL1}
	u_i\in L^{\infty}(0,T;\LO{1})
\end{equation}
$\text{ for all } i=1,\ldots, m, \; \text{ and all } \; T>0.$
With this estimate, it was shown early in \cite{morgan1989global} that \eqref{eq1.1} has a global classical solution if the nonlinearities are bounded by
\begin{equation}\label{bounded}
	|f_i(u)| \leq C\left(1+\sum_{j=1}^m u_j\right)^r \quad \forall i=1,\ldots, m, \; \forall u\in \R_+^m,
\end{equation}
where the growth rate $r$ is sub-critical\footnote{One observes that this critical growth $r_{\text{critical}}$ is the same as the Fujita exponent for seminar heat equation, see e.g. \cite{fujita1966blowing}}
\begin{equation}\label{critical}
	1\leq r < r_{\text{critical}}:= 1 + \frac{2}{n},
\end{equation}
where $n\in \mathbb N$ is the spatial dimension, i.e. $\Omega\subset\R^n$. It is observed that in one dimension, the global existence is obtained for the nonlinearities that have \textit{sub-cubic} growth rates. For two and higher dimensions, the results in \cite{morgan1989global} require the nonlinearities to be \textit{sub-quadratic}. By using a \textit{duality method}, it can be shown, \blue{see e.g.\cite{morgan1989global}, \cite{morgan1990boundedness},} \cite{pierre2000blowup}, that
\begin{equation}\label{L2}
	u_i \in L^{2}(0,T;L^2(\Omega))
\end{equation}
for all $i=1,\ldots, m$, and all $T>0$. When the nonlinearities have at most quadratic growth rates, this estimate \blue{implies} global \textit{weak solutions} in all dimensions \cite{desvillettes2007global}. An improved duality technique in \cite{canizo2014improved} showed that \eqref{L2} can be slightly improved, i.e. there exists $\eps>0$ depending only on the domain and diffusion coefficients such that
\begin{equation}\label{L2+eps}
	u_i\in L^{2+\eps}(0,T;L^{2+\eps}(\Omega))
\end{equation}
for all $i=1,\ldots, m$, and all $T>0$. This allows the authors in \cite{canizo2014improved} to obtain global classical solutions for systems of type \eqref{eq1.1} with quadratic nonlinearities in dimension two. \blue{In fact, global existence for quadratic nonlinearities with space dimension $n\leq 2$ was in fact first proved in \cite{Morgan2004Global} even with quasilinear diffusion}. Systems of form \eqref{eq1.1} with quadratic nonlinearities are in fact of high interest and importance due to \blue{their} relevance in bimolecular reactions and population dynamics such as Lotka-Volterra or SIR systems. The global existence of classical \blue{solutions} for such systems in three and higher dimensions \blue{was} open until recently when it was proved affirmatively in \blue{four works \cite{caputo2019solutions, souplet2018global, fellner2020global,fellner2021uniform}}.

\medskip
The global existence of bounded solution to \eqref{eq1.1} with \textit{super-quadratic} nonlinearities is widely open. Up to our knowledge, all existing results are \textit{conditional}, in the sense that additional assumptions are imposed: for instance, when the diffusion coefficients are close to each other \cite{canizo2014improved,morgan2020boundedness}, when diffusion coefficients are large enough \cite{cupps2021uniform}, or when the initial data are small enough \cite{cupps2021uniform}. This is the main motivation of the this paper where we investigate the global existence of classical solution in one dimensional for \eqref{eq1.1} with nonlinearities having {\it cubic or (slightly) higher growth rates}.

\medskip
Our first main result is the following theorem.

\begin{theorem}[Global classical solutions with (slightly super-)cubic nonlinearities]\label{th1.1}
 Let $\Omega = (0,L)$ for some $L>0$. Assume \eqref{A1} and \eqref{A2}. Then there exists $\varepsilon>0$ depending on $\Omega, m, d_i, k_0$ such that if the nonlinearities satisfy
  \begin{enumerate}[label={\normalfont (A\theenumi)},ref=A\theenumi]
  	\setcounter{enumi}{2}
  	\item\label{A3}
  	\begin{equation*}
  		|f_i(x,t,u)| \leq C\bra{1+|u|^{3+\eps}} \quad \forall i=1,\ldots, m,\; \forall u\in \R_+^m,\; \forall (x,t)\in \Omega\times \R_+
  	\end{equation*}
  \end{enumerate}
  for some constant $C>0$, the system \eqref{eq1.1} possesses a unique global classical solution. Moreover, if $k_1 = k_0 = 0$, then
	  \begin{equation*}
	  	\sup_{i}\limsup_{t\to\infty}\|u_i(t)\|_{\LO{\infty}} < +\infty.
	  \end{equation*}
\end{theorem}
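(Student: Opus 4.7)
The plan is to invoke the standard blow-up criterion: assumption \eqref{A1} produces, via classical parabolic theory, a unique nonnegative solution on some maximal interval $[0,T_{\max})$, and it suffices to establish $u_i \in \LQ{\infty}$ on every $T < T_{\max}$ in order to force $T_{\max} = +\infty$. The argument couples the improved duality method for parabolic equations with bounded measurable diffusion with the paper's key H\"older-continuity-of-gradient result, which extracts strong spatial regularity from a merely weakly integrable source.

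\textbf{Step 1 (a priori $L^{2+\varepsilon_0}$-estimate).} Set $w = \sumi u_i$ and $z = \sumi d_i u_i$. Summing \eqref{eq1.1} and using \eqref{A2} gives
\begin{equation*}
\partial_t w - \partial_{xx} z = \sumi f_i \leq k_0 + k_1 w, \qquad \partial_x z|_{\partial\Omega} = 0.
\end{equation*}
Since $z = Dw$ with $D = z/w \in [d_{\min}, d_{\max}]$ bounded and measurable, this is a scalar parabolic problem with discontinuous diffusion. Integrating in $x$ and using Gronwall yields the $L^\infty(0,T;\LO{1})$-bound on $w$; the improved duality method (following Ca\~nizo-Desvillettes-Fellner) then supplies some $\varepsilon_0 > 0$, depending only on $\Omega$, the $d_i$, $k_0$ and (for finite $T$) on $T$ and $k_1$, for which $w \in L^{2+\varepsilon_0}(Q_T)$.

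\textbf{Step 2 (H\"older-gradient bootstrap).} By \eqref{A3} we have $|\sumi f_i| \leq Cm(1 + w^{3+\varepsilon})$, hence $\sumi f_i \in L^{(2+\varepsilon_0)/(3+\varepsilon)}(Q_T)$. Choose $\varepsilon > 0$ small enough so that this exponent exceeds $1$, and apply the key H\"older-continuity-of-gradient lemma to the parabolic equation $\partial_t w - \partial_{xx}(Dw) = \sumi f_i$, whose diffusion $D$ is only bounded measurable and whose forcing lies in some $L^p$ with $p > 1$. This delivers spatial H\"older regularity strong enough, when combined with the time-integrated $L^1$-bound, to conclude $w \in \LQ{\infty}$. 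The nonnegativity $0 \leq u_i \leq w$ transfers the bound to each $u_i$, and parabolic Schauder theory applied to $\partial_t u_i - d_i \partial_{xx} u_i = f_i$ then promotes $u_i$ to the asserted classical regularity; uniqueness follows from the local Lipschitz assumption. The \emph{main obstacle} here is that the duality exponent $\varepsilon_0$ is generally small (and degenerates as the $d_i$ spread apart), so a naive $L^p$-bootstrap would demand a far larger gap above $L^2$ than duality delivers; the power of the H\"older-gradient estimate is that it closes the argument in a single step as soon as the forcing lies in any $L^p$ with $p > 1$, which is exactly the margin available once $\varepsilon < \varepsilon_0 - 1$, accounting for the announced dependence $\varepsilon = \varepsilon(\Omega, m, d_i, k_0)$.

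\textbf{Step 3 (uniform-in-time bound when $k_0 = k_1 = 0$).} In this conservative regime $t \mapsto \intO w(\cdot,t)\,dx$ is non-increasing, so the $L^1$-bound is time-uniform and the $\varepsilon_0$ of Step 1 can be chosen independent of $T$. Executing Steps 1-2 on each translated slab $\Omega \times [\tau, \tau+1]$, $\tau \geq 0$, then yields $L^\infty$-bounds on $u_i$ over $\Omega \times [\tau + \tfrac12, \tau + 1]$ that are uniform in $\tau$, giving the claimed $\sup_i \limsup_{t\to\infty} \|u_i(t)\|_{\LO{\infty}} < +\infty$.
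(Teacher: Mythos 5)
There is a fatal gap in Step 2, and it is an arithmetic one that you almost state yourself. The improved duality method of Ca\~nizo--Desvillettes--Fellner yields $w\in L^{2+\varepsilon_0}(Q_T)$ only for a \emph{small} $\varepsilon_0>0$ (it degenerates as the $d_i$ spread apart, and in no regime does it reach $\varepsilon_0\ge 1$). With the cubic bound \eqref{A3} this places $\sumi f_i$ in $L^{(2+\varepsilon_0)/(3+\varepsilon)}(Q_T)$, and your requirement that this exponent exceed $1$ reads $\varepsilon<\varepsilon_0-1$, which is vacuous since $\varepsilon_0<1$: the forcing is not even in $L^1$ by this route, let alone in some $L^p$ with $p>1$. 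So the bootstrap never starts. Moreover, even granting an $L^p$ forcing with $p$ barely above $1$, the paper's H\"older-gradient lemma (Lemma \ref{key_lem2}) is not the tool you describe: it applies to an equation in divergence form whose right-hand side is $\pa_x f$ with $f\in L^\infty$, and it is applied to the \emph{time-antiderivative} $v(x,t)=\int_0^t\sumi d_iu_i\,ds$, not to $w=\sumi u_i$. De Giorgi--Nash--Moser boundedness for $\pa_tw-\pa_{xx}(Dw)=F$ in one dimension would in any case require $F\in L^p$ with $p>3/2$, which you do not have.

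The paper's actual mechanism is structurally different and worth internalizing: after reducing \eqref{A2} to the equality case \eqref{A2'} by the change of variables $w_i=e^{-k_1t}u_i$ and the addition of an auxiliary species $w_{m+1}$, one integrates the summed equation in time to get $b\,\pa_tv-\pa_{xx}v=G$ with $G\in\LQ{\infty}$. The $L^\infty(0,T;\LO{1})$ bound upgrades to $v\in\LQ{\infty}$ (via the 1D embedding $H^1\hookrightarrow L^\infty$), then Lemmas \ref{lem-diff} and \ref{key_lem2} give boundedness and H\"older continuity of $\pa_xv$ with constants depending only on $\|G\|_{\LQ{\infty}}$. The regularity-interpolation Lemma \ref{lem-Diri} then bounds $\sup|\pa_{xx}v|$ by $\bigl(\sup|\pa_xU_d+\pa_xG|\bigr)^{\frac{1-\alpha}{2-\alpha-\delta}}$, and Lemma \ref{lem2.3} bounds $\sup|\pa_xu_i|$ by $U^{2+\varepsilon/2}$ where $U=\sup_i\sup_{Q_T}|u_i|$. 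Since $\sumi u_i\le m(\sup|\pa_{xx}v|+\|G\|_{\LQ{\infty}})$, one obtains $U\le C(1+U^{(2+\varepsilon/2)\frac{1-\alpha}{2-\alpha-\delta}})$ with exponent strictly below $1$ for small $\varepsilon$, and Young's inequality closes the estimate. It is this self-improving, sublinear-power fixed-point structure --- entirely absent from your proposal --- that absorbs the cubic growth; no duality estimate beyond $L^1$ is needed.
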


\begin{remark}\hfill
	\begin{itemize}
  \item It is clear that the results of Theorem \ref{th1.1} still hold if \eqref{A2} is replaced by
  \begin{equation*}
  	\sum^m_{i=1}\alpha_if_i(x,t,u)\leq k_0 + k_1 \sumi u_i \qquad \text{for all}\quad u\in \R_+^m, (x,t)\in \Omega\times\R_+,
  \end{equation*}
  for some $(\alpha_i)_{i=1,\cdots,m}\in(0,\infty)^m.$
	\item   \blue{It is interesting that if a convective term is added to any of the equations, then the arguments fall apart in Theorem \ref{th1.1}. This is due to the fact that a new H\"older continuity results does not seem to hold.}
\end{itemize}
\end{remark}
To prove Theorem \ref{th1.1}, we first consider the case where \eqref{A2} is replaced by a (seemingly) stronger assumption
\begin{enumerate}[label=(A\theenumi'),ref=A\theenumi']
	\setcounter{enumi}{1}
	\item\label{A2'} there exists $g\in \LQ{\infty}$ \blue{with} $\pa_xg\in \LQ{\infty}$   such that
	\begin{equation*}
		\sumi f_i(x,t,u) = g(x,t), \quad \forall u\in \R_+^m, \; \forall(x,t)\in \Omega\times\R_+.
	\end{equation*}
\end{enumerate}

We will prove the following theorem.
\begin{theorem}\label{th1.2}
Let $\Omega = (0,L)$ for some $L>0$. Assume  \eqref{A1}, \eqref{A2'} and \eqref{A3}. Then for any non-negative, bounded initial data, \eqref{eq1.1} has a unique global classical solutions. Moreover, if $g \equiv 0$, the solution \blue{is uniformly sup-norm bounded}, i.e.
	\begin{equation}\label{ubit}
		\sup_{i}\limsup_{t\to\infty}\|u_i(t)\|_{\LO{\infty}}<+\infty.
	\end{equation}
\end{theorem}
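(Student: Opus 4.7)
The plan is to reduce the question of global classical existence to an $L^\infty$ bound on the total concentration $\rho := \sum_{i=1}^m u_i$. Once this bound is in hand, non-negativity from \eqref{A1} yields $0 \leq u_i \leq \rho \in L^\infty(Q_T)$ for every $i$, whereupon \eqref{A3} gives $f_i \in L^\infty$, and standard parabolic $L^p$-maximal regularity combined with the one-dimensional parabolic Sobolev embedding $W^{2,1}_p \hookrightarrow L^\infty$ turns the local solution into a global classical one and shows $T_{\max} = +\infty$.

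To produce the $L^\infty$ bound on $\rho$, I would exploit \eqref{A2'} by summing the equations of \eqref{eq1.1}. Setting $\sigma := \sum_i d_i u_i$ and $D(x,t) := \sigma/\rho \in [\min_i d_i,\, \max_i d_i]$ (bounded measurable, uniformly elliptic, extended arbitrarily where $\rho = 0$), the total density satisfies the scalar equation
\[
\partial_t \rho - \partial_{xx}(D \rho) = g.
\]
Integrating once in space and using the Neumann conditions on each $u_i$ (which imply $\partial_x \sigma = 0$ at $x = 0$), the primitive $W(x,t) := \int_0^x \rho(y,t)\,dy$ solves the divergence-form equation
\[
\partial_t W - \partial_x(D\, \partial_x W) = G, \qquad G(x,t) := \int_0^x g(y,t)\,dy,
\]
with $W(0,t) = 0$ and $\partial_x W = \rho$. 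Assumption \eqref{A2'} gives both $G \in L^\infty(Q_T)$ and $\partial_x G = g \in L^\infty(Q_T)$, so the source is bounded and Lipschitz in $x$.

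The heart of the proof is then to upgrade the classical De Giorgi--Nash--Moser H\"older continuity of $W$ into H\"older continuity of its gradient $\partial_x W = \rho$. This is precisely the gradient H\"older regularity statement for 1D divergence-form parabolic equations with discontinuous diffusion and low-regularity source announced in the abstract; applying it to the equation for $W$ would yield $\rho \in C^\alpha(\overline{Q_T}) \hookrightarrow L^\infty(Q_T)$ and close the bootstrap. I expect this step to be the main obstacle: in dimension $n \geq 2$ such a conclusion fails without continuity of the diffusion coefficient, and even in one dimension the extra Lipschitz-in-$x$ structure of the source inherited from \eqref{A2'} seems essential.

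Finally, when $g \equiv 0$ the total mass $\int_\Omega \rho\,dx$ is conserved and the equation for $W$ becomes homogeneous, $\partial_t W = \partial_x(D \partial_x W)$. I would apply the gradient regularity result on time slabs $[t, t+1]$ with constants depending only on the conserved mass and the ellipticity bounds of $D$ to obtain a H\"older bound on $\rho$ that is uniform in $t$, and then propagate this bound to each $u_i$ through parabolic smoothing on the individual equations to conclude \eqref{ubit}.
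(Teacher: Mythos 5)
There is a genuine gap, and it sits exactly at the step you yourself flag as ``the heart of the proof.'' After integrating in space you arrive at a \emph{divergence-form} equation $\partial_t W - \partial_x(D\,\partial_x W)=G$ in which the discontinuous coefficient $D$ sits \emph{inside} the divergence. For such equations, H\"older continuity (or even continuity) of $\partial_x W$ is false when $D$ is merely bounded measurable, already in one dimension and already in the stationary case: if $\partial_x(D\,\partial_x W)=0$ with $D$ piecewise constant, then the flux $D\,\partial_x W$ is constant while $\partial_x W$ jumps wherever $D$ does. The regularity result announced in the abstract is not this statement. The paper instead integrates the summed equation \emph{in time}, setting $v(x,t)=\int_0^t\sum_i d_i u_i(x,s)\,ds$, which satisfies $b(x,t)\,\partial_t v-\partial_{xx}v=G$ with $b$ bounded above and below --- the rough coefficient multiplies $\partial_t v$ and the second derivative appears \emph{pure}. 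Differentiating that equation in $x$ produces a divergence-form equation for $w=\partial_x v$ itself, so De Giorgi theory yields H\"older continuity of $\partial_x v$ (after first establishing $\partial_x v\in L^\infty$, which uses the monotonicity $\partial_t v=\sum_i d_iu_i\ge 0$ special to the time primitive). Your $W$ has no such structure, and moreover any gradient-regularity constant would depend on $\|\partial_x W\|_{L^\infty}=\|\rho\|_{L^\infty}$, which is precisely the quantity you are trying to bound --- the argument is circular.

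A second symptom of the gap: your bootstrap never uses the growth hypothesis \eqref{A3} except a posteriori, so if it worked it would give global classical solutions under \eqref{A1} and \eqref{A2'} with \emph{no} growth restriction, a far stronger claim than the theorem. In the paper's proof, \eqref{A3} enters essentially: the regularity-interpolation lemma applied to each species equation gives $\sup_{Q_T}|\partial_x u_i|\le C\bra{1+U^{2+\eps/2}}$ with $U=\sup_i\sup_{Q_T}|u_i|$, and combining this with the H\"older continuity of $\partial_x v$ and a second interpolation (for the Dirichlet problem solved by $\partial_x v$) yields
\begin{equation*}
U \;\le\; \sup_{Q_T}|\partial_{xx}v| + \|G\|_{\LQ{\infty}} \;\le\; C_T\sbra{1+U^{(2+\eps/2)\frac{1-\alpha}{2-\alpha-\delta}}},
\end{equation*}
which closes by Young's inequality only because the exponent is below $1$ for $\eps$ small. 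That final self-improving estimate, where the cubic threshold actually appears, is entirely absent from your sketch; the same mechanism (on unit time slabs with a cut-off) is what gives the uniform-in-time bound when $g\equiv 0$, rather than mass conservation alone.
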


At the first glance, \eqref{A2'} is stronger that \eqref{A2}, and consequently Theorem \ref{th1.2} is weaker than Theorem \ref{th1.1}. However, with a change of variable and introducing a new equation to the system (see subsection \ref{sub2.2} or \cite{fellner2020global}), it can be shown that Theorem \ref{th1.1} can be implied from Theorem \ref{th1.2}.

\medskip
Let us now describe the ideas to prove Theorem \ref{th1.2}. \blue{We follow the approach of Kanel \cite{kanel1990solvability}, which was used there for the case of mass conservation, i.e. when (A2) is fulfilled with an equality sign and $k_0 =k_1 = 0$.} By summing the equations of \eqref{eq1.1}, it follows from \eqref{A2'} that
\begin{equation*}
	\pa_t\bra{\sumi u_i} - \pa_{xx}\bra{\sumi d_iu_i} = g(x,t).
\end{equation*}
Integrating this relation with respect to time on $(0,t)$ gives
\begin{equation}\label{q1}
	\sumi u_i(x,t) - \pa_{xx}\bra{\int_0^t\sumi d_iu_i(x,s)ds} = \sumi u_{i,0}(x) + \int_0^tg(x,s)ds=:G(x,t).
\end{equation}
By defining $v(x,t) = \int_0^t\sumi d_iu_i(x,s)ds$, it follows that
\begin{equation}\label{q1_1}
	\sup_{i=1,\ldots, m}\sup_{\Omega\times(0,T)}|u_i| \leq m\sbra{\sup_{\Omega\times(0,T)}|\pa_{xx}v| + \|G\|_{\LQ{\infty}}}.
\end{equation}

The equation \eqref{q1} can \blue{also be}  written as
\begin{equation}\label{equation_v}
	b(x,t)\pa_tv(x,t) - \pa_{xx}v(x,t) = G(x,t),
\end{equation}
where $b(x,t)$ is bounded from above and below by positive constants. One cornerstone of \cite{fellner2020global} is that \eqref{equation_v} implies $v$ is H\"older continuous with an exponent $\gamma \in (0,1)$. In case when the nonlinearities are bounded by (slightly super-)quadratic polynomials, i.e. $|f_i(u)| \lesssim 1 + |u|^{2+\eps}$, this H\"older continuity of $v$ allows us to ultimately estimate
\begin{equation*}
	\sup_{\Omega\times(0,T)}|\pa_{xx}v| \lesssim 1 + \sbra{\sup_{i=1,\ldots, m}\sup_{\Omega\times(0,T)}|u_i|}^{\frac{3+\eps}{4}+ \frac{1-\gamma}{2(2-\gamma)}}.
\end{equation*}
Inserting this into \eqref{q1_1} gives, thanks to $\frac{3+\eps}{4}+ \frac{1-\gamma}{2(2-\gamma)}<1$ for small enough $\eps>0$,
\begin{equation}\label{q1_2}
	\blue{\sup_{i=1,\ldots, m}\sup_{\Omega\times(0,T)}|u_i| \lesssim C\bra{T,\gamma, \|G\|_{\LQ{\infty}}, \text{initial data}},}
\end{equation}
which implies the global existence \blue{and uniform sup-norm boundedness} of \eqref{eq1.1} in the case of (slightly super-)quadratic nonlinearities in all dimensions \blue{(see \cite{fellner2020global, fellner2021uniform})}. In \textit{dimension one}, similar to \cite[Theorems 2 \& 3]{kanel1990solvability} in the mass conservation case, we will prove that \textit{the spatial derivative $\pa_xv$ is also H\"older continuous} with an exponent $\alpha \in (0,1)$. This improvement is the key element to  deal with (slightly super-) \textit{cubic nonlinearities} as in \eqref{A3}. Indeed, by using the H\"older continuity of $\pa_xv$ and \eqref{A3}, we can estimate
\begin{equation*}
	\sup_{\Omega\times(0,T)}|\pa_{xx}v| \lesssim 1 + \sbra{1+\sup_{i=1,\ldots,m}\sup_{\Omega\times(0,T)}|u_i|}^{\bra{2+\frac{\eps}{2}}\frac{1-\alpha}{2-\alpha-\delta}}
\end{equation*}
for some $0<\delta<\alpha$. Now, by using $\bra{2+\frac{\eps}{2}}\frac{1-\alpha}{2-\alpha-\delta} < 1$ for small enough $\eps>0$, the estimate \eqref{q1_2} follows, hence the global existence of classical solutions. Finally, to show the uniform-in-time boundedness of solution \eqref{ubit}, we study \eqref{eq1.1} on time \blue{intervals} of fixed length with the help of a cut-off function and show that the solution is bounded \blue{independent} of the intervals.

\medskip
The assumption \eqref{A3} requires that {\it all } nonlinearities are bounded by (slightly super-)cubic polynomials. Our second main result shows global existence of bounded solutions to \eqref{eq1.1} with nonlinearities having critical growth rates $r_{\text{critical}}$ and \blue{satisfying} an \textit{entropy inequality} and a \textit{cubic intermediate sum condition}. This condition means that \textit{only one nonlinearity} is assumed to be \blue{bounded by a} cubic polynomial, while the others just need to satisfy an intermediate sum condition of order three. Intermediate sum \blue{conditions} of various orders \blue{have} been studied in \cite{morgan1989global,morgan1990boundedness} and revisited in \cite{morgan2020boundedness,fitzgibbon2021reaction}. In general, an \textit{intermediate sum condition of order $r$} means that
\begin{enumerate}[label=(A\theenumi), ref=A\theenumi]
	\setcounter{enumi}{3}
	\item\label{A4} there exists a lower triangular matrix $A = (a_{ij}) \in \R^{m\times m}$ with non-negative elements and positive diagonal elements such that for any $i=1,\ldots, m$,
	\begin{equation*}
		\sum_{j=1}^{i}a_{ij}f_j(x,t,u) \leq C\bra{1+\sum_{j=1}^mu_j}^r \quad \text{ for all } \;u\in \R_+^m, \; (x,t)\in\Omega\times \R_+
	\end{equation*}
	where $C>0$ is a fixed constant.
\end{enumerate}
It is remarked that \eqref{A4} is significantly more general than \eqref{bounded} (see Example \ref{example}). Using the estimate \eqref{LinftyL1}, it was shown in \cite{morgan1989global,fitzgibbon2021reaction} that under the additional assumption \eqref{A4}, system \eqref{eq1.1} has a unique global classical solution if the growth rate $r$ is sub-critical as in \eqref{critical}. \blue{Note that these results do not allow to have quadratic growth in  dimensions higher than three.} The case of quadratic nonlinearities in two dimension has been recently \blue{revisited} in \cite{morgan2020boundedness} by utilizing the improved duality method first proved in \cite{canizo2014improved}. The case of cubic intermediate \blue{sums} has not been treated in the literature. In this work, we show global classical solutions to \eqref{eq1.1} in one dimension with nonlinearities satisfying a cubic intermediate sum condition and the so-called \textit{entropy inequality}:
\begin{enumerate}[label=(E),ref=E]
	\item \label{E} there exists $\mu_1, \ldots, \mu_m \in \R$ and $k_2, k_3\geq 0$ such that
	\begin{equation*}
		\sum_{i=1}^mf_i(x,t,u)(\log u_i + \mu_i)\leq k_2\sumi u_i(\log u_i + \mu_i-1) + k_3,
	\end{equation*}
	for all $u\in (0,\infty)^m$ and all $(x,t)\in\Omega\times\R_+.$
\end{enumerate}
This entropy inequality appears frequently in (bio-)chemical reactions and therefore has been studied extensively in the literature. By assuming \eqref{E}, \blue{it was proved in \cite{goudon2010regularity}} that \eqref{eq1.1} is globally well-posed in one and two dimensions, respectively, with cubic nonlinearities, i.e. $r=3$ in \eqref{bounded}, and quadratic nonlinearities, i.e. $r=2$ in \eqref{bounded}. When the nonlinearities are \textit{strictly sub-quadratic}, i.e. $r<2$ in \eqref{bounded}, the global existence of classical solution was shown in \textit{all dimensions}, \blue{see \cite{morgan1989global} for a separable Lyupanov approach and \cite{caputo2009global} for a De Giorgi approach}. A breakthrough was shown in \cite{fischer2015global} where the author showed global \textit{renormalized solutions} to \eqref{eq1.1} under \eqref{E} \textit{without any growth assumptions} on the nonlinearities. Our result in the following theorem shows global classical \blue{solutions} to \eqref{eq1.1} under a critical intermediate sum condition, i.e. $r=r_{\text{critical}} = 3$ in \eqref{A4} in one dimension, and the entropy inequality \eqref{E}, \blue{which} significantly generalizes previous results.

\begin{theorem}[Global classical solutions with cubic intermediate sum]\label{th1.3}
	Let $\Omega = (0,L)$ for some $L>0$. Assume \eqref{A1}, \eqref{E} and \eqref{A4} with $r=3$. Moreover, assume that there exist $\ell>0$ and $C>0$ satisfying for all $i=1,\ldots, m$,
	\begin{equation}\label{polynomial}
		f_i(x,t,u) \leq C\bra{1+\sum_{j=1}^{m}u_j^\ell}
	\end{equation}
	for all $u\in \R_+^m$ and all $(x,t)\in\Omega\times\R_+$.	Then for any non-negative, bounded initial data, \eqref{eq1.1} has a unique global classical \blue{solution}. Moreover, if $k_2 = k_3 = 0$ \blue{or $k_2<0$} in \eqref{E}, the solution \blue{is} bounded uniformly in time in sup-norm, i.e.
	\begin{equation*}
		\sup_{i}\limsup_{t\to\infty}\|u_i(t)\|_{\LO{\infty}}<+\infty.
	\end{equation*}
\end{theorem}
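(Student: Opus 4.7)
The proof follows a three-step scheme combining the entropy/entropy-dissipation estimate from \eqref{E}, an $L^p$-energy functional adapted to the triangular structure in \eqref{A4}, and a one-dimensional modified Gagliardo--Nirenberg interpolation that couples the two; the polynomial upper bound \eqref{polynomial} enters only at the final bootstrap to $L^\infty$. First, multiplying the $i$-th equation by $\log u_i + \mu_i$ (after the standard regularization $u_i\mapsto u_i+\delta$, $\delta\downarrow 0$), summing and integrating, the diffusion terms produce $4\sum_i d_i\int_\Omega|\partial_x\sqrt{u_i}|^2\,dx$ while the reaction contribution is bounded by the right hand side of \eqref{E}. A Gronwall argument yields
\[
u_i\in L^\infty(0,T; L^1(\Omega)\cap L\log L(\Omega)), \qquad \sqrt{u_i}\in L^2(0,T;H^1(\Omega)).
\]
In one dimension the embedding $H^1\hookrightarrow L^\infty$ upgrades this to $u_i\in L^1(0,T;L^\infty(\Omega))$, and interpolating with $L^\infty(0,T;L^1)$ gives $u_i\in L^{q_0}(Q_T)$ for some $q_0>3$.

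For the $L^p$-energy step, set $H_k:=\sum_{j=1}^k a_{kj}u_j$ and, for $p$ large, introduce
\[
\mathscr{L}_p[u] := \sum_{k=1}^m \omega_k\int_\Omega(1+H_k)^p\,dx,
\]
where the positive weights $\omega_k$ are determined recursively in $k$ so that the diffusion cross-terms arising from the mismatch $\sum_{j}a_{kj}d_j\partial_x u_j - d_k\partial_x H_k$ can be absorbed, via Young's inequality, into the diagonal dissipations produced by $H_1,\ldots,H_{k-1}$, leaving a strictly positive coefficient $c_p$ controlling $\sum_i\int_\Omega u_i^{p-2}|\partial_x u_i|^2\,dx$. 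Using \eqref{A4} with $r=3$ to bound the reaction terms by $C(1+\sum_j u_j)^3(1+H_k)^{p-1}$, one arrives at
\[
\frac{d}{dt}\mathscr{L}_p[u] + c_p\sum_{i}\int_\Omega u_i^{p-2}|\partial_x u_i|^2\,dx \leq C_p + C_p\int_\Omega\bigl(1+\sum_{j}u_j\bigr)^{p+2}\,dx.
\]

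To close this estimate, we apply the one-dimensional Gagliardo--Nirenberg inequality to $u_i^{p/2}$: a first application with base norm $L^{2/p}$ gives $\int_\Omega u_i^{p+2}\,dx\leq Cp^2\|u_i\|_{L^1}^2\int_\Omega u_i^{p-2}|\partial_x u_i|^2\,dx$, while a second application, interpolating with the entropy-dissipation bound $\sqrt{u_i}\in L^2(H^1)$ from Step~1, produces a splitting of the right-hand side in the form $\tfrac{c_p}{2}\sum_i\int_\Omega u_i^{p-2}|\partial_x u_i|^2\,dx + C\,\Phi(t)\bigl(1+\mathscr{L}_p[u]\bigr)$ with $\Phi\in L^1(0,T)$ depending only on entropy dissipation. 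Absorbing the dissipative term and applying Gronwall yields $\mathscr{L}_p[u]\in L^\infty(0,T)$ for every finite $p$, hence $u_i\in L^\infty(0,T;L^p(\Omega))$ for all $p<\infty$. Combined with \eqref{polynomial}, this gives $f_i(u)\in L^\infty(0,T;L^q)$ for every $q<\infty$, and parabolic $L^p$-regularity in one spatial dimension then upgrades $u_i$ to $L^\infty(Q_T)$, yielding the global classical solution; uniqueness follows from the local Lipschitz continuity of the $f_i$.

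For the uniform-in-time estimate when $k_2=k_3=0$, the entropy functional is non-increasing, so Step~1 gives bounds independent of the time interval; repeating the $L^p$-energy analysis on shifted intervals $[\tau,\tau+1]$ with a smooth time-cutoff, as in the proof of Theorem~\ref{th1.2}, makes the resulting $L^\infty$-bound independent of $\tau$, giving \eqref{ubit}. The principal obstacle lies in the $L^p$-energy step: the recursive determination of the weights $\omega_k$ must simultaneously absorb every diffusion-mismatch cross-term while keeping the residual coefficient $c_p$ strictly positive for all sufficiently large $p$, and the exponents in the two Gagliardo--Nirenberg applications must match the scaling of entropy dissipation so that the multiplier $\Phi$ is integrable in time. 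This delicate coupling of the $L^p$-energy method of \cite{morgan2021global,fitzgibbon2021reaction} with entropy dissipation is precisely what the plain $L^p$-energy approach used for Theorem~\ref{th1.1} cannot achieve under a pure cubic intermediate sum condition with disparate diffusion coefficients.
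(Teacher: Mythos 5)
Your overall strategy (entropy bounds in $L^1$ and $L\log L$, an $L^p$-energy functional adapted to the triangular structure of \eqref{A4}, a modified Gagliardo--Nirenberg interpolation to absorb the cubic reaction terms, and a final bootstrap via \eqref{polynomial} and parabolic smoothing) is the same as the paper's, but there is a genuine gap at the heart of your $L^p$-energy step. The functional $\mathscr{L}_p[u]=\sum_k\omega_k\int_\Omega(1+H_k)^p\,dx$ with $H_k=\sum_{j\le k}a_{kj}u_j$ does not produce a sign-definite dissipation: differentiating $\int_\Omega(1+H_k)^p\,dx$ and integrating by parts yields
\[
-p(p-1)\int_\Omega(1+H_k)^{p-2}\sum_{j,l\le k}a_{kj}a_{kl}\,\tfrac{d_j+d_l}{2}\,\partial_xu_j\,\partial_xu_l\,dx,
\]
and the matrix $\bigl(a_{kj}a_{kl}(d_j+d_l)/2\bigr)_{j,l}$ is indefinite as soon as two of the relevant $d_j$ differ (already for $2\times2$ its determinant is $-a_{k1}^2a_{k2}^2(d_1-d_2)^2/4$). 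Your proposed remedy --- absorbing the off-diagonal terms into the dissipations generated by $H_1,\dots,H_{k-1}$ via a recursive choice of $\omega_k$ --- fails because the weights do not match: the bad term carries the weight $(1+H_k)^{p-2}$, whereas the dissipation available from an index $k'<k$ carries $(1+H_{k'})^{p-2}$, and $H_{k'}$ contains no $u_k$; letting $u_k\to\infty$ with the other components bounded makes the former weight arbitrarily larger than the latter, so no choice of the constants $\omega_k$ closes the Young inequality. This is exactly the obstruction that forces the paper to use the multinomial energy $\EE_p[u]=\int_\Omega\sum_{|\beta|=p}\binom{p}{\beta}\theta^{\beta^2}u^\beta dx$ of \cite{morgan2021global}: there the whole quadratic form in $\partial_xu$ sits under the single common weight $u^\beta$, $|\beta|=p-2$, with matrix $\mathscr{C}^{-1}\mathscr{M}\mathscr{C}^{-1}$, and $\mathscr{M}$ is made diagonally dominant, hence positive definite, by taking the $\theta_i$ large (Lemma \ref{lem1}).

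A second, smaller problem is in your closing of the $\int_\Omega(1+\sum_ju_j)^{p+2}dx$ term. The one-dimensional Gagliardo--Nirenberg inequality against the $L^1$ norm is exactly critical: with $f=u^{p/2}$ one gets $\int_\Omega u^{p+2}dx\leq C\|u\|_{L^1(\Omega)}^2\int_\Omega|\partial_x(u^{p/2})|^2dx+\dots$, where the dissipation enters with exponent exactly $2$ and a fixed constant that cannot be made small, so it cannot be absorbed into $c_p$. The paper circumvents this in two stages: for $p=2$ the modified Gagliardo--Nirenberg inequality of Lemma \ref{MGN} trades smallness for the $L\log L$ norm (bounded by Lemma \ref{lem-1}), giving $\sum_i\|u_i\|_{\LO{4}}^4\leq\eps\sum_i\|u_i\|_{H^1(\Omega)}^2+C_\eps$ and hence $L^\infty(0,T;L^2(\Omega))$; only then, for $p\ge3$, does one interpolate against the $L^2$ bound (\cite[Lemma 2.3]{morgan2021global}), which is subcritical and supplies the required $\eps$. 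You gesture at a two-step interpolation but the inequalities as stated do not close. Also, your claim $u_i\in L^{q_0}(Q_T)$ for some $q_0>3$ in Step 1 is not justified (the entropy dissipation gives $q_0=3$ in one dimension), though nothing later depends on it. Your Step 1, the final $L^\infty$ bootstrap from \eqref{polynomial}, and the uniform-in-time argument do match the paper.
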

We emphasize that by assuming \eqref{E}, the mass control assumption \eqref{A2} can be relaxed. Moreover, the assumption \eqref{polynomial} indicates that the nonlinearities are bounded above by polynomials, but \textit{we do not impose any restriction on the growth rate $\ell$}.

\medskip
Our key idea in proving Theorem \ref{th1.3} is to combine a new $L^p$-energy method, see \cite{morgan2021global,fitzgibbon2021reaction} and a modified Gagliardo-Nirenberg inequality. The $L^p$-energy method deduces that, under the intermediate sum condition \eqref{A4}, one can choose for any $2\leq p \in \mathbb N$ coefficients $\theta_\beta$ such that (see Lemma \ref{lem1}) the energy function
\begin{equation*}
	\EE_p[u]:= \sum_{|\beta| = \beta_1+\ldots + \beta_m = p}\int_{\Omega}\bra{\theta_\beta\prod_{i=1}^{m}u_i^{\beta_i}}dx
\end{equation*}
satisfies
\begin{equation*}
	\frac{d}{dt}\EE_p[u] + \alpha_p\sumi \intO \left|\pa_x(u_i^{p/2}) \right|^2dx \leq C\bra{1+\sumi \intO u_i^{p-1+r}dx}.
\end{equation*}
For $r = 3$, choosing $p = 2$ yields
\begin{equation*}
	\frac{d}{dt}\EE_2[u] + \alpha_2\sumi \|u_i\|_{H^1(\Omega)}^2 \leq C\bra{1+\sumi \|u_i\|_{\LO{4}}^4}.
\end{equation*}
To deal with the terms involving \blue{the} $\LO{4}$-norm on the right-hand side, we first utilize the entropy condition \eqref{E} to obtain a bound of $\|u_i\log|u_i|\|_{\LO{1}}$, then \blue{apply a  modified} Gagliardo-Nirenberg inequality in one dimension to show that the $\LO{4}$-norm can be controlled by the $H^1(\Omega)$-norm and $\|u_i\log|u_i|\|_{\LO{1}}$. This leads to bounds of $\EE_2[u]$ and consequently $L^\infty(0,T;L^2(\Omega))$ bounds, which in \blue{turn gives} $L^\infty(0,T;L^\infty(\Omega))$ bounds  by considering general $2\leq p \in \mathbb N$.

\medskip
There is another advantage of the $L^p$-energy method: it allows us to deal with the case of discontinuous diffusion coefficients. More precisely, consider a variant of \eqref{eq1.1} in arbitrary dimension, i.e. $\Omega \subset \R^n$, $n\geq 1$, with possibly discontinuous diffusion coefficients
\blue{\begin{equation}\label{discontinuous_diffusion}
\begin{cases}
	\pa_t u_i - \nabla_x\cdot (D_i(x,t)\nabla_x u_i) = f_i(x,t,u), & x\in\Omega, \; t>0,\\
	 u_i(x,t) = 0, & x\in\pa\Omega, t>0,\\
	u_{i,0}(x) = u_{i}(x,0), & x\in\Omega,
\end{cases}
\end{equation}}
where the diffusion matrix $D_i:\Omega\times[0,\infty)\rightarrow\mathbb{R}^{n\times n}$ satisfies
    \begin{equation}\label{eq-di}
		\lambda|\xi|^2\leq \xi^\top D_i(x,t)\xi,~\forall(x,t)\in\Omega\times[0,\infty),\forall\xi\in\mathbb{R}^n,\forall i=1,\cdots,m,
	\end{equation}
for some $\lambda>0$ and
    \begin{equation}\label{eq-d}
		D_i\in L^\infty_{\text{loc}}(\R_+;L^{\infty}(\Omega)),\forall i=1,\cdots,m.
	\end{equation}

Due to the low regularity of diffusion coefficients, one cannot expect classical \blue{solutions} to \eqref{discontinuous_diffusion}. The suitable framework is weak solutions. By using similar methods to the proof of Theorem \ref{th1.3}, we show that under \eqref{A1}, \eqref{E} and \eqref{A4}, the system \eqref{discontinuous_diffusion} has a unique global bounded weak solution. While all previous results focused on dimension one, the following theorem is proved in all dimensions.
\begin{theorem}\label{th1.4}
	Let $\Omega \subset \R^n$, $n\geq 1$. Assume \eqref{A1}, \eqref{E}, \eqref{eq-di},\eqref{eq-d} and \eqref{A4} with $r$ in \eqref{A4} fulfills
	\begin{equation*}
		1 \leq r \leq r_{\text{critical}} = 1 + \frac{2}{n}.
	\end{equation*}
   Moreover, assume that there exist $\ell>0$ and $C>0$ satisfying for all $i=1,\ldots, m$,
	\begin{equation}\label{eq-polynomial}
		f_i(x,t,u) \leq C\bra{1+\sum_{j=1}^{m}u_j^\ell}
	\end{equation}
	for all $u\in \R_+^m$ and all $(x,t)\in\Omega\times\R_+$.
	Then, for any non-negative, bounded initial data $u_0 \in L^{\infty}(\Omega)^{m}$, there exists a unique global bounded, non-negative solution to \eqref{discontinuous_diffusion}. Moreover, if $k_2 = k_3 = 0$ in \eqref{E}, the solution is bounded uniformly in time, i.e.
	\begin{equation*}
		\sup_{i}\limsup_{t\to\infty}\|u_i(t)\|_{\LO{\infty}} < +\infty.
	\end{equation*}
\end{theorem}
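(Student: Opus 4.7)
The plan is to adapt the $L^p$-energy plus modified Gagliardo-Nirenberg strategy of Theorem~\ref{th1.3} to the weak-solution framework for uniformly elliptic but possibly discontinuous diffusion, now in arbitrary dimension. The starting point is an approximation: truncating each $f_i$ at height $N$ produces a globally Lipschitz nonlinearity that still obeys \eqref{A1}, \eqref{A4} and \eqref{E}. Classical parabolic theory for systems with bounded measurable coefficients (of Ladyzhenskaya–Solonnikov–Ural'tseva type) yields, for each $N$, a non-negative bounded weak solution $u^N$ on any time interval $(0,T)$ that will then be estimated uniformly in $N$.

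Uniform-in-$N$ estimates are built in three layers. First, testing the $i$-th equation against $\log u^N_i+\mu_i$, summing, and using the ellipticity \eqref{eq-di}, the diffusion term produces a non-negative contribution dominating $4\lambda\sumi \intO |\nabla \sqrt{u^N_i}|^2\,dx$, and \eqref{E} then gives a uniform bound on $\sumi \|u^N_i\log u^N_i\|_{L^\infty(0,T;\LO{1})}$; this replaces the mass control $L^1$ estimate used elsewhere. Second, the $L^p$-energy method (Lemma~\ref{lem1}) extends verbatim to the weak setting because its proof uses only coercivity of $D_i$; thus for every even $p\geq 2$,
\begin{equation*}
\frac{d}{dt}\EE_p[u^N] + \alpha_p\sumi \intO \left|\nabla (u^N_i)^{p/2}\right|^2 dx \leq C\bra{1+\sumi \intO (u^N_i)^{p-1+r}dx}.
\end{equation*}
Third, the right-hand side must be absorbed into the dissipation through a Gagliardo-Nirenberg interpolation in $\R^n$.

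With $v=(u^N_i)^{p/2}$, the scaling of $\|v\|_{L^{2(p-1+r)/p}}^{2(p-1+r)/p}$ against $\|\nabla v\|_{\LO{2}}^2$ becomes critical exactly at $r=1+2/n$. For $r<1+2/n$ the standard Gagliardo-Nirenberg inequality combined with Young absorption delivers the desired control modulo a lower $\LO{s}$-norm, handled inductively starting from the $L\log L$ bound. In the borderline case $r=1+2/n$ I would invoke a logarithmic refinement of Gagliardo-Nirenberg interpolating between $H^1(\Omega)$ and the $L\log L$-norm, and the entropy bound is precisely what supplies the $L\log L$ control with a constant independent of $N$. Either way, Gronwall produces a uniform $L^\infty(0,T;\LO{p})$ bound for every finite $p$, and a Moser/Alikakos iteration upgrades these to $L^\infty(0,T;\LO{\infty})$. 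With this bound the truncation becomes inactive and a standard compactness argument lets one pass to the limit $N\to\infty$, yielding the desired bounded weak solution; non-negativity is inherited from the quasi-positivity in \eqref{A1}. Uniqueness follows from a Gronwall inequality on the difference of two bounded weak solutions using \eqref{A1} and the $\LO{\infty}$ bound. The uniform-in-time boundedness under $k_2=k_3=0$ is obtained by running the same estimates on sliding intervals $[t,t+1]$: in this case the entropy becomes fully dissipative and yields a time-independent $L\log L$ bound, which then propagates to a time-independent $\LO{\infty}$ bound.

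The main obstacle I expect is establishing the correct logarithmic Gagliardo-Nirenberg interpolation at the critical exponent $r=1+2/n$ in general dimension $n$, together with rigorously justifying the $L^p$-energy manipulations—in particular the use of $(u^N_i)^{p-1}$ as a test function and the subsequent integration by parts—within the weak-solution framework where $D_i$ is merely bounded and measurable. A mollification of $D_i$ at the approximation stage will likely be needed to legitimize the chain rule and integration by parts before the smoothing is removed by compactness.
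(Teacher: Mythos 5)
Your proposal follows essentially the same route as the paper: truncate the nonlinearities, derive the entropy-based $L\log L$ bound, run the $L^p$-energy method of Lemma \ref{lem1}, absorb the critical case $r=1+\frac 2n$ via a logarithmic Gagliardo--Nirenberg inequality (exactly the paper's Lemma \ref{MGN}, proved there in all dimensions, so the ``main obstacle'' you anticipate is already supplied), bootstrap to $L^\infty$, and pass to the limit by Aubin--Lions compactness. The only cosmetic differences are the form of the truncation (the paper uses $f_i^{\eps}=f_i/(1+\eps\sum_j|f_j|)$, which manifestly preserves quasi-positivity, \eqref{A4} and \eqref{E}, whereas a bare cut-off at height $N$ would need a little care on these points) and that the integration-by-parts issue for merely bounded measurable $D_i$ is settled by citing \cite{fitzgibbon2021reaction} rather than by mollifying the coefficients.
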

\begin{remark}\hfill
	\begin{itemize}
		\item \blue{Obviously, Theorem \ref{th1.3} is a special case of Theorem \ref{th1.4} in dimension $n = 1$ when all diffusion coefficients are constants.}
		\item Theorem \ref{th1.4} improves results of \cite{fitzgibbon2021reaction} by allowing the critical value $r = 1 + \frac 2n$.
		\item In case of constant diffusion, Theorem \ref{th1.4} recovers results in \cite{morgan2020boundedness} in two dimensions concerning quadratic intermediate sum condition \blue{at the cost of \eqref{E}}. In higher dimensions, still with constant diffusion coefficients, results in \cite{morgan2020boundedness} allow intermediate sum \blue{conditions} of order \blue{$r<r_* \leq 1 + \frac{4}{n+2}$}, which is obviously better than $1+\frac 2n$ as soon as \blue{$n\leq 3$}. \blue{However one} novelty of Theorem \ref{th1.4} is that it deals with discontinuous diffusion coefficients, which are out of reach for the duality method used in \cite{morgan2020boundedness}.
	\end{itemize}
\end{remark}

\begin{example}\label{example}
	Consider the reversible reaction for three chemical species $\mathcal{U}, \mathcal{V}, \mathcal{W}$ as \blue{given by}
	\begin{equation*}
		\alpha \mathcal U + \beta \mathcal V \leftrightharpoons \gamma W
	\end{equation*}
	with stoichiometric coefficients $\alpha, \beta, \gamma \in \mathbb N$, and, for the sake of simplicity, \blue{assume the reaction rate constants are one}. By applying the mass action law, one obtains the following one dimensional reaction-diffusion system with $\Omega = (0,L)$
	\begin{equation}\label{reversible}
	\begin{cases}
		\pa_t u - \pa_{x}(d_u(x,t)\pa_xu) = f_1(u,v,w):= -\alpha\bra{u^\alpha v^\beta - w^{\gamma}}, &x\in\Omega,\\
		\pa_t v- \pa_{x}(d_v(x,t)\pa_xv) = f_2(u,v,w):= -\beta\bra{u^\alpha v^\beta - w^\gamma}, &x\in\Omega,\\
		\pa_t w - \pa_{x}(d_w(x,t)\pa_xw) = f_3(u,v,w):= \gamma\bra{u^\alpha v^\beta - w^\gamma}, &x\in\Omega,
	\end{cases}
	\end{equation}
	subject to homogeneous Neumann boundary \blue{conditions} and non-negative, bounded initial data. It is easy to see that if $\gamma = 3$ and $\alpha, \beta \in \mathbb N$ arbitrary, the cubic intermediate sum condition is satisfied for \eqref{reversible}. Moreover, thanks to the reversibility, the entropy inequality condition \eqref{E} is also fulfilled since
	\begin{equation*}
		f_1\times \log u + f_2 \times \log v + f_3\times \log w = -\bra{u^\alpha v^\beta - w^\gamma}\bra{\log\bra{u^\alpha v^\beta} - \log \bra{w^\gamma}} \leq 0.
	\end{equation*}
	Therefore, one can apply Theorem \ref{th1.4} to obtain global existence and uniform-in-time \blue{bounds} of a unique weak solution to \eqref{reversible}. When the diffusion coefficients $d_u, d_v, d_w$ are smooth or constants, the solution is classical. We emphasize that global classical solutions to \blue{systems} of type \eqref{reversible} have been studied many times before, see e.g. \cite{laamri2011global,fellner2016exponential,pierre2017asymptotic,morgan2020boundedness}, but none of this results \blue{are applicable to the case $\gamma=3$ and $\alpha, \beta \in \mathbb N$ arbitrary, even with constant diffusion coefficients, unless $\alpha=\beta=1$ and $\gamma$ is arbitrary.}
\end{example}

\noindent\textbf{Organization of the paper.} In the next section, we show the global existence and boundedness of solutions for systems with cubic nonlinearities in one dimension. We prove Theorem \ref{th1.2} in section \ref{sub2.1}. The proof of Theorem \ref{th1.1}, as a consequence of Theorem \ref{th1.2}, is presented in section \ref{sub2.2}. Section \ref{sec3} is devoted to case of nonlinearities satisfying \blue{a cubic intermediate sum condition}, where systems with constant diffusion and discontinuous coefficients are considered in subsection \ref{sub3.1} and \ref{sub3.2} respectively. 

\noindent\textbf{Notation.} In this paper we will use the following notation, some of which will be \blue{recalled from time to time}:
\begin{itemize}
	\item For $T>0$ and $p\in [1,\infty]$, $Q_T:= \Omega\times(0,T)$ and
	\begin{equation*}
		L^p(Q_T):= L^p(0,T;L^p(\Omega))
	\end{equation*}
	equipped with the usual norm
	\begin{equation*}
		\|f\|_{\LQ{p}}:= \bra{\int_0^T\intO |f|^pdxdt}^{1/p}
	\end{equation*}
	for $1\leq p < \infty$ and
	\begin{equation*}
		\|f\|_{\LQ{\infty}}:= \underset{(x,t)\in Q_T}{\text{\normalfont ess sup}}|f(x,t)|.
	\end{equation*}
	\item For  $p\in [1,\infty]$, $\tau \geq 0$ and $\delta>0$, we denote by
	\blue{\begin{equation*}
		Q_{\tau,\tau+\delta}:= \Omega\times(\tau,\tau+\delta),
	\end{equation*}}
	and
	\begin{equation*}
		L^p(Q_{\tau,\tau+\delta}):= L^p(\tau,\tau+\delta; L^p(\Omega)).
	\end{equation*}
\end{itemize}

\section{Proof of Theorem \ref{th1.2}}\label{sub2.1}

\subsection{Preliminaries}
We start with the definition of classical solutions.

Considering \eqref{eq1.1} in arbitrary dimension, i.e. $\Omega \subset \R^n$, $n\geq 1$, be a bounded domain. The reaction-diffusion system for vector of concentrations $u=(u_1,\cdots,u_m): \Omega\times (0,T) \to \R^m$, $m\geq 1$, given by
\begin{equation}\label{eq}
  \begin{cases}
    \blue{\partial_{t}u_i-d_i\Delta u_i=f_i(x,t,u)}, &(x,t)\in \Omega\times(0,T),\\
    \nabla_xu_i\cdot \nu=0, &(x,t)\in \partial\Omega\times(0,T),\\
    u_i(x,0)=u_{i,0}(x), &x\in\Omega,
  \end{cases}
\end{equation}
where  $d_i>0$ are diffusion coefficients.
\begin{definition}[Classical solutions]
	Let $0<T\leq\infty$. A classical solution to \eqref{eq} on (0,T) is a vector of concentrations $u=(u_1,\cdots,u_m),m\geq1$, satisfying for all $i=1,\cdots,m$, $u_i\in C([0,T];L^p(\Omega))\cap L^\infty((0,T)\times\Omega)\cap C^{1,2}((\tau,T)\times\bar{\Omega})$ for all $p>1$ and all $0<\tau<T$, and $u$ satisfies each equation in \eqref{eq} pointwise in $Q_T$.
\end{definition}
\begin{theorem}[Local existence, \cite{fellner2020global}, Proposition 3.1]\label{local}
	Assume \eqref{A1}. Then, for any bounded, nonnegative initial data, \eqref{eq} possesses a local nonnegative classical solution on a maximal interval $[0,T_{\max})$. Moreover, if
	\begin{equation}\label{eq-blow-up}
	\begin{split}
	\limsup_{t\rightarrow T^-_{\max}}\|u_i(t)\|_{L^\infty(\Omega)}<\infty \quad \text{for all}~i=1,2,\cdots,m,
	\end{split}
	\end{equation}
	then $T_{\max}=+\infty.$
\end{theorem}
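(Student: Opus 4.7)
The strategy is the standard semigroup/fixed-point route for semilinear parabolic systems, with the characteristic twist that quasi-positivity has to be respected in order to get a genuinely nonnegative solution. First I would fix $i$ and denote by $S_i(t)$ the analytic semigroup on $C(\overline\Omega)$ generated by $d_i\partial_{xx}$ with homogeneous Neumann boundary condition; each $S_i(t)$ is positivity-preserving and a contraction on $L^\infty(\Omega)$. Because $f_i$ is, by \eqref{A1}, only \emph{locally} Lipschitz in $u$, I truncate: fix $R > 2\max_i\|u_{i,0}\|_{L^\infty(\Omega)}$ and define $\widetilde f_i(x,t,u) = f_i(x,t,\pi_R(u))$, where $\pi_R$ is the radial retraction onto the ball $\{|u|\leq R\}\cap \R_+^m$ in $\R^m$. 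Then $\widetilde f_i$ is globally Lipschitz in $u$ uniformly in $(x,t)$, and the map
\begin{equation*}
  \Phi[u]_i(t) := S_i(t)u_{i,0} + \int_0^t S_i(t-s)\widetilde f_i(\cdot,s,u(s))\,ds
\end{equation*}
is a contraction on the closed ball of radius $R$ in $C([0,T];L^\infty(\Omega))^m$ provided $T$ is small enough, which produces a unique mild solution on a short interval.

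Next I would establish non-negativity using \eqref{A1}. The cleanest way is to consider the system with nonlinearities $\widehat f_i(x,t,u) := f_i(x,t,u^+)$, where $u^+ = (u_1^+,\dots,u_m^+)$, and argue that a mild solution $u$ of the truncated/projected system satisfies $u_i \geq 0$. Indeed, testing the equation for $u_i$ against $-(u_i)^-$ (or, at the mild level, using that $S_i(t)$ is positivity-preserving together with $\widehat f_i(x,t,u) \geq 0$ whenever $u_i = 0$ by quasi-positivity) gives $\tfrac{d}{dt}\tfrac12\|u_i^-\|_{L^2}^2 \leq C\|u_i^-\|_{L^2}^2$ and hence $u_i^-\equiv 0$. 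Because $u_i\geq 0$, the values $\pi_R(u)$ coincide with $u$ as long as $\|u_i\|_{L^\infty}\leq R$, so on the short interval the solution of the truncated system solves the original system, and by uniqueness is nonnegative.

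Having a nonnegative mild solution, parabolic regularity upgrades it to a classical one: since $u\in L^\infty(Q_T)^m$, the forcing $f_i(\cdot,\cdot,u)$ lies in $L^\infty(Q_T)$; applying $L^p$ parabolic estimates on each equation gives $u_i\in W^{2,1}_p(Q_{\tau,T})$ for any $0<\tau<T$ and any $p<\infty$, hence by Sobolev embedding $u_i$ is H\"older continuous on $\overline{Q_{\tau,T}}$; then the local Lipschitz hypothesis makes $f_i(\cdot,\cdot,u)$ H\"older in $(x,t)$, and Schauder theory yields $u_i\in C^{1,2}((\tau,T)\times\overline\Omega)$. A standard continuation argument extends the solution to a maximal interval $[0,T_{\max})$ on which $u$ is the unique classical nonnegative solution.

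Finally, the blow-up criterion \eqref{eq-blow-up} is obtained by contradiction. Suppose $T_{\max}<\infty$ and $\limsup_{t\to T_{\max}^-}\|u_i(t)\|_{L^\infty(\Omega)}<\infty$ for every $i$; then $u$ is uniformly bounded on $Q_{T_{\max}}$ and, by the regularity argument above, uniformly H\"older on $[T_{\max}/2,T_{\max})\times\overline\Omega$, so $u(\cdot,T_{\max})\in C(\overline\Omega)^m$ is well defined and nonnegative. Restarting the local existence step with this new initial datum yields a classical solution on $[T_{\max},T_{\max}+\delta)$ for some $\delta>0$, and pasting it to $u$ contradicts the maximality of $T_{\max}$. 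The main technical obstacle in writing this out carefully is the non-negativity step: one must make sure that the truncation used to obtain a global-in-$u$ Lipschitz nonlinearity is compatible with \eqref{A1}, which is why working with $u^+$ componentwise rather than with an abstract projection is preferable, and it is precisely here that the quasi-positivity hypothesis is indispensable.
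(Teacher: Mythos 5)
The paper does not prove Theorem \ref{local} at all; it is imported verbatim from \cite{fellner2020global} (Proposition 3.1), and your argument --- contraction mapping for a truncated nonlinearity, non-negativity via quasi-positivity applied to $f_i(x,t,u^+)$, parabolic bootstrap to classical regularity, and continuation for the blow-up criterion --- is the standard proof and is essentially the route taken in that reference. The sketch is correct; the only points a full write-up must handle with care are (i) running the fixed point for the single nonlinearity $f_i(x,t,\pi_R(u^+))$ so that the truncation and the positive-part modification are removed simultaneously once $0\le u_i\le R$ is known, and (ii) the Schauder step, which tacitly uses continuity of $f_i$ in $(x,t)$ beyond what \eqref{A1} literally states (though the paper's notion of classical solution, requiring the PDE only a.e., is already met at the $W^{2,1}_p$ stage).
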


Thanks to Theorem \ref{local}, the global existence of strong solutions to \eqref{eq1.1} follows if we can show that
\begin{equation}\label{criterion}
\sup_{i=1,\ldots, m}\sup_{T\in(0,T_{\max})}\|u_i\|_{\LQ{\infty}} < +\infty.
\end{equation}
Moreover, due to the smoothing effect, we can shift the initial time to $0<\tau<T_{\max}$ to assume w.l.o.g. that {\textit{the initial data $u_{i,0}\in C^2(\bar{\Omega})$, $i=1,\ldots, m$,}} and satisfy the compatibility condition $\nabla u_{i,0}\cdot \nu = 0$ on $\pa\Omega$. \textbf{We will use these regular initial data for the rest of this paper.}

\medskip
The following interpolation lemma was proved in \cite{fellner2020global} and it holds in {\it all dimensions.}
\begin{lemma}[Regularity Interpolation, Neumann boundary conditions]\cite{kanel1990solvability, fellner2020global}\label{lem2.3}
Let $\Omega\subset\R^n$, $n\geq 1$, be a bounded domain with smooth boundary $\pa\Omega$. For some constant $d>0$, let $u$ be the solution to the inhomogeneous linear heat equation
  \begin{equation*}
    \begin{cases}
      \partial_{t}u-d\Delta u=\phi(x,t), &(x,t)\in \Omega\times(0,T),\\
      \nabla u\cdot\nu=0,  &(x,t)\in \partial\Omega\times(0,T),\\
      u(x,0)=u_0(x), &x\in\Omega,
    \end{cases}
  \end{equation*}
  Assume that there exists $\gamma\in[0, 1)$ such that for all $x, x'\in\Omega$, and all $t\in(0, T)$,
  $$|u(x,t)-u(x',t)|\leq H|x-x'|^\gamma.$$
  Then, the following gradient estimate follows:
  $$\sup_{Q_T}|\nabla u(x,t)|\leq C\|u_0\|_{C^1(\Omega)}+BH^{\frac{1}{2-\gamma}}\|\phi\|_{\LQ{\infty}}^{\frac{1-\gamma}{2-\gamma}},$$
  where $B>0$ and $C>0$ are constants depending only on $\Omega, d$ and $\gamma$.

\end{lemma}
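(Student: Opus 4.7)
The plan is to use Duhamel's representation via the Neumann heat semigroup $\{S(t)\}_{t\ge 0}$, exploit the mean-zero property of the gradient of the heat kernel to turn the H\"older hypothesis into quantitative gain, and then optimize a free parameter $\varepsilon$. Denote by $G^N(x,y,t)$ the Neumann heat kernel for $\partial_t - d\Delta$ on $\Omega$. Fix an arbitrary point $(x_0,t_0)\in Q_T$ and $\varepsilon\in (0,t_0]$ (to be chosen below), and apply Duhamel's formula between $t_0-\varepsilon$ and $t_0$:
\begin{equation*}
u(x_0,t_0) \;=\; \int_\Omega G^N(x_0,y,\varepsilon)\,u(y,t_0-\varepsilon)\,dy \;+\; \int_{t_0-\varepsilon}^{t_0}\!\!\int_\Omega G^N(x_0,y,t_0-s)\,\phi(y,s)\,dy\,ds.
\end{equation*}
Differentiating in $x_0$ and using the key mean-zero identity $\int_\Omega \nabla_x G^N(x_0,y,\varepsilon)\,dy=0$ (a consequence of conservation $\int_\Omega G^N(x_0,y,\varepsilon)\,dy\equiv 1$), I subtract the constant $u(x_0,t_0-\varepsilon)$ from the first integrand, so the H\"older hypothesis $|u(y,t_0-\varepsilon)-u(x_0,t_0-\varepsilon)|\leq H|y-x_0|^\gamma$ applies directly inside the integral.

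For the kernel side I would invoke the standard short-time Gaussian bound on a smooth bounded domain with Neumann conditions,
\begin{equation*}
|\nabla_x G^N(x_0,y,s)| \;\leq\; C\,s^{-(n+1)/2}\exp\!\bigl(-|x_0-y|^2/(Cds)\bigr),
\end{equation*}
and derive, via a Gaussian change of variables, the two moment estimates
\begin{equation*}
\int_\Omega |\nabla_x G^N(x_0,y,\varepsilon)|\,|y-x_0|^\gamma\,dy \;\leq\; C\varepsilon^{(\gamma-1)/2}, \qquad \int_\Omega |\nabla_x G^N(x_0,y,s)|\,dy \;\leq\; Cs^{-1/2}.
\end{equation*}
Combining the first bound with the H\"older hypothesis controls the semigroup piece by $CH\varepsilon^{(\gamma-1)/2}$, while the second bound together with $\|\phi\|_{\LQ{\infty}}$ controls the forcing piece by $C\|\phi\|_{\LQ{\infty}}\sqrt{\varepsilon}$. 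Thus
\begin{equation*}
|\nabla u(x_0,t_0)| \;\leq\; CH\varepsilon^{(\gamma-1)/2} + C\|\phi\|_{\LQ{\infty}}\sqrt{\varepsilon}.
\end{equation*}
Balancing the two terms gives the optimal choice $\varepsilon^{\ast}\sim (H/\|\phi\|_{\LQ{\infty}})^{2/(2-\gamma)}$, which yields exactly the advertised bound $BH^{1/(2-\gamma)}\|\phi\|_{\LQ{\infty}}^{(1-\gamma)/(2-\gamma)}$.

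The additive $C\|u_0\|_{C^1(\Omega)}$ term arises in the boundary regime $\varepsilon^{\ast}>t_0$, where I must take $\varepsilon=t_0$ and the first Duhamel piece becomes $(S(t_0)u_0)(x_0)$. Reapplying the mean-zero identity with $u_0\in C^1(\bar\Omega)$ and $|u_0(y)-u_0(x_0)|\leq \|u_0\|_{C^1(\Omega)}|y-x_0|$ produces $|\nabla(S(t_0)u_0)(x_0)|\leq \|u_0\|_{C^1(\Omega)}\int_\Omega |\nabla_x G^N(x_0,y,t_0)|\,|y-x_0|\,dy \leq C\|u_0\|_{C^1(\Omega)}$ uniformly in $t_0$, while the forcing contribution in this regime is bounded by $C\|\phi\|_{\LQ{\infty}}\sqrt{t_0}\leq C\|\phi\|_{\LQ{\infty}}\sqrt{\varepsilon^\ast}$ and is absorbed into the main term.

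The principal obstacle I expect is establishing the Gaussian gradient estimate $|\nabla_x G^N(x_0,y,s)|\lesssim s^{-(n+1)/2}e^{-c|x_0-y|^2/s}$ uniformly over $\Omega$, particularly when $x_0$ or $y$ is close to $\partial\Omega$; this is handled by Davies-type semigroup estimates, a local reflection argument, or a parametrix construction on the smooth domain, and it is the one step whose constants depend non-trivially on $\Omega$. Once this kernel bound is available, the rest is a short interpolation-and-optimization argument, and the mean-zero subtraction is essential: without it, $\int_\Omega |\nabla_x G^N|\,dy\sim\varepsilon^{-1/2}$ would force the estimate to depend on $\|u\|_{\LQ{\infty}}$ instead of $H$, defeating the purpose of the interpolation.
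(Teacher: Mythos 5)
Your argument is correct and is essentially the proof given in the cited reference \cite{fellner2020global} (the paper itself only quotes this lemma): Duhamel representation with the Neumann Green function, the mean-zero identity $\int_\Omega \nabla_x G^N(x_0,y,\varepsilon)\,dy=0$ to convert the H\"older hypothesis into the gain $\varepsilon^{(\gamma-1)/2}$, Gaussian gradient bounds on the kernel, and optimization in $\varepsilon$ with the $\|u_0\|_{C^1(\Omega)}$ term absorbing the regime $\varepsilon^\ast>t_0$. The paper's own remark after Lemma \ref{lem-Diri} confirms that this is precisely the mechanism of the referenced proof (gradient estimates of the Neumann Green function), so there is nothing to add.
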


We have a similar lemma for the case of homogeneous Dirichlet boundary conditions.
\begin{lemma}[Regularity Interpolation, Dirichlet boundary conditions]\label{lem-Diri} Let $\Omega\subset\R^n$, $n\geq 1$, be a bounded domain with smooth boundary $\pa\Omega$. Let $0\leq \tau < T$. For some constant $d>0$, let $u$ be the solution to the inhomogeneous linear heat equation
  \begin{equation*}
    \begin{cases}
      \partial_{t}u-d\Delta u=\phi(x,t), &(x,t)\in Q_{\tau,T},\\
      u(x,t)=0,  &(x,t)\in \partial\Omega\times(\tau,T),\\
      u(x,\tau)=u_0(x),  & x\in\Omega.
    \end{cases}
  \end{equation*}
 \blue{ Assume there exists} $\gamma\in[0, 1)$ such that for all $x, x'\in\Omega$, and all $t\in(\tau, T)$,
  $$|u(x,t)-u(x',t)|\leq H|x-x'|^\gamma.$$
  Then, the following uniform gradient estimate follows:
  \begin{equation}
  \label{2star}
   \sup_{Q_{\tau,T}}|\nabla u(x,t)|\leq C\|u_0\|_{C^1(\Omega)}+C_{T-\tau}H^{\frac{1-\delta}{2-\gamma-\delta}}\|\phi\|_{L^{\infty}(Q_{\tau,T})}^{\frac{1-\gamma}{2-\gamma-\delta}},
    \end{equation}
  where \blue{$C$ and $C_{T-\tau}$} are constants depending only on $T-\tau>0$, \blue{$\Omega, d$, $\gamma$ and $\delta\in(0,2-\gamma)$}.
\end{lemma}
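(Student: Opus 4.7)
The plan is to adapt the interpolation argument used for the Neumann case in Lemma \ref{lem2.3} to the Dirichlet setting, paying extra care to boundary behavior, which is the source of the extra parameter $\delta$. Writing $u = u^{(1)} + u^{(2)}$ where $u^{(1)}$ solves the homogeneous Dirichlet heat equation with initial datum $u_0$ and $u^{(2)}$ carries the source $\phi$ with zero initial and boundary data, the first term is handled by standard smoothing of the Dirichlet semigroup, giving $\|\nabla u^{(1)}\|_{\LQtauT{\infty}} \leq C\|u_0\|_{C^1(\Omega)}$ (with a $T-\tau$ dependence absorbed into $C_{T-\tau}$). Thus the crux is estimating $\nabla u^{(2)}$ pointwise, combining the Hölder hypothesis on $u$ with the $L^\infty$ bound on $\phi$.

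For a point $(x_0,t_0) \in Q_{\tau,T}$ and a scale $\rho>0$ I would apply an interior-type parabolic gradient estimate on the cylinder $B_\rho(x_0)\cap\Omega \times (t_0-\rho^2,t_0)$, for instance in the form
\begin{equation*}
|\nabla u^{(2)}(x_0,t_0)| \;\lesssim\; \frac{1}{\rho}\sup_{B_\rho(x_0)\cap \Omega,\ (t_0-\rho^2,t_0)} |u(x,t)-u(x_0,t_0)| \;+\; \rho\,\|\phi\|_{L^\infty(Q_{\tau,T})}.
\end{equation*}
The Hölder hypothesis bounds the oscillation term by $H\rho^\gamma$, and optimizing in $\rho$ would reproduce the Neumann exponents $H^{1/(2-\gamma)}\|\phi\|^{(1-\gamma)/(2-\gamma)}$ precisely when $x_0$ is in the interior so that $B_\rho(x_0)\subset\Omega$ for the chosen $\rho$.

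The substantive difficulty, and the origin of $\delta$, is that when $x_0$ lies within distance $\rho$ of $\partial\Omega$ the interior estimate fails, and one must exploit the Dirichlet boundary condition $u|_{\partial\Omega}=0$. The plan is to flatten the boundary locally, extend $u$ by odd reflection, and replay the argument; the extended field remains Hölder with exponent $\gamma$ but satisfies a perturbed equation with a nontrivial source generated by the reflection, which is controllable only on a smaller scale $\rho^{1+\delta/(2-\gamma)}$. Accepting this boundary-layer loss with a free parameter $\delta\in(0,2-\gamma)$, the analogue of the split estimate becomes
\begin{equation*}
|\nabla u^{(2)}(x_0,t_0)| \;\lesssim\; H\rho^{\gamma-1+\delta} \;+\; \rho\,\|\phi\|_{L^\infty(Q_{\tau,T})},
\end{equation*}
and optimizing in $\rho$ yields exactly the exponents $H^{(1-\delta)/(2-\gamma-\delta)}\|\phi\|^{(1-\gamma)/(2-\gamma-\delta)}$ appearing in \eqref{2star}.

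The main obstacle I expect is making the boundary-layer step quantitative and uniform: verifying that the Hölder norm of the reflected field is comparable to $H$ (with constants depending only on $\Omega$ and $\gamma$), and that the perturbation introduced by reflecting across a curved boundary is absorbed by the $\rho\,\|\phi\|_\infty$ contribution up to a factor of $\rho^{-\delta}$. Once this is in place, the two cases (interior and boundary) are patched by taking the supremum over $x_0$ and choosing the same $\rho$ throughout, which gives a bound of the stated form with a constant $C_{T-\tau}$ that encodes the $(T-\tau)$-dependence inherited from the Dirichlet semigroup smoothing of $u_0$.
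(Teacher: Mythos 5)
Your decomposition $u=u^{(1)}+u^{(2)}$ and the semigroup bound for the initial-data part coincide with the paper's first step, but from there the paper takes a genuinely different and much softer route for the source part: it applies $L^p$ maximal regularity to get $\|w\|_{W^{1,2}_p(Q_{\tau,T})}\le C_p\|\phi\|_{L^p(Q_{\tau,T})}$, uses the trace embedding into $W^{2(1-\frac1p),p}(\Omega)$ and then the Sobolev embedding $W^{2(1-\frac1p),p}(\Omega)\hookrightarrow W^{2-\delta,\infty}(\Omega)$ with $\delta=\frac{n+2}{p}$, and finally interpolates $\|w\|_{C^1(\Omega)}\le C\|w\|_{C^{\gamma}(\Omega)}^{\theta}\|w\|_{W^{2-\delta,\infty}(\Omega)}^{1-\theta}$ with $\theta=\frac{1-\delta}{2-\gamma-\delta}$. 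In particular, $\delta$ has nothing to do with the boundary: it is the loss from working with finite $p$ in maximal regularity. The remark following the lemma says explicitly that the scaling/Green-function route you propose would, if carried out with Dirichlet Green-function gradient estimates, give \eqref{2star} with $\delta=0$ and a $T$-independent constant; so attributing $\delta$ to a defect of odd reflection across a curved boundary is a misdiagnosis of where the parameter comes from.

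More importantly, your sketch does not close as written. The boundary-layer inequality $|\nabla u^{(2)}(x_0,t_0)|\lesssim H\rho^{\gamma-1+\delta}+\rho\|\phi\|_{L^\infty(Q_{\tau,T})}$ is asserted rather than proved (``controllable only on a smaller scale $\rho^{1+\delta/(2-\gamma)}$'' is not a quantitative statement about the reflected field), and even if one grants it, balancing the two terms gives $\rho=(H/\|\phi\|_\infty)^{1/(2-\gamma-\delta)}$ and hence the bound $H^{\frac{1}{2-\gamma-\delta}}\|\phi\|_\infty^{\frac{1-\gamma-\delta}{2-\gamma-\delta}}$, \emph{not} the exponents $H^{\frac{1-\delta}{2-\gamma-\delta}}\|\phi\|_\infty^{\frac{1-\gamma}{2-\gamma-\delta}}$ of \eqref{2star}; the two pairs agree only when $\delta(2-\gamma-\delta)=0$, which is excluded. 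To recover the stated exponents from a two-term optimization the loss would have to sit on the source term, i.e.\ $H\rho^{\gamma-1}+\rho^{1-\delta}\|\phi\|_\infty$, and your reflection heuristic offers no support for that form. The viable repairs are either to carry out the reflection/Green-function argument carefully and prove the stronger $\delta=0$ version (as in the Neumann case of \cite{fellner2020global}), or to switch to the paper's maximal-regularity-plus-interpolation argument, which delivers \eqref{2star} directly.
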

\begin{remark}
	Using similar method as in \cite{fellner2020global} it is possible to show \eqref{2star} for $\delta = 0$ and the constant $C_T$ on the right hand is independent of $T$. The only modification is that one needs gradient estimates of the \blue{Green function} with homogeneous Dirichlet, instead of Neumann, boundary conditions. Here we present a simpler proof for \eqref{2star} using maximal-regularity \blue{of the parabolic equation}. Note that \eqref{2star} is enough for our later purpose by choosing $\delta \leq \gamma/3$.
\end{remark}

\begin{proof}
   Let $u=v+w$, where
   \begin{equation}\label{eq_v}
	\begin{cases}
		\partial_{t}v-d\Delta v=0, &(x,t)\in Q_{\tau,T},\\
      v(x,t)=0,  &(x,t)\in \partial\Omega\times(\tau,T),\\
      v(x,\tau)=u_0(x),  & x\in\Omega,
	\end{cases}
	\end{equation}
   and
   \begin{equation}\label{eq_w}
	\begin{cases}
	  \partial_{t}w-d\Delta w=\phi(x,t), &(x,t)\in Q_{\tau,T},\\
      w(x,t)=0,  &(x,t)\in \partial\Omega\times(\tau,T),\\
      w(x,\tau)=0,  & x\in\Omega.
	\end{cases}
	\end{equation}
   From equation \eqref{eq_v}, we can obtain
    \begin{equation}\label{eq_C^1}
	\|v(t)\|_{C^1(\Omega)}\leq C\|u_0\|_{C^1(\Omega)}.
	\end{equation}
   From equation \eqref{eq_w}, using $L^p$-max-regularity \cite{lamberton1987equations}, we have
   \begin{equation}
	\|w\|_{W^{1,2}_p(Q_{\tau,T})}\leq C_p\|\phi\|_{L^p(Q_{\tau,T})},\quad 1<p<\infty
	\end{equation}
	where the space $W^{1,2}_p(Q_{\tau,T})$ is defined as
	\begin{equation*}
		W^{1,2}_p(Q_{\tau,T}):= \left\{f\in L^{p}(Q_{\tau,T}):\, \|f\|_{W^{1,2}_p(Q_{\tau,T})}:= \sum_{2r+s\leq 2}\|\pa_t^r\pa_x^sf\|_{L^{p}(Q_{\tau,T})} < +\infty\right\}.
	\end{equation*}
   We have the embedding (see \cite[Lemma II.3.4]{Ladyzenskaja1968Linear}), for all $t\in (\tau,T)$,
      \begin{equation}
	\|w(t)\|_{W^{2(1-\frac 1p),p}(\Omega)}\leq C_{p,T}\|w\|_{W^{1,2}_p(Q_{\tau,T})},\quad 1<p<\infty.
	\end{equation}
   Moreover, by choosing $p> \frac{n+2}{2-\gamma} > \frac{n+2}{2}$ we have
    \begin{equation}
	W^{2(1-\frac 1p),p}(\Omega)\hookrightarrow W^{2-\delta,\infty}(\Omega),
	\end{equation}
   where
    \blue{\begin{equation}
	\frac{1}{\infty}<\frac{1}{p}-\frac{2(1-\frac{1}{p})-2+\delta}{n} \quad \Leftrightarrow \quad \delta < \frac{n+2}{p} < 2 - \gamma.
	\end{equation}}
    Using interpolation, we have
     \begin{equation}
     \begin{split}
      \|w\|_{C^1(\Omega)}&\leq C\|w\|_{C^{\gamma}(\Omega)}^\theta\cdot\|w\|_{W^{2-\delta,\infty}(\Omega)}^{1-\theta}\\
      &\leq C_p\|w\|_{C^{\gamma}(\Omega)}^\theta\cdot\|\phi\|_{L^p(Q_{\tau,T})}^{1-\theta}\\
      &\leq C_{p,T-\tau}\|w\|_{C^{\gamma}(\Omega)}^\theta\cdot\|\phi\|_{L^\infty(Q_{\tau,T})}^{1-\theta},
     \end{split}
	 \end{equation}
     where $1=\gamma\theta+(2-\delta)(1-\theta)$, $\theta=\frac{1-\delta}{2-\delta-\gamma}$ and $0<\delta<2-\gamma.$
    Thus
     \begin{equation}
     \begin{split}
      \|u\|_{C^1(\Omega)}&\leq C\|u_0\|_{C^1(\Omega)}+C_{T-\tau}H^{\frac{1-\delta}{2-\gamma-\delta}} \|\phi\|_{L^{\infty}(Q_{\tau,T})}^{\frac{1-\gamma}{2-\gamma-\delta}}.
     \end{split}
	 \end{equation}
\end{proof}
The following results are specifically designed for the case of one dimension.
\begin{lemma}\label{lem-diff}
  Let $\Omega = (0,L)$ for $L>0$, $0\leq \tau < T$. Let $f\in L^{\infty}(Q_{\tau,T})$ and $a:Q_{\tau,T}\to \R$ such that
	\blue{\begin{equation}\label{eq-a}
		0<\alpha_1 \leq a(x,t) \leq \alpha_1^{-1} \quad \forall (x,t)\in Q_{\tau,T},
	\end{equation}}
	\blue{for some $\alpha_1>0$}. Let $u$ be the solution of the parabolic equation
	\begin{equation}\label{eq-diff}
	\begin{cases}
		\frac{1}{a(x,t)}\pa_t u - \pa_{xx} u = f, & (x,t)\in Q_{\tau,T},\\
		\pa_xu(0,t) = \pa_xu(L,t) = 0, &t\in (\tau,T),\\
		u(x,\tau) = 0, &x\in \Omega.
	\end{cases}
	\end{equation}
	If $u\in L^{\infty}(Q_{\tau,T})$, $u\geq 0$, and $u_t\ge0$, then we have
	\begin{equation*}
		\|\pa_x u\|_{L^{\infty}(Q_{\tau,T})}\leq C,
	\end{equation*}
   where $C$ depends on  $T-\tau$, \blue{$\alpha_1$}, $\|f\|_{L^{\infty}(Q_{\tau,T})}$ and $\|u\|_{\LQtauT{\infty}}$.
\end{lemma}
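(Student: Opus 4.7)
The plan is to exploit the sign condition $\partial_t u \geq 0$ together with the positivity of $1/a$ to obtain a one-sided bound on $\partial_{xx} u$, and then use the Neumann boundary conditions to convert this into a uniform Lipschitz estimate in the space variable. Crucially, this approach sidesteps the fact that $a$ may be discontinuous, since only the sign of $1/a$ is used.

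First I would rewrite \eqref{eq-diff} pointwise as
$$
\partial_{xx} u(x,t) \;=\; \frac{1}{a(x,t)}\,\partial_t u(x,t) \;-\; f(x,t).
$$
Because $1/a(x,t) \ge \alpha > 0$ and $\partial_t u \ge 0$ by hypothesis, the first term on the right-hand side is nonnegative, so
$$
\partial_{xx} u(x,t) \;\ge\; -f(x,t) \;\ge\; -\|f\|_{L^{\infty}(Q_{\tau,T})}.
$$
This one-sided bound says that $u(\cdot,t)$ is \emph{semiconvex} in $x$. Concretely, defining
$$
w(x,t) \;:=\; u(x,t) + \tfrac{1}{2}\|f\|_{L^{\infty}(Q_{\tau,T})}\,x^2,
$$
we have $\partial_{xx} w \ge 0$, so $w(\cdot,t)$ is convex on $[0,L]$ for each $t$. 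Hence $\partial_x w(\cdot,t)$ is nondecreasing in $x$, while the Neumann condition $\partial_x u(0,t)=\partial_x u(L,t)=0$ translates to $\partial_x w(0,t)=0$ and $\partial_x w(L,t)=\|f\|_{L^{\infty}(Q_{\tau,T})} L$. Monotonicity then forces $0 \le \partial_x w(x,t) \le \|f\|_{L^{\infty}(Q_{\tau,T})} L$, which yields
$$
-\|f\|_{L^{\infty}(Q_{\tau,T})}\,x \;\le\; \partial_x u(x,t) \;\le\; \|f\|_{L^{\infty}(Q_{\tau,T})}\,(L-x),
$$
and therefore $\|\partial_x u\|_{L^{\infty}(Q_{\tau,T})} \le L\,\|f\|_{L^{\infty}(Q_{\tau,T})}$.

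The main obstacle is justifying the pointwise manipulation of the equation when $a$ is only assumed to be measurable with no continuity or differentiability. I would handle this by an approximation argument: regularize $a$ by mollification to obtain smooth $a_\varepsilon$ satisfying the same two-sided bounds \eqref{eq-a}, regularize $f$ to smooth $f_\varepsilon$, and solve the resulting classical problem for $u_\varepsilon$. The sign conditions $u_\varepsilon \ge 0$ and $\partial_t u_\varepsilon \ge 0$ propagate from the compatible (zero) initial and boundary data via the parabolic maximum principle applied to $u_\varepsilon$ and to $\partial_t u_\varepsilon$, respectively. The semiconvexity/Neumann argument above then produces the $\varepsilon$-independent bound $\|\partial_x u_\varepsilon\|_{L^{\infty}(Q_{\tau,T})} \le L\|f_\varepsilon\|_{L^{\infty}(Q_{\tau,T})}$. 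Passing to the limit $\varepsilon\to 0$ (using the $L^\infty$-bound on $u$ assumed in the hypothesis and standard compactness) preserves the estimate in the limit. Equivalently, one can phrase $\partial_{xx}(u + \tfrac12\|f\|_{\infty}x^2) \ge 0$ distributionally and appeal to the standard fact that a distribution with nonnegative second derivative in $x$ admits a convex representative in $x$ for a.e.\ $t$. The dependencies on $T-\tau$, $\alpha$ and $\|u\|_{L^\infty}$ listed in the statement are a conservative envelope absorbing the approximation step; the sharp bound only involves $L$ and $\|f\|_{L^\infty(Q_{\tau,T})}$.
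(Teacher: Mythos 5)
Your route is genuinely different from the paper's, and its core is both correct and considerably more elementary. The paper proves the lemma by even reflection of $u$ across the endpoints, a Caccioppoli-type estimate giving $\int|\partial_x \bar u|^2\le C_0$ (this is where the sign conditions $u\ge 0$, $u_t\ge 0$ are used there), and then the gradient-boundedness theorem of Ladyzhenskaya--Solonnikov--Uraltseva applied to the $x$-differentiated equation; the resulting constant depends on $T-\tau$, $\alpha$, $\|f\|_{L^\infty(Q_{\tau,T})}$ and $\|u\|_{L^\infty(Q_{\tau,T})}$. Your observation that $\partial_{xx}u=a^{-1}\partial_t u-f\ge -\|f\|_{L^\infty(Q_{\tau,T})}$, so that $u(\cdot,t)$ is semiconvex and the Neumann conditions trap $\partial_x u$ between $-\|f\|_{L^\infty}x$ and $\|f\|_{L^\infty}(L-x)$, bypasses all of that machinery and even yields the sharper bound $L\|f\|_{L^\infty(Q_{\tau,T})}$, independent of $\alpha$, $T-\tau$ and $\|u\|_{L^\infty}$. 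In the paper's actual application, where $u=v=\int_\tau^t\sum_i d_iu_i\,ds$ with the $u_i$ classical, $\partial_x v$ is continuous up to $x=0,L$ and $\partial_{xx}v=\sum_i u_i-G$ holds pointwise a.e., so your argument applies directly and cleanly there.

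Two points in your justification of the general statement, however, do not hold as written. First, the approximation step fails where you assert that $u_\varepsilon\ge 0$ and $\partial_t u_\varepsilon\ge 0$ ``propagate via the parabolic maximum principle'': for a general $f\in L^\infty(Q_{\tau,T})$ with no sign, the solution of the regularized problem with zero data need not be nonnegative, and $\partial_t u_\varepsilon(\cdot,\tau)=a_\varepsilon(\cdot,\tau)f_\varepsilon(\cdot,\tau)$ need not be nonnegative either. These sign conditions are \emph{hypotheses on the particular $u$} (inherited in the application from $v=\int\sum_i d_iu_i$ with $u_i\ge0$), not structural consequences of the PDE, and there is no reason they survive mollification of $a$ and $f$. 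Second, in the purely distributional version the Neumann condition is encoded only weakly, and a bounded convex function on $(0,L)$ can have one-sided derivatives blowing up at the endpoints (consider $-\sqrt{x}$), so monotonicity of $\partial_x w(\cdot,t)$ cannot be squeezed between the boundary values until one knows $\partial_x u$ attains them; this needs some a priori continuity of $\partial_x u$ up to $x=0,L$. Both issues disappear if you state the lemma at the regularity actually available in the application, namely $\partial_x u(\cdot,t)$ absolutely continuous on $[0,L]$ with the equation holding a.e.; at that level your proof is complete and preferable to the paper's.
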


\begin{proof}
We define $\bar{Q}_{\tau,T}=(-L,2L)\times (\tau,T)$, and $\bar u$, $\bar a$, $\bar f$: $\bar{Q}_{\tau,T} \to \R$ as follows
    \begin{equation}\label{eq-exten-u}
	\bar{u}(x,t)=\begin{cases}
		u(x,t),& x\in [0,L],\\
        u(2L-x,t),& x\in (L,2L],\\
        u(-x,t),& x\in [-L,0),
	\end{cases}
	\end{equation}
   \begin{equation}\label{eq-exten-a}
	\bar{a}=\begin{cases}
		a(x,t),& x\in [0,L],\\
        a(2L-x,t),& x\in (L,2L],\\
        a(-x,t),& x\in [-L,0)
	\end{cases}
	\end{equation}
and
    \begin{equation}\label{eq-exten-f}
	\bar{f}=\begin{cases}
		f(x,t),& x\in [0,L],\\
        f(2L-x,t),& x\in (L,2L],\\
        f(-x,t),& x\in [-L,0).
	\end{cases}
	\end{equation}
   Since $u$ solves \eqref{eq-diff}, \blue{it follows that $\bar{u}$ solves}
   \begin{equation}\label{eq-ext-w}
	\begin{cases}
		\frac{1}{\bar a}\pa_t \bar{u} - \pa_{xx} \bar{u} = \bar{f}, & (x,t)\in \bar{Q}_{\tau,T},\\
		\pa_x\bar{u}(-L,t) = \pa_x\bar{u}(2L,t) = 0, &t\in(\tau,T),\\
		\bar{u}(x,\tau) = 0, &x\in (-L,2L).
	\end{cases}
	\end{equation}
   Moreover,
   \begin{equation}\label{aa}
   \|\bar{f}\|_{L^{\infty}(\bar{Q}_{\tau,T})}=\|f\|_{L^{\infty}(Q_{\tau,T})}\quad \text{and }~ \|\bar{u}\|_{L^{\infty}(\bar{Q}_{\tau,T})}=\|u\|_{L^{\infty}(Q_{\tau,T})}.
   \end{equation}
\blue{Fix $\rho\in (0,L)$ and $x_0\in (0,L)$, and let} $\zeta=\zeta(|x-x_0|)$ with $\zeta\in C^{\infty}_0(\R)$ satisfying
\begin{equation*}\zeta(s)=
\begin{cases}
1,~&0\leq s\leq \frac{\rho}{2},\\
0,~&s\geq \rho>0,
\end{cases}
\end{equation*}
\blue{ $0\leq\zeta\leq 1$ and $|\zeta'|\leq \frac{1}{\rho}$.  By multiplying the pde in} \eqref{eq-ext-w} with $\bar{u}\zeta^2$ and integrating over $[x_0-\rho,x_0+\rho]\times(\tau,T)$, we obtain that
        \begin{equation*}
          \begin{split}
            \int^T_\tau\int^{x_0+\rho}_{x_0-\rho}&\frac{1}{\bar a}\bar{u}_t\bar{u}\zeta^2 dxdt+\int^T_\tau\int^{x_0+\rho}_{x_0-\rho}(\pa_{x}\bar{u})^2\zeta^2 dxdt\\
            &=\int^T_\tau\int^{x_0+\rho}_{x_0-\rho}-2 \bar{u}\pa_{x} \bar{u}\zeta\pa_x\zeta dxdt+\int^T_\tau\int^{x_0+\rho}_{x_0-\rho}\bar{f}\bar{u}\zeta^2 dxdt.
          \end{split}
	    \end{equation*}
	    The first term on the left hand side is non-negative due to the assumptions on $a$ and $u, u_t \geq 0$. By applying Cauchy's inequality $-2\bar{u}\pa_{x}\bar{u}\zeta\pa_x\zeta \leq \frac{1}{2}(\pa_{x}\bar{u})^2\zeta^2+2\bar{u}^2(\pa_x\zeta)^2$, we have
        \begin{equation}\label{eq-diff1}
          \begin{split}
            \int^T_\tau&\int^{x_0+\rho}_{x_0-\rho}(\pa_x\bar{u})^2\zeta^2 dxdt\\
            &\leq4\int^T_\tau\int^{x_0+\rho}_{x_0-\rho}\bar{u}^2(\pa_x\zeta)^2 dxdt+2\|\bar{f}\|_{L^{\infty}(\bar{Q}_T)}\|\bar{u}\|_{L^{\infty}(\bar{Q}_T)}
            \int^T_\tau\int^{x_0+\rho}_{x_0-\rho}\zeta^2dxdt\\
            &\leq\frac{8 (T-\tau)}{\rho}\|\bar{u}\|^2_{L^{\infty}(\bar{Q}_{\tau,T})}+4\rho T\|\bar{f}\|_{L^{\infty}(\bar{Q}_{\tau,T})}\|\bar{u}\|_{L^{\infty}(\bar{Q}_{\tau,T})}:=C_0,
          \end{split}
	    \end{equation}
        note  that, \blue{where \eqref{aa} implies} $C_0$ depends only on $\rho,~T-\tau,~\|f\|_{\LQtauT{\infty}}$ and $\|u\|_{\LQtauT{\infty}}$. Therefore
		\begin{equation}\label{eq-diff2}
          \begin{split}
            \int^T_\tau\int^{x_0+\frac{\rho}{2}}_{x_0-\frac{\rho}{2}}(\pa_x\bar{u})^2dxdt\leq C_0.
          \end{split}
	    \end{equation}

        \blue{Differenting equation \eqref{eq-ext-w} with respect to $x$ gives}
        \begin{equation*} 
	    \begin{cases}
		  \pa_t (\pa_x\bar{u}) - \pa_x(\bar{a}\pa_{xx} \bar{u}) =\pa_x(\bar{a}\bar{f}), & (x,t)\in \bar{Q}_{\tau,T},\\
		  \pa_x\bar{u}(-L,t) = \pa_x\bar{u}(2L,t) = 0, &t\in (\tau,T),\\
	 	   \pa_x\bar{u}(x,\tau) = 0, &x\in(-L,2L).
	   \end{cases}
	   \end{equation*}

        Let $\widehat{Q}_{\tau,T}=[x_0-\frac{\rho}{3},x_0+\frac{\rho}{3}]\times(\tau,T)$.  From \eqref{eq-diff2}, \eqref{eq-a} and the fact $\bar{a}\bar{f}\in L^{\infty}(\bar{Q}_{\tau,T})$,  by \cite[Chapter 3, Section 8, Theorem 8.1]{Ladyzenskaja1968Linear}, we have
        \begin{equation*}
          \begin{split}
            \|\pa_x \bar{u}\|_{L^{\infty}(\widehat{Q}_{\tau,T})}\leq C_1,
          \end{split}
	    \end{equation*}
         where $C_1$ depends on $C_0$ and \blue{the constant $\alpha_1$}, but is independent of $x_0$. Thus, since $x_0 \in (0,L)$ arbitrary and $\rho>0$, we can obtain
     \begin{equation*}
		\|\pa_x u\|_{\LQtauT{\infty}}\leq C_1.
	\end{equation*}
\end{proof}

\begin{lemma}[H\"older continuity in one dimension]\label{key_lem2}
	Let $\Omega = (0,L)$ for some $L>0$, and $0\leq \tau < T$. Assume that $f\in \LQtauT{\infty}$ and $a:Q_{\tau,T}\to \R$ such that
	\blue{\begin{equation*}
		0<\alpha_2 \leq a(x,t) \leq \alpha_2^{-1} \quad \forall (x,t)\in Q_{\tau,T},
	\end{equation*}}
	\blue{for some $\alpha_2>0$. Then}
	\begin{equation}\label{para}
	\begin{cases}
		\pa_t w - \pa_x(a\pa_x w ) = \pa_xf, & (x,t)\in Q_{\tau,T},\\
		w(0,t) = w(L,t) = 0, &t\in (\tau,T),\\
		w(x,\tau) = 0, &x\in \Omega
	\end{cases}
	\end{equation}
	has a unique weak solution. Moreover, if $w\in \LQtauT{\infty}$, then  w is H\"older continuous with an exponent $\delta\in(0,1)$, i.e.
	\begin{equation*}
		|w(x,t) - w(x',t')| \leq K_0\bra{|x - x'|^{\delta} + |t-t'|^{\delta/2}}, \quad \forall (x,t), (x',t') \in Q_{\tau,T},
	\end{equation*}
    where the constant $K_0$ depends on $\|w\|_{\LQtauT{\infty}},~\|f\|_{\LQtauT{\infty}}$ \blue{and $\alpha_2$}.
\end{lemma}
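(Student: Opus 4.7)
The statement has two parts: well-posedness and H\"older regularity. I plan to establish existence and uniqueness of the weak solution by a routine Galerkin plus energy argument, and then obtain the H\"older continuity by recognizing the equation as a divergence-form parabolic problem with bounded measurable principal coefficient and an $L^\infty$ source in divergence form, to which the classical De Giorgi-Nash-Moser theory applies.

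\textbf{Existence and uniqueness.} I would set up the weak formulation: look for $w\in L^2(\tau,T;H^1_0(\Omega))$ with $\pa_t w\in L^2(\tau,T;H^{-1}(\Omega))$ satisfying
\begin{equation*}
\int_\tau^T\!\langle\pa_t w,\varphi\rangle\,dt+\int_\tau^T\!\!\int_\Omega a\,\pa_x w\,\pa_x\varphi\,dx\,dt=-\int_\tau^T\!\!\int_\Omega f\,\pa_x\varphi\,dx\,dt
\end{equation*}
for every $\varphi\in L^2(\tau,T;H^1_0(\Omega))$. Existence then follows from a Galerkin approximation in the Dirichlet eigenbasis of $-\pa_{xx}$: testing against the approximating solution and using the ellipticity $a\ge\alpha$ and Cauchy-Schwarz on the right-hand side yields energy estimates in terms of $\|f\|_{\LQtauT{\infty}}$, from which weak compactness produces a solution in the limit. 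Uniqueness is immediate by testing the difference of two solutions against itself.

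\textbf{H\"older continuity.} This is the heart of the lemma. I would rewrite the equation in pure divergence form,
\begin{equation*}
\pa_t w-\pa_x\bigl(a\,\pa_x w+f\bigr)=0,
\end{equation*}
so that the flux $a\,\pa_x w+f$ consists of a bounded measurable coefficient $a$ acting on the gradient plus a bounded measurable datum $f$. The classical De Giorgi-Nash-Moser theory for parabolic equations in divergence form with bounded measurable data (see e.g.\ Ladyzhenskaya-Solonnikov-Ural'tseva, Chapters II-III) then yields that every bounded weak solution is parabolic-H\"older continuous, with an exponent $\delta\in(0,1)$ depending only on $\alpha$, and a H\"older seminorm controlled by $\|w\|_{\LQtauT{\infty}}$, $\|f\|_{\LQtauT{\infty}}$ and $\alpha$. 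The core step is a Caccioppoli-type inequality for the truncations $(w-k)_\pm$ obtained by testing the equation with $(w-k)_\pm\zeta^2$ for a parabolic cut-off $\zeta$; the contribution from $f$ is absorbed via Cauchy-Schwarz into the main gradient term plus a bounded volumetric perturbation, placing $(w-k)_\pm$ in De Giorgi's parabolic class. From this class both spatial and temporal H\"older regularity follow by the standard level-set iteration. Near the endpoints $x\in\{0,L\}$, the homogeneous Dirichlet data is used to perform an odd reflection across each endpoint, extending $w$ to a bounded weak solution on a slightly larger interval with suitably reflected $a$ and $f$, so that interior estimates give regularity up to the boundary.

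\textbf{Expected difficulty.} The only genuine subtlety is to interpret the forcing $\pa_x f$, since $f$ is merely $L^\infty$ and $\pa_x f$ is just a distribution in $W^{-1,\infty}$. Rewriting the equation in pure divergence form as above absorbs $f$ cleanly into the flux and reduces everything to the standard DGNM framework, where no pointwise regularity of $\pa_x f$ is needed.
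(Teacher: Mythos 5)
Your proposal is correct and follows essentially the same route as the paper: reflect across the endpoints to reduce to interior estimates, test with truncations $(w-k)_\pm$ times a parabolic cut-off to get a Caccioppoli inequality placing $w$ in the parabolic De Giorgi class (absorbing the $L^\infty$ datum $f$ from the divergence-form flux via Cauchy--Schwarz), and invoke the Ladyzhenskaya--Solonnikov--Ural'tseva/De Giorgi--Nash--Moser machinery for the H\"older conclusion. The only cosmetic difference is that you reflect $w$ oddly while the paper uses an even extension of $w$ (with $a,f$ extended evenly); your choice of parity is the standard one for Dirichlet data and is, if anything, the cleaner bookkeeping.
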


\begin{proof}
\blue{Similar to the proof} of Lemma \ref{lem-diff}, define $\bar{Q}_{\tau,T}=(-L,2L)\times (\tau,T)$, and set
   \blue{ \begin{equation}\label{eq-exten-w}
	\bar{w}(x,t)=\begin{cases}
		w(x,t),& x\in [0,L],\\
        -w(2L-x,t),& x\in (L,2L],\\
        -w(-x,t),& x\in [-L,0).
	\end{cases}
	\end{equation}}
  \blue{Then $\bar{w}$ is the weak solution of}
    \begin{equation}\label{eq-diff3}
	    \begin{cases}
		  \pa_t \bar{w} - \pa_x(\bar{a}\pa_{x} \bar{w}) =\pa_x\bar{f}, & (x,t)\in \bar{Q}_{\tau,T},\\
		  \bar{w}(-L,t) = \bar{w}(2L,t) = 0, &t\in (\tau,T),\\
	 	   \bar{w}(x,\tau) = 0, &x\in(-L,2L),
	   \end{cases}
	   \end{equation}
\blue{where $\bar{a}$ and $\bar{f}$ are given similarly to \eqref{eq-exten-w}}. Obviously,
   \[
   \|\bar{f}\|_{L^{\infty}(\bar{Q}_{\tau,T})}=\|f\|_{L^{\infty}(Q_{\tau,T})}\quad \text{and }~ \|\bar{w}\|_{L^{\infty}(\bar{Q}_{\tau,T})}=\|w\|_{L^{\infty}(Q_{\tau,T})}.
   \]

We fix $(x_0, t_0)\in Q_{\tau,T}$ and $0<\varrho<L$, denote
\[
B_\varrho = \{x\in \Omega: x\in(x_0-\varrho,x_0+\varrho)\}
\]
and, for $0<r\le t_0$,
\[
Q(\varrho,r) = B_\varrho \times (t_0-\tau,t_0) =  \{(x,t): x\in(x_0-\varrho,x_0+\varrho), t_0 - r < t < t_0\}.
\]
For any $0 < \tau_2 < \tau_1 < \min\{1,r\}$ and $0 < \varrho_2 < \varrho_1 < \varrho$ such that $Q(\varrho_1, \tau_1)\subset \bar{Q}_T$, \blue{
let $\xi: \bar{Q}_T \to [0,1]$ be} a smooth cut-off function such that
		\begin{equation*}
			\xi(x,t) = \begin{cases}
				1 &\text{ if } \quad (x,t) \in Q(\vr_2, \tau_2),\\
				0 &\text{ if } \quad (x,t) \not\in Q(\vr_1, \tau_1).
			\end{cases}
		\end{equation*}
For any $k>0$, we denote by $\bar{w}^{(k)} = (\bar{w}-k)_+$. \blue{By multiplying the pde in \eqref{eq-diff3} with $\bar{w}^{(k)}\xi^2$. Integrating over $B_{\vr_1}\times (t_0-\tau_1, t)$, $t_0 - \tau_2 < t < t_0$, and by integrating by parts, we calculate}
		\begin{equation}\label{mul}
			\begin{split}
				\int_{t_0-\tau_1}^t&\int_{B_{\vr_1}}\pa_t\bar{w} \bar{w}^{(k)}\xi^2dx ds + \int_{t_0-\tau_1}^t\int_{B_{\vr_1}}\bar{a}|\pa_x \bar{w}^{(k)}|^2\xi^2dxds\\
=& -2\int_{t_0-\tau_1}^t\int_{B_{\vr_1}} \bar{a}(\pa_x \bar{w}^{(k)}\cdot \pa_x \xi) \bar{w}^{(k)}\xi dxds -\int_{t_0-\tau_1}^t\int_{B_{\vr_1}}\bar{f}\pa_x\bar{w}^{(k)}\xi^2dxds\\
              &-2\int_{t_0-\tau_1}^t\int_{B_{\vr_1}}\bar{f}\bar{w}^{(k)}\xi\pa_x\xi dxds.
			\end{split}
		\end{equation}
		
		Since
		\begin{equation}\label{mul1}
			\begin{aligned}
			\int_{t_0-\tau_1}^t\int_{B_{\vr_1}}\pa_t\bar{w} \bar{w}^{(k)}\xi^2dx ds&= \frac{1}{2}\int_{B_{\vr_1}}(\bar{w}^{(k)}(t))^2\xi(t)^2dx - \int_{t_0-\tau_1}^t\int_{B_{\vr_1}}(\bar{w}^{(k)})^2\xi\pa_t\xi dxds\\
			&\geq \frac{1}{2}\int_{B_{\vr_2}}|\bar{w}^{(k)}(t)|^2dx - \int_{t_0-\tau_1}^t\int_{B_{\vr_1}}(\bar{w}^{(k)})^2\xi\pa_t\xi dxds\\
			&\geq \frac{1}{2}\int_{B_{\vr_2}}|\bar{w}^{(k)}(t)|^2dx - \int_{Q(\vr_1,\tau_1)}|\bar{w}^{(k)}|^2|\xi_t|dxds,
			\end{aligned}
		\end{equation}
		where we used $\xi(\cdot, t_0-\tau_1) = 0$ at the first step and $\xi|_{Q(\rho_2,\tau_2)} \equiv 1$ at the second step. \blue{By applying the Cauchy-Schwarz inequality}, we get
		\begin{equation}\label{mul2}
			\begin{split}
			-2\int_{t_0-\tau_1}^t&\int_{B_{\vr_1}}\bar{a}(\pa_x \bar{w}^{(k)}\cdot\pa_x\xi) \bar{w}^{(k)}\xi dxds \\
			&\leq  \frac 14 \int_{t_0-\tau_1}^t\int_{B_{\vr_1}}\bar{a}|\pa_x \bar{w}^{(k)}|^2\xi^2dxds+ 4\int_{t_0-\tau_1}^t\int_{B_{\vr_1}} \bar{a} |\bar{w}^{(k)}|^2|\pa_x  \xi|^2dxds\\
			&\leq  \frac 14 \int_{t_0-\tau_1}^t\int_{B_{\vr_1}}\bar{a}|\pa_x \bar{w}^{(k)}|^2\xi^2dxds+ 4\int_{Q(\vr_1,\tau_1)}\bar{a}|\bar{w}^{(k)}|^2|\pa_x \xi|^2dxds.
			\end{split}
		\end{equation}
Denoting by $\chi_{\{\bar{w} > k \}}$ the characteristic function of the set $\{(x,t)\in \bar{Q}_{\tau,T}: \bar{w}(x,t) > k\}$,
		we can estimate
		\begin{equation}\label{mul3}
			\begin{split}
			-&\int_{t_0-\tau_1}^t\int_{B_{\vr_1}}\bar{f}\pa_x \bar{w}^{(k)}\xi^2dxds-2\int_{t_0-\tau_1}^t\int_{B_{\vr_1}}\bar{f}\bar{w}^{(k)}\xi\pa_x\xi dxds\\
			&\leq \frac 14\int_{t_0-\tau_1}^t\int_{B_{\vr_1}}\bar{a}|\pa_x \bar{w}^{(k)}|^2\xi^2dxds + \|\bar{f}\|_{\LQtauT{\infty}}^2\int_{t_0-\tau_1}^t\int_{B_{\vr_1}}\frac{1}{\bar{a}}\xi^2\chi_{\{\bar{w}> k\}}dxds\\
            &\quad+2\|\bar{f}\|_{\LQtauT{\infty}}^2\int_{t_0-\tau_1}^t\int_{B_{\vr_1}}|\xi|^2\chi_{\{\bar{w} > k\}}dxds+\frac 12\int_{t_0-\tau_1}^t\int_{B_{\vr_1}}|\bar{w}^{(k)}|^2|\pa_x\xi|^2dxds\\
            &\leq \frac 14\int_{t_0-\tau_1}^t\int_{B_{\vr_1}}\bar{a}|\pa_x \bar{w}^{(k)}|^2\xi^2dxds + \frac{\|\bar{f}\|_{\LQtauT{\infty}}^2}{\blue{\alpha_2}}\int_{Q(\vr_1,\tau_1)}\chi_{\{\bar{w}> k\}}dxds\\
            &\quad +2\|\bar{f}\|_{\LQtauT{\infty}}^2\int_{Q(\vr_1,\tau_1)}\chi_{\{\bar w > k\}}dxds+\frac 12\int_{Q(\vr_1,\tau_1)}|\bar{w}^{(k)}|^2|\pa_x\xi|^2dxds\\
            &\leq \frac 14\int_{t_0-\tau_1}^t\int_{B_{\vr_1}}\bar{a}|\pa_x \bar{w}^{(k)}|^2\xi^2dxds + C_{T,\tau}\int_{Q(\varrho_1,\tau_1)}\bra{|w^{(k)}|^2|\pa_x\xi|^2 + \chi_{\{\bar w > k \}} }dxds
			\end{split}
		\end{equation}
		where
		\begin{equation*}
			C_{T,\tau}:= \max\left\{\frac{\|\bar f\|_{\LQtauT{\infty}}^2}{\blue{\alpha_2}}; \frac 12; 2\|\bar f\|_{\LQtauT{\infty}}^2 \right\}.
		\end{equation*}

We now insert the estimates \eqref{mul1}, \eqref{mul2} and \eqref{mul3} into \eqref{mul} to obtain that, for all $t\in (t_0-\tau_2, t_0)$,
		\begin{equation}\label{est}
		\begin{split}
            \frac{1}{2}&\|\bar{w}^{(k)}(t)\|_{L^2(B_{\vr_2})}^2 + \frac{\blue{\alpha_2}}{2}\int_{t_0-\tau_1}^t\int_{B_{\vr_1}}|\pa_x \bar{w}^{(k)}|^2\xi^2dxds\\
			&\leq  \int_{Q(\vr_1,\tau_1)}|\bar{w}^{(k)}|^2|\xi_t|dxds+\frac{4}{\blue{\alpha_2}}\int_{Q(\vr_1,\tau_1)}|\bar{w}^{(k)}|^2|\pa_x \xi|^2dxds\\
            &\quad + C_{T,\tau} \int_{Q(\vr_1,\tau_1)}\Big(|\bar{w}^{(k)}|^2|\pa_x\xi|^2+\chi_{\{\bar{w}> k\}}\Big)dxds\\
            &\leq C_{T,\tau} \int_{Q(\vr_1,\tau_1)}\Big(|\bar{w}^{(k)}|^2(|\xi_t|+|\pa_x \xi|^2) + \chi_{\{\bar{w}> k\}}\Big)dxds.
		\end{split}
		\end{equation}

By adding the inequality $\frac {\blue{\alpha_2}}{2} \int_{t_0-\tau_1}^{t}\int_{B_{\vr_1}}|\bar{w}^{(k)}|^2\xi^2dxds \leq \frac {\blue{\alpha_2}}{2} \int_{Q(\vr_1,\tau_1)}|\bar{w}^{(k)}|^2dxds$ on both sides of \eqref{est}, and taking the supremum over $t\in (t_0 - \tau_2, t_0)$, we have that
		\begin{equation}\label{est1}
			\begin{split}
			\frac{1}{2}&\left(\sup_{t_0 - \tau_2 < t < t_0}\|\bar{w}^{(k)}(t)\|_{L^2(B_{\vr_2})}^2 + \int_{t_0 - \tau_2}^{t_0}\|\bar{w}^{(k)}\|_{H^1(B_{\vr_2})}^2ds\right)\\
			&\leq C_{T,\tau} \int_{Q(\vr_1,\tau_1)}\Big(|\bar{w}^{(k)}|^2(|\xi_t|+|\pa_x \xi|^2) +\chi_{\{\bar{w}> k\}}\Big)dxds.
			\end{split}
		\end{equation}
		
Finally, due to the definition of the cut-off function $\xi$, there exists a constant $C\geq 1$ independent of $\vr_i$ and $\tau_i$ such that $|\pa_x \xi| \leq C(\vr_1-\vr_2)^{-1}$ and $|\pa_t \xi| \le C(\tau_1 - \tau_2)^{-1}$. Noting also $1 \leq (\tau_1 - \tau_2)^{-1}$ since $\tau_1, \tau_2 \in (0,1)$, we get from \eqref{est1} the energy estimate
\begin{equation}\label{local-en}
	\begin{split}
	&\sup_{t_0-\tau_2 < t < t_0}\|(\bar{w}-k)_+\|_{L^2(B_{\varrho_2})}^2+ \int_{t_0-\tau_2}^{t_0}\|(\bar{w}-k)_+\|_{H^1(B_{\varrho_2})}^2ds\\
	&~~\leq C_{T,\tau}\left[((\varrho_1-\rho_2)^{-2} + (\tau_1 - \tau_2)^{-1})\|(\bar{w} -k)_+\|_{L^2(Q(\varrho_1,\tau_1))}^2 + \int_{Q(\varrho_1,\tau_1)}\chi_{\{\bar{w}>k\}}dxds\right],
\end{split}
\end{equation}
where the constant $C$ depends only on $\|f\|_{\LQtauT{\infty}}$ and $\alpha$, but is independent of $x_0$ and $t_0$.

\blue{By the arbitrariness of} $x_0\in (0,L)$ and $t_0\in (\tau,T)$, combining with the estimates \eqref{local-en} and the fact $\|w\|_{L^\infty(Q_{\tau,T})} \leq C_{T,\tau}$, thanks to \cite[\blue{Theorem II.7.1, Lemmas 7.2 and 7.3}]{Ladyzenskaja1968Linear}, we can obtain the local H\"older continuity of $\bar{w}$ on $(-\varrho_2,L+\varrho_2)\times (\tau,T)$, which implies the global H\"older continuity of $w$ on $Q_{\tau,T}$ immediately, that is, there exist constant $\delta\in (0,1)$ such that
	\begin{equation*}
		|w(x,t) -w(x', t')| \leq C(|x - x'|^{\delta} + |t - t'|^{\delta/2})  \quad \text{for all} \quad (x,t), (x', t') \in Q_{\tau,T},
	\end{equation*}
where the constant $C$ depends only on $\|f\|_{\LQtauT{\infty}}$, $\|w\|_{L^\infty(Q_{\tau,T})}$ and $\blue{\alpha_2}$.
\end{proof}

\subsection{Global existence}
\medskip
The following bound in $L^\infty(0,T;L^1(\Omega))$ is immediate.
\begin{lemma}\label{L1-bound}
	\blue{Assume \eqref{A1} and \eqref{A2}.} For any $0<T<T_{\max}$, it holds
	\begin{equation*}
		\sup_{i=1,\ldots, m}\|u_i(t)\|_{\LO{1}} \leq C_T \quad \text{ for all } \quad t\in (0,T).
	\end{equation*}
\end{lemma}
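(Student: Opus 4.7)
The plan is to essentially reproduce the heuristic computation already sketched in the introduction, but now as a rigorous argument on the classical solution guaranteed by Theorem \ref{local}. First I would sum the equations of \eqref{eq1.1} over $i=1,\ldots, m$ and integrate over $\Omega$. Since we work with a classical solution and the boundary conditions are homogeneous Neumann, integration by parts gives $\int_\Omega \sum_i d_i \pa_{xx} u_i\, dx = 0$, so
\begin{equation*}
  \frac{d}{dt}\sumi \intO u_i(x,t)\, dx = \sumi \intO f_i(x,t,u)\, dx.
\end{equation*}

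Next I would use the mass-control assumption \eqref{A2} to bound the right-hand side by $k_0 |\Omega| + k_1 \sum_i \int_\Omega u_i\, dx$. Setting $M(t) := \sumi \int_\Omega u_i(x,t)\, dx$, this yields the scalar differential inequality
\begin{equation*}
  \frac{d}{dt} M(t) \leq k_0|\Omega| + k_1 M(t),
\end{equation*}
so Gronwall's lemma gives, for any $T<T_{\max}$ and $t\in(0,T)$,
\begin{equation*}
  M(t) \leq e^{k_1 T} M(0) + \frac{k_0}{k_1}|\Omega|\,(e^{k_1 T}-1) =: \widetilde C_T
\end{equation*}
(with the usual interpretation $\frac{k_0}{k_1}(e^{k_1T}-1) \to k_0 T$ as $k_1 \to 0$, to cover the case $k_1 \leq 0$ or $k_1 = 0$). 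Finally, by the quasi-positivity in \eqref{A1} together with the non-negativity of the initial data, Theorem \ref{local} guarantees $u_i(x,t)\geq 0$ for all $i$ and all $t\in[0,T_{\max})$. Hence $\|u_i(t)\|_{\LO{1}} = \int_\Omega u_i(x,t)\, dx \leq M(t) \leq \widetilde C_T$ for every $i=1,\ldots,m$, which is the desired bound with $C_T = \widetilde C_T$.

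There is really no significant obstacle here: the estimate uses only the conservation/control structure of the reactions and non-negativity. The one mild caveat is that the initial mass $M(0)$ depends on the (smoothed) initial data $u_{i,0}\in C^2(\bar\Omega)$ fixed after Theorem \ref{local}, which are bounded, so $M(0)<\infty$ automatically. Everything else is routine, and the constant $C_T$ produced depends only on $T$, $k_0$, $k_1$, $|\Omega|$, and the $L^1$-norms of the initial data.
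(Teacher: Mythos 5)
Your proof is correct and follows essentially the same route as the paper: sum the equations, integrate over $\Omega$ (the diffusion terms vanish by the Neumann boundary conditions), apply \eqref{A2} and Gronwall, and conclude via the non-negativity of the solution. The only difference is that you spell out the details (the $k_0|\Omega|$ constant, the degenerate case $k_1=0$, and the finiteness of the initial mass) that the paper leaves implicit.
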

\begin{proof}
	By summing the equations in \eqref{eq1.1}, integrating on $\Omega$ and using \eqref{A2}, we have
	\begin{equation*}
		\frac{d}{dt}\sumi \intO u_i(x,t)dx \leq k_0 + k_1\sumi \intO u_i(x,t)dx.
	\end{equation*}
	The classical Gronwall inequality gives the desired estimate.
\end{proof}

Summing the equations of \eqref{eq1.1}, it follows from \eqref{A2'} that
\begin{equation}\label{ee}
	\sumi u_i(x,t) - \pa_{xx} \int_0^t\sumi d_iu_i(x,s)ds = \sumi u_{i,0}(x) + \int^t_0g(x,s)ds =: G(x,t).
\end{equation}
Denote by
\begin{equation}\label{def_v}
	v(x,t):= \int_0^t\sumi d_iu_i(x,s)ds.
\end{equation}
Then we have
\begin{equation}\label{eq_v1}
	\begin{cases}
		b(x,t)\pa_tv(x,t) - \pa_{xx} v(x,t) = G(x,t),		 &x\in\Omega, \; t>0,\\
		\pa_xv(0,t) = \pa_xv(L,t) = 0, &t>0,\\
		v(x,0) = 0, &x\in\Omega,
	\end{cases}
\end{equation}
where
\begin{equation}\label{def_b}
	\min_{i=1,\ldots, m}\left\{\frac{1}{d_i} \right\}\leq b(x,t):= \frac{\sumi\limits u_i(x,t)}{\sumi\limits d_iu_i(x,t)} \leq \max_{i=1,\ldots, m}\left\{\frac{1}{d_i} \right\}.
\end{equation}
\blue{It follows from \eqref{ee} that $v$ is a solution to}
\begin{equation}\label{eq_v2}
	\begin{cases}
		\pa_tv(x,t) - \pa_{xx} v(x,t) = U_d(x,t) + G(x,t), &x\in\Omega, \; t>0,\\
		\pa_x v(0,t) = \pa_xv(L,t) = 0, &t>0,\\
		v(x,0) = 0, &x\in\Omega,
	\end{cases}
\end{equation}
where
\begin{equation}\label{def_Ud}
	U_d(x,t):= \sumi (d_i - 1)u_i(x,t).
\end{equation}

The following lemma is crucial as it shows that, in one dimension, also the spatial derivative of $v$ is H\"older continuous.

\begin{lemma}\label{lem3}
	The function $v$ defined in \eqref{def_v} satisfies
\blue{\begin{equation*}
|\pa_xv(x,t) - \pa_xv(x',t)| \leq \mathcal C_0|x'-x|^{\alpha} \quad \forall (x',t),(x,t)\in Q_T,
	\end{equation*}
where $\mathcal C_0$ and $\alpha\in (0,1)$ depend only on $T$, $\|G\|_{\LQ{\infty}}$, $L$, and $d_i$, $0<T<T_{\max}$.}
\end{lemma}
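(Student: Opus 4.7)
The plan is to set $w := \partial_x v$ and observe that $w$ satisfies an equation of exactly the form treated in Lemma \ref{key_lem2}; the argument has three steps. First, I would produce an $L^\infty$-bound for $\partial_x v$. The forcing in equation \eqref{eq_v2} is $U_d + G$, which lies in $L^\infty(Q_T)$ because $u_i \in L^\infty(Q_T)$ for any $T < T_{\max}$ (by definition of the maximal interval of classical existence) and because $G \in L^\infty(Q_T)$ by \eqref{A2'}. Combining this with the H\"older continuity of $v$ from Lemma \ref{lem_H} (with some exponent $\theta \in (0,1)$) and the vanishing initial datum $v(\cdot,0) \equiv 0$, the regularity interpolation Lemma \ref{lem2.3} applied to \eqref{eq_v2} yields
\[
\|\partial_x v\|_{L^\infty(Q_T)} \leq B\,\mathcal{C}_0^{1/(2-\theta)}\|U_d + G\|_{L^\infty(Q_T)}^{(1-\theta)/(2-\theta)}.
\]

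Second, I would differentiate \eqref{eq_v2} in the spatial variable. The homogeneous Neumann conditions on $v$ force $w(0,t) = w(L,t) = 0$, while $v(\cdot,0) \equiv 0$ gives $w(\cdot,0) \equiv 0$, so $w$ solves
\[
\partial_t w - \partial_{xx} w = \partial_x(U_d + G) \quad \text{in } Q_T, \qquad w(0,t) = w(L,t) = 0, \qquad w(\cdot,0) = 0.
\]
Third, I apply Lemma \ref{key_lem2} with $a \equiv 1$ and $f := U_d + G$: the bound from the first step gives $w \in L^\infty(Q_T)$, and $f \in L^\infty(Q_T)$ was already established, so both hypotheses of that lemma are met. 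It follows that $w$ is H\"older continuous on $Q_T$ with some exponent $\delta \in (0,1)$ in the spatial variable; taking $\alpha := \delta$ produces the stated estimate.

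I expect the only delicate point to be the rigorous justification of the spatial differentiation and the fact that $w$ inherits the homogeneous Dirichlet trace at $x = 0, L$. This is handled by the smoothing/compatibility reduction already performed just after Theorem \ref{local}: since one may assume $u_{i,0} \in C^2(\overline{\Omega})$ with $\partial_x u_{i,0}|_{\partial\Omega} = 0$, the function $v$ defined in \eqref{def_v} is classical up to the boundary on $\overline{Q_T}$. Hence $w = \partial_x v$ is a bona fide classical function, its pointwise traces at $x = 0, L$ and at $t = 0$ vanish, and the derived equation for $w$ holds in the strong sense and, a fortiori, in the weak sense required by Lemma \ref{key_lem2}.
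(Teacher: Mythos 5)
Your overall architecture (get an $L^\infty$ bound on $\partial_x v$, differentiate the equation, feed $w=\partial_x v$ into Lemma \ref{key_lem2}) matches the paper's, but you have applied it to the wrong equation, and this creates a genuine gap. You work throughout with \eqref{eq_v2}, whose forcing term is $U_d+G$ with $U_d=\sum_i(d_i-1)u_i$. Your $L^\infty$ bound for $\partial_x v$ then reads $\|\partial_x v\|_{L^\infty(Q_T)}\lesssim \mathcal C_0^{1/(2-\theta)}\|U_d+G\|_{L^\infty(Q_T)}^{(1-\theta)/(2-\theta)}$, and the Hölder constant $K_0$ produced by Lemma \ref{key_lem2} depends on $\|w\|_{L^\infty}$ and on $\|f\|_{L^\infty}=\|U_d+G\|_{L^\infty}$. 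Both quantities are of size $U:=\sup_{Q_T}\sup_i|u_i|$, so the constant $\mathcal C_0$ you obtain in Lemma \ref{lem3} grows with $U$. This is fatal for the way the lemma is used: in Lemma \ref{lem4} and the global existence proof one needs $\sup_{Q_T}|\partial_{xx}v|\le \mathcal C_1\mathcal C_0^{\frac{1-\delta}{2-\alpha-\delta}}[\sup_{Q_T}|\partial_x U_d+\partial_x G|]^{\frac{1-\alpha}{2-\alpha-\delta}}$ with $\mathcal C_0$ \emph{independent} of $U$, so that the final inequality $U\le C_T[1+U^{(2+\varepsilon/2)\frac{1-\alpha}{2-\alpha-\delta}}]$ has a sublinear power of $U$ on the right and Young's inequality closes the estimate. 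If $\mathcal C_0$ itself carries positive powers of $U$, the exponent is no longer below one and the bootstrap collapses. (Your appeal to ``$u_i\in L^\infty(Q_T)$ by definition of the maximal interval'' is exactly the qualitative, non-quantitative bound that cannot be used here: the whole point of the lemma is to produce an a priori estimate.)

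The paper avoids this by working with \eqref{eq_v1}, i.e. $b(x,t)\partial_t v-\partial_{xx}v=G$, equivalently $\partial_t v-a(x,t)\partial_{xx}v=a(x,t)G$ with $a=b^{-1}$ bounded above and below in terms of the $d_i$ only. The forcing there is $G$ (resp. $aG$), which is controlled a priori by \eqref{A2'} and the initial data, not by $U$. Concretely: the $L^\infty$ bound on $v$ comes from the $L^1$ bounds of Lemma \ref{L1-bound} plus testing \eqref{ee} with $v$; the $L^\infty$ bound on $\partial_x v$ comes from Lemma \ref{lem-diff} applied to \eqref{eq_v1} (not from Lemma \ref{lem2.3} applied to \eqref{eq_v2}), with a constant depending only on $\|v\|_{L^\infty}$, $\|G\|_{L^\infty}$ and the ellipticity; and Lemma \ref{key_lem2} is then applied to $\partial_t w-\partial_x(a\,\partial_x w)=\partial_x(aG)$ with $f=aG$. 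This is precisely why Lemma \ref{key_lem2} is formulated for a variable, merely bounded measurable coefficient $a$ — the price of removing $U_d$ from the right-hand side is a non-constant principal part. To repair your argument you should replace \eqref{eq_v2} by \eqref{eq_v1} throughout and replace the use of Lemma \ref{lem2.3} in Step 1 by Lemma \ref{lem-diff}.
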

\begin{proof}
	We first claim that
   \begin{equation}\label{v_x_bounded}
   \|\pa_xv(x,t)\|_{\LQ{\infty}} \le C_T.
   \end{equation}
   \blue{To prove this we first show}
   \begin{equation}\label{Linfty-v}
   	\|v\|_{\LQ{\infty}} \leq C_T.
   \end{equation}
   \blue{This was in fact showed in \cite{fellner2020global}, but in one dimension, it can be shown by elementary arguments}. \blue{Note also that $C_T$ in \eqref{v_x_bounded} and \eqref{Linfty-v} depend only on $T$, $\|G\|_{\LQ{\infty}}$, $L$, and $d_i$.} Indeed, thanks to Lemma \ref{L1-bound} we have
   \begin{equation}\label{L1-v}
   	\|v(t)\|_{\LO{1}} \leq \sumi d_i\int_0^t\|u_i(s)\|_{\LO{1}}ds \leq C_T \quad \text{ for all } t\in (0,T).
   \end{equation}
   \blue{Multiplying both sides of \eqref{ee} by $v$,  integrating over $\Omega$,} and using $u_i \geq 0$ and $v\geq 0$, we have
   \begin{equation*}
	   	\intO|\pa_x v(x,t)|^2dx \leq \intO G(x,t)v(x,t)dx \leq \frac 14\|G(t)\|_{\LO{2}}^2 + \|v(t)\|_{\LO{2}}^2.
   \end{equation*}
   Adding $\|v(t)\|_{\LO{2}}^2$ to both sides gives
   \begin{equation*}
   		\|v\|_{H^1(\Omega)}^2 \leq  \frac{|\Omega|}{4}\|G\|_{\LQ{\infty}}^2 + 2\|v(t)\|_{\LO{2}}^2.
   \end{equation*}
	The interpolation inequality $\|v(t)\|_{L^2(\Omega)}  \leq C\|v(t)\|_{H^1(\Omega)}^{1/3}\|v(t)\|_{L^1(\Omega)}^{2/3}$ and Cauchy-Schwarz's inequality lead to
	\begin{equation*}
		\blue{\|v\|_{H^1(\Omega)} \leq C\|G\|_{\LQ{\infty}} \leq C_T.}
	\end{equation*}
	Now we use the one dimensional embedding $H^1(\Omega)\hookrightarrow L^\infty(\Omega)$ to eventually obtain the estimate \eqref{Linfty-v}.

	\medskip
	From \eqref{Linfty-v} and the fact that $v\geq 0$, $\pa_t v\geq 0$, we can apply Lemma \ref{lem-diff} to obtain the boundedness of the gradient \eqref{v_x_bounded}.
	
	\medskip
	Define $a(x,t):= (b(x,t))^{-1}$ we get from \eqref{eq_v1} that
	\begin{equation*}
		\pa_tv(x,t) - a(x,t)\pa_{xx}v(x,t) = a(x,t)G(x,t).
	\end{equation*}	
    By differentiating this equation with respect to $x$, and denoting $w(x,t):= \pa_x v(x,t)$, we obtain
	\begin{equation*}
		\pa_t w(x,t) - \pa_x(a(x,t)\pa_x w(x,t)) = \pa_x(a(x,t)G(x,t)).
	\end{equation*}
	Note that $w$ satisfies homogeneous Dirichlet boundary condition since $w(0,t) = \pa_xv(0,t) = 0$, \blue{and  $w(L,t) = \pa_xv(L,t) = 0$.}

Now we can apply Lemma \ref{key_lem2} to obtain the H\"older continuity of $w$, which finishes the proof of Lemma \ref{lem3}.
\end{proof}

\begin{lemma}\label{lem4}
	\blue{Assume \eqref{A1} and \eqref{A2}.} Let $v$ be the solution to \eqref{eq_v2}. It holds, for any $0<\delta < 2- \alpha$,
	\begin{equation*}
		\sup_{Q_T}|\pa_{xx}v| \leq \mathcal{C}_1\mathcal{C}_0^{\frac{1-\delta}{2-\alpha-\delta}}\left[\sup_{Q_T}|\pa_x U_d + \pa_x G| \right]^{\frac{1-\alpha}{2-\alpha-\delta}}
	\end{equation*}
	where the constants $\mathcal{C}_0$ and $\alpha$ are given in Lemma \ref{lem3},  and \blue{$\mathcal{C}_1=\mathcal{C}_1(T, \mathcal{C}_0, \alpha, \delta$)}.
\end{lemma}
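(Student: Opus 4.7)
The plan is to derive an equation for the spatial derivative $w := \partial_x v$ and then apply the Dirichlet version of the regularity interpolation, Lemma \ref{lem-Diri}, using the spatial H\"older regularity of $w$ established in Lemma \ref{lem3}.

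First, I would differentiate \eqref{eq_v2} with respect to $x$. Setting $w(x,t) = \partial_x v(x,t)$, one formally obtains
\begin{equation*}
\partial_t w - \partial_{xx} w = \partial_x U_d + \partial_x G \quad \text{in } Q_T.
\end{equation*}
The crucial observation is the boundary condition: since $v$ satisfies homogeneous Neumann conditions on $\partial\Omega$, we have $w(0,t) = \partial_x v(0,t) = 0$ and $w(L,t) = \partial_x v(L,t) = 0$, so $w$ satisfies \emph{homogeneous Dirichlet} boundary conditions. The initial condition is $w(x,0) = \partial_x v(x,0) = 0$. Thus $w$ solves the linear heat equation with homogeneous Dirichlet boundary data, zero initial data, and forcing term $\phi := \partial_x U_d + \partial_x G$, which belongs to $L^\infty(Q_T)$ (since $u_i$ is a classical solution on $[0,T]$ with $T<T_{\max}$, so $\partial_x u_i$ is bounded on $Q_T$, and $\partial_x G$ is bounded by \eqref{A2'} and the regularity of the initial data).

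Next, I would invoke Lemma \ref{lem3}, which states
\begin{equation*}
|w(x,t) - w(x',t)| \leq \mathcal{C}_0 |x - x'|^{\alpha}, \qquad \forall (x,t), (x',t) \in Q_T,
\end{equation*}
for some $\alpha \in (0,1)$. This is exactly the spatial H\"older hypothesis required to apply Lemma \ref{lem-Diri} (with $\tau = 0$, $\gamma = \alpha$, $H = \mathcal{C}_0$, $u_0 \equiv 0$). Applying that lemma yields, for any $0 < \delta < 2 - \alpha$,
\begin{equation*}
\sup_{Q_T} |\partial_x w| \leq C_T\, \mathcal{C}_0^{\frac{1-\delta}{2-\alpha-\delta}}\, \|\phi\|_{L^\infty(Q_T)}^{\frac{1-\alpha}{2-\alpha-\delta}}.
\end{equation*}
Since $\partial_x w = \partial_{xx} v$ and $\phi = \partial_x U_d + \partial_x G$, setting $\mathcal{C}_1 := C_T$ gives exactly the desired inequality.

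The step most in need of care is the first one: justifying the differentiation of \eqref{eq_v2} rigorously and checking that all terms are sufficiently regular so that $w$ is a (classical) solution to the heat equation with the stated data. This is ensured because we are working on $[0,T]$ with $T<T_{\max}$, on which $u$ is a classical solution (with the regularized initial data fixed after Theorem \ref{local}), so $\partial_x u_i$ is well defined and bounded, and hence so is $\partial_x U_d$; combined with the assumed regularity $\partial_x g \in L^\infty(Q_T)$ from \eqref{A2'}, the forcing $\phi$ lies in $L^\infty(Q_T)$, and the compatibility of boundary and initial data for $w$ causes no difficulty. Beyond this verification the argument is a direct combination of Lemmas \ref{lem3} and \ref{lem-Diri}.
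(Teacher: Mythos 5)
Your proposal is correct and follows essentially the same route as the paper: differentiate \eqref{eq_v2} in $x$, observe that $w=\partial_x v$ satisfies homogeneous Dirichlet boundary conditions and zero initial data, invoke the H\"older continuity from Lemma \ref{lem3}, and apply Lemma \ref{lem-Diri} (the zero initial data correctly killing the $\|u_0\|_{C^1}$ term). The extra regularity discussion you add is sound but not needed beyond what the paper already assumes.
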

\begin{proof}
	By differentiating \eqref{eq_v2} with respect to $x$, we have
	\begin{equation*}
		\pa_t(\pa_x v) - \pa_{xx}(\pa_xv(x,t)) = \pa_x\bra{U_d(x,t) + G(x,t)},
	\end{equation*}
	and $\pa_xv$ satisfies homogeneous Dirichlet boundary condition $\pa_xv(0,t) = \pa_xv(L,t) = 0$. Thanks to the H\"older continuity of $\pa_xv$ in Lemma \ref{lem3}, we can apply Lemma \ref{lem-Diri} to conclude the proof of Lemma \ref{lem4}.
\end{proof}

We are now ready to show the global existence part of Theorem \ref{th1.2}.

\begin{proof}[\textbf{Proof of Theorem \ref{th1.2} - Global existence}]
	We denote by
	\begin{equation*}
		U:= \sup_{Q_T}\sup_{i}|u_i(x,t)|.
	\end{equation*}
	From the equation of $u_i$
	\begin{equation*}
		\pa_t u_i - d_i\pa_{xx}u_i = f_i(x,t,u),
	\end{equation*}
	assumption \eqref{A3}, and Lemma \ref{lem2.3}, we have
	\begin{equation*}
		\sup_{Q_T}|\pa_x u_i| \leq \sup_{\Omega}|\pa_xu_{i,0}| + CU^{\frac 12}\left[1+U^{3+\varepsilon}\right]^{\frac 12} \leq C\bra{1+U^{2+\frac \varepsilon2}}.
	\end{equation*}
	From \eqref{def_Ud}, we know
  \begin{equation*}
	\sup_{Q_T}|\pa_x U_d|\leq \sumi (d_i - 1)\sup_{Q_T}|\pa_x u_i|.
  \end{equation*}
  It follows that
	\begin{equation*}
		\sup_{Q_T}|\pa_x U_d| \leq C\bra{1+U^{2+\frac \varepsilon2}}.
	\end{equation*}	
	Therefore, Lemma \ref{lem4} implies
	\begin{equation*}
		\sup_{Q_T}|\pa_{xx}v| \leq C_T\left[1+U^{2+\frac \varepsilon2}\right]^{\frac{1-\alpha}{2-\alpha-\delta}} \leq C_T\left[1+U^{(2+\frac \varepsilon2)\frac{1-\alpha}{2-\alpha-\delta}}\right].
	\end{equation*}
	It then follows from \eqref{ee} that
	\begin{equation*}
		U \leq \sup_{Q_T}|\pa_{xx}v| + \|G\|_{\LQ{\infty}} \leq C_T\left[1 + U^{(2+\frac \varepsilon2)\frac{1-\alpha}{2-\alpha-\delta}}\right].
	\end{equation*}
    We can choose $\varepsilon$ and $\delta$ small enough such that
    \begin{equation*}
       (2+\frac{\varepsilon}{2})\frac{1-\alpha}{2-\alpha-\delta}<1 \quad\text{or equivalently}\quad  \varepsilon<\frac{2(\alpha-\delta)}{1-\alpha}.
    \end{equation*}
	By apply Young's inequality to finally obtain
	\begin{equation*}
		U \leq C_T,
	\end{equation*}
	which confirms the global existence of \eqref{eq1.1}.	
\end{proof}

\subsection{Uniform-in-time bounds}
Now we consider the case where $g\equiv 0$ in the assumption \eqref{A2'}. To obtain the uniform-in-time bound for this solution,  we just need to show that
\begin{equation*}
     \sup_{\tau\in \mathbb N}\|u_i\|_{L^\infty(Q_{\tau,\tau+1})}\leq C.
\end{equation*}
where we recall $Q_{\tau, \tau+1} = \Omega\times(\tau,\tau+1)$.

\medskip
Let $\tau \in \mathbb N$ and $\vpt: \R_+ \to [0,1]$ a smooth function such that $\vpt(s) = 0$ for $s\leq \tau$, $\vpt(s) = 1$ for $s\ge \tau+1$, and $\vpt'(s) \geq 0$ for all $s\in \R_+$. By multiplying the equation \eqref{eq1.1} by $\vpt$ and denoting
\begin{equation}\label{def_w}
	w_i(x,t) = \vpt(t)u_i(x,t),
\end{equation}
 we obtain
\begin{equation}\label{eq_w_tau}
	\begin{cases}
		\pa_t w_i - d_i\pa_{xx}w_i = \vpt' u_i + \vpt f_i(x,t,u), &(x,t)\in Q_{\tau,\tau+2},\\
		\pa_xw_i(0,t) = \pa_xw_i(L,t) = 0, &t\in (\tau,\tau+2),\\
		w_i(x,\tau) = 0, &x\in\Omega.
	\end{cases}
\end{equation}

We sum the equations of \eqref{eq_w_tau}, use $g\equiv 0$, and integrate the result on $(\tau,t)$, $t\in(\tau,\tau+2)$ to obtain
\begin{equation}\label{eq_v_tau1}
	\sumi w_i(x,t) - \pa_{xx}\int_{\tau}^{t}\sumi d_iw_i(x,s)ds = z(x,t):= \sumi \int_{\tau}^{t}\vpt' u_i(x,s)ds.
\end{equation}
With
\begin{equation}\label{def_v_tau}
	v(x,t):= \int_{\tau}^{t}\sumi d_iw_i(x,s)ds
\end{equation}
and the function $b(x,t)$ defined as in \eqref{def_b} we also have
\begin{equation}\label{eq_v_tau2}
	\begin{cases}
		b(x,t)\pa_t v - \pa_{xx}v = z(x,t), &(x,t)\in Q_{\tau,\tau+2},\\
		\pa_xv(0,t) = \pa_xv(L,t) = 0, &t\in (\tau,\tau+2),\\
		v(x,\tau) = 0, &x\in\Omega
	\end{cases}
\end{equation}
and
\begin{equation}\label{eq_v_tau3}
	\begin{cases}
		\pa_t v - \pa_{xx}v = \Phi(x,t):= \sumi (d_i-1)w_i(x,t) + z(x,t), &(x,t)\in Q_{\tau,\tau+2},\\
		\pa_xv(0,t) = \pa_xv(L,t) = 0, &t\in(\tau,\tau+2),\\
		v(x,\tau) = 0, &x\in\Omega.
	\end{cases}
\end{equation}

\begin{lemma}[Lemma 2.1, \cite{fellner2021uniform}]\label{from_fmt}
	Let $\Omega\subset\R^n$ be bounded with smooth boundary, let $d>0$ and $0\leq \tau < T$. Let $u$ be the solution to
	\begin{equation*}
	\begin{cases}
		\pa_t u - d\Delta u = \psi(x,t), &(x,t)\in Q_{\tau,T},\\
		\nabla u\cdot \nu = 0, &(x,t)\in\pa\Omega\times(\tau,T),\\
		u(x,\tau) = u_0(x), &x\in\Omega.
	\end{cases}
	\end{equation*}
	Assume that there exists $\gamma \in [0,1)$ such that
	\begin{equation*}
		\blue{|u(x,t) - u(x',t)| \leq H|x - x'|^{\gamma} \quad \forall (x,t), (x',t)\in Q_{\tau,T}.}
	\end{equation*}
	Then
	\begin{equation*}
		\sup_{Q_{\tau,T}}|\nabla u(x,t)| \leq C\|u_0\|_{C^1(\Omega)} + BH^{\frac{1}{2-\gamma}}\|\psi\|_{L^{\infty}(Q_{\tau,T})}^{\frac{1-\gamma}{2-\gamma}}
	\end{equation*}
	where $B>0$ and $C>0$ are constants depending only on $\Omega$, $n$, $d$ and $\gamma$.
\end{lemma}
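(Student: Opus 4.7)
The plan is to follow the classical Kanel'/Fellner--Morgan--Tang representation argument, which decomposes $u$ via the Neumann heat semigroup and then balances a piece controlled by the H\"older seminorm $H$ against a piece controlled by $\|\psi\|_{L^{\infty}(Q_{\tau,T})}$. Let $G(x,y,h;d)$ denote the Green's function of $\partial_t - d\Delta$ on $\Omega$ with homogeneous Neumann boundary conditions. Two properties of $G$ are decisive: (i) $\int_\Omega G(x,y,h;d)\,dy \equiv 1$, so that $\int_\Omega \nabla_x G(x,y,h;d)\,dy = 0$; and (ii) the Gaussian gradient estimate
\begin{equation*}
|\nabla_x G(x,y,h;d)| \;\leq\; C\, h^{-(n+1)/2}\exp\!\bra{-c|x-y|^2/h}, \qquad 0<h\le 1,
\end{equation*}
with constants depending only on $\Omega$, $n$, $d$.

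First I would write Duhamel's formula from the initial time,
\begin{equation*}
u(x,t) \;=\; \int_\Omega G(x,y,t-\tau;d)\,u_0(y)\,dy \;+\; \int_\tau^t\!\!\int_\Omega G(x,y,t-s;d)\,\psi(y,s)\,dy\,ds,
\end{equation*}
and note that standard parabolic smoothing bounds $\nabla_x$ of the first integral by $C\|u_0\|_{C^1(\Omega)}$. Next, for any $0<h\le t-\tau$, I would use the semigroup property to rewrite
\begin{equation*}
u(x,t) \;=\; \int_\Omega G(x,y,h;d)\,u(y,t-h)\,dy \;+\; \int_{t-h}^{t}\!\!\int_\Omega G(x,y,t-s;d)\,\psi(y,s)\,dy\,ds,
\end{equation*}
differentiate in $x$, and invoke property (i) to subtract $u(x,t-h)$ inside the first integrand. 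This yields
\begin{equation*}
\nabla_x u(x,t) \;=\; \int_\Omega \nabla_x G(x,y,h;d)\,[u(y,t-h)-u(x,t-h)]\,dy \;+\; \int_{t-h}^{t}\!\!\int_\Omega \nabla_x G(x,y,t-s;d)\,\psi(y,s)\,dy\,ds.
\end{equation*}

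Applying the H\"older hypothesis in the first integrand together with standard Gaussian moment estimates based on (ii) gives
\begin{equation*}
\bigl|\text{first integral}\bigr| \;\leq\; C\,H\, h^{-(1-\gamma)/2},
\end{equation*}
while (ii) integrated in $y$ yields $\int_\Omega |\nabla_x G(x,y,t-s;d)|\,dy \leq C(t-s)^{-1/2}$, so that
\begin{equation*}
\bigl|\text{second integral}\bigr| \;\leq\; C\,\|\psi\|_{L^{\infty}(Q_{\tau,T})}\, h^{1/2}.
\end{equation*}
Finally I would minimize the sum $H h^{-(1-\gamma)/2} + \|\psi\|_{L^{\infty}(Q_{\tau,T})}\,h^{1/2}$ by choosing $h \sim \bigl(H/\|\psi\|_{L^{\infty}(Q_{\tau,T})}\bigr)^{2/(2-\gamma)}$, which produces the stated bound with exponents $1/(2-\gamma)$ on $H$ and $(1-\gamma)/(2-\gamma)$ on $\|\psi\|_{L^{\infty}(Q_{\tau,T})}$. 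In the degenerate case $t-\tau$ smaller than the optimal $h$, the plain Duhamel representation already delivers $C\|u_0\|_{C^1(\Omega)}+C(t-\tau)^{1/2}\|\psi\|_{L^{\infty}(Q_{\tau,T})}$, which is absorbed into the final estimate.

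The main obstacle is property (ii): obtaining sharp Gaussian gradient bounds for the Neumann heat kernel uniformly up to $\partial\Omega$. On smooth bounded $\Omega$ this follows from a parametrix construction (or a reflection argument in simple geometries), and in dimension one on $\Omega=(0,L)$ the explicit cosine series representation of $G$ makes (ii) essentially elementary; apart from this technical input the rest is a direct computation and balancing argument.
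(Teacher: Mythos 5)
The paper does not actually prove this lemma: it is imported verbatim from \cite{fellner2019uniform} (and is essentially the same statement as Lemma \ref{lem2.3}, quoted from \cite{kanel1990solvability,fellner2020global}), so there is no internal proof to compare against. Your argument is, in substance, the proof given in those references: Duhamel restarted at time $t-h$, the normalization $\int_\Omega G(x,y,h;d)\,dy=1$ used to insert the cancellation $u(y,t-h)-u(x,t-h)$, the Gaussian gradient bound converting the H\"older hypothesis into $C\,H\,h^{-(1-\gamma)/2}$ and the forcing term into $C\|\psi\|_{L^\infty}h^{1/2}$, and optimization in $h$; the exponents $1/(2-\gamma)$ and $(1-\gamma)/(2-\gamma)$ come out correctly. (The inequality in the hypothesis should of course read $\le H|x-x'|^{\gamma}$; you have used it that way.) Two small points of care: besides the constraint $h\le t-\tau$, which you address, your kernel bound (ii) is stated only for $h\le 1$, while the optimal $h\sim\bigl(H/\|\psi\|_{L^\infty}\bigr)^{2/(2-\gamma)}$ can exceed $1$ when $H\gg\|\psi\|_{L^\infty}$; for the Neumann kernel this is repaired by the long-time decay $|\nabla_xG(x,y,h;d)|\le Ce^{-\lambda h}$ for $h\ge 1$, which should be recorded alongside (i) and (ii). It is also worth noting that for the Dirichlet analogue (Lemma \ref{lem-Diri}) the paper deliberately avoids this kernel machinery and instead uses $L^p$ maximal regularity plus interpolation between $C^{\gamma}(\Omega)$ and $W^{2-\delta,\infty}(\Omega)$; that route is technically lighter (no Green's-function gradient estimates) but loses a $\delta>0$ in the exponents and produces a constant depending on $T-\tau$, whereas the representation argument you follow gives the sharp exponent $\frac{1}{2-\gamma}$ with a time-independent constant --- exactly the trade-off described in the remark following Lemma \ref{lem-Diri}.
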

\begin{remark}
  \blue{It is remarked that Lemma \ref{from_fmt} looks similar to Lemma \ref{lem2.3}, except for the fact that it is considered in the cylinder $\Omega\times(\tau,T)$. Due to $t$-dependence of $\psi$, the problem is non-autonomous. Nevertheless, the proof is in fact similar to that of Lemma \ref{lem2.3}.}
\end{remark}

\begin{lemma}\label{new_lem1}
	\blue{Assume \eqref{A1} and \eqref{A2}.} There exists a constant $C>0$ such that
	\begin{equation*}
		\sup_{\tau\in\N} \left(\|v\|_{L^{\infty}(Q_{\tau,\tau+2})} + \|z\|_{L^\infty(Q_{\tau,\tau+2})}\right) \leq C.
	\end{equation*}	
\end{lemma}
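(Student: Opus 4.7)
The strategy is to combine mass conservation (from $g\equiv 0$), an energy estimate for $v$ tested against itself, the one-dimensional embedding $W^{1,1}(\Omega)\hookrightarrow L^\infty(\Omega)$, and a shifted-cutoff self-improvement trick that converts the $L^\infty$ bound on $v$ into a uniform pointwise bound on unit-interval time averages of $\sumi u_i$.

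First, summing \eqref{eq1.1} over $i$, integrating over $\Omega$ and invoking the Neumann condition together with $\sumi f_i=g\equiv 0$ yields the conservation $\sumi\intO u_i(x,t)\,dx\equiv M_0:=\sumi\intO u_{i,0}$. Non-negativity of $u_i$ then gives $\|u_i(t)\|_{\LO{1}}\leq M_0$ uniformly in $t\geq 0$, so that using $0\leq\vpt\leq 1$ in the definitions \eqref{def_v_tau} and \eqref{eq_v_tau1} one obtains $\|v(t)\|_{\LO{1}}+\|z(t)\|_{\LO{1}}\leq C$ uniformly in $\tau\in\N$ and $t\in(\tau,\tau+2)$. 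Next, testing \eqref{eq_v_tau1} against $v$, integrating by parts (the Neumann condition kills the boundary term), and using $w_i,v\geq 0$ gives
\begin{equation*}
\intO|\pa_x v(t)|^2\,dx \;\leq\; \intO z(t)\,v(t)\,dx \;\leq\; \|z(t)\|_{\LO{1}}\|v(t)\|_{\LO{\infty}}.
\end{equation*}
The one-dimensional embedding $\|v\|_{\LO{\infty}}\leq C\bra{\|v\|_{\LO{1}}+\|\pa_x v\|_{\LO{2}}}$ combined with the uniform $L^1$ bound and Young's inequality then closes the estimate and produces $\|v\|_{L^\infty(Q_{\tau,\tau+2})}\leq C$ independently of $\tau$.

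For the $L^\infty$ bound on $z$, the key is to apply the previous step to the cutoff shifted to $\tau':=\tau-1$ (for $\tau\geq 1$; the case $\tau=0$ is handled directly using the bounded initial data and local existence on $[0,2]$). Writing $\varphi_{\tau-1}$ and $v_{\tau-1}$ for the analogous objects built at $\tau-1$, the property $\varphi_{\tau-1}(s)\equiv 1$ on $[\tau,\infty)$ implies
\begin{equation*}
v_{\tau-1}(x,\tau+1) \;=\; \int_{\tau-1}^{\tau+1}\sumi d_i\varphi_{\tau-1}(s)u_i(x,s)\,ds \;\geq\; \bra{\min_i d_i}\int_\tau^{\tau+1}\sumi u_i(x,s)\,ds,
\end{equation*}
whose left-hand side is uniformly bounded by Step 2 applied to $v_{\tau-1}$. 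Hence $\int_\tau^{\tau+1}\sumi u_i(x,s)\,ds\leq C$ uniformly in $x\in\Omega$ and $\tau\geq 1$. Choosing $\vpt$ as a fixed profile translated by $\tau$ ensures $\|\vpt'\|_\infty\leq C_\vpt$ independent of $\tau$, and since $\vpt'\geq 0$ is supported in $[\tau,\tau+1]$, we conclude
\begin{equation*}
0\;\leq\; z(x,t) \;=\; \sumi\int_\tau^t \vpt'(s)u_i(x,s)\,ds \;\leq\; C_\vpt\int_\tau^{\tau+1}\sumi u_i(x,s)\,ds \;\leq\; C.
\end{equation*}

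The main obstacle is the apparent circularity in the last step: the direct identity $z=\sumi w_i-\pa_{xx}v$ from \eqref{eq_v_tau1} would demand an $L^\infty$ bound on $\sumi u_i$, which is precisely the ultimate goal of the whole uniform-in-time analysis. The shifted-cutoff trick breaks this loop by trading the $L^\infty$ bound on $v_{\tau-1}$ (available because its construction already began one unit earlier) for a uniform pointwise bound on the unit-interval time average of the concentrations on $[\tau,\tau+1]$.
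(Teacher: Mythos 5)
Your proof is correct, but it is genuinely more self-contained than the paper's. The paper disposes of the bound on $v$ by citing \cite[Lemma 3.3]{fellner2019uniform} and then asserts that the bound on $z$ ``follows immediately'' from the definition in \eqref{eq_v_tau1}; you instead reprove the $v$-bound from scratch (mass conservation under $g\equiv 0$, testing $\sumi w_i-\pa_{xx}v=z$ against $v$, the one-dimensional embedding, and absorption via Young), which is essentially the same energy argument the paper itself uses to prove \eqref{Linfty-v} in Lemma \ref{lem3}, so that part is a faithful, valid substitute for the citation. Where you genuinely diverge is the $z$-bound: you correctly observe that $z(x,t)=\sumi\int_\tau^t\vpt'(s)u_i(x,s)\,ds$ is \emph{not} pointwise dominated by the cutoff-weighted quantity $v(x,t)=\int_\tau^t\sumi d_i\vpt(s)u_i(x,s)\,ds$, since no admissible cutoff can satisfy $\vpt'\lesssim\vpt$ on the support of $\vpt'$ (that would force $\vpt\equiv 0$ by Gronwall). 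Your shifted-cutoff device --- bounding $v_{\tau-1}(x,\tau+1)\geq(\min_i d_i)\int_\tau^{\tau+1}\sumi u_i(x,s)\,ds$ because $\varphi_{\tau-1}\equiv 1$ on $[\tau,\infty)$ --- cleanly converts the uniform $v$-bound into a uniform pointwise bound on the unweighted time averages, which is exactly what $z$ needs; the $\tau=0$ case is harmless since global existence is already established at this stage, so $\|u_i\|_{L^\infty(Q_{0,2})}$ is a single finite constant. What your route buys is independence from the external reference and an explicit justification of a step the paper leaves implicit (the cited lemma in \cite{fellner2019uniform} in fact controls the un-cutoff time integral, which is why the paper can call the $z$-bound immediate); what it costs is about a page of argument that the paper compresses into two sentences.
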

\begin{proof}
	The bound of $v$ was proved in \cite[Lemma 3.3]{fellner2021uniform}. The bound of $z$ follows immediately due to its definition in \eqref{eq_v_tau1} and the bound of $v$.
\end{proof}

\begin{lemma}\label{new_lem2}
	\blue{Assume \eqref{A1} and \eqref{A2}.}  There exists a constant $C>0$ such that
	\begin{equation}\label{v_x_tau_bounded}
		\sup_{\tau\in \N}\|\pa_x v\|_{L^{\infty}(Q_{\tau,\tau+2})} \leq C.
	\end{equation}
\end{lemma}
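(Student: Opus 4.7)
The plan is to reduce the claim to a direct application of Lemma \ref{lem-diff} on the time interval $(\tau,\tau+2)$, whose length is fixed at $2$ independently of $\tau$. This is exactly the uniform-in-time analogue of the $\LQ{\infty}$-gradient bound for $\partial_x v$ that was used inside the proof of Lemma \ref{lem3}.

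First I would rewrite \eqref{eq_v_tau2} in the form required by Lemma \ref{lem-diff}. Setting $a(x,t):=1/b(x,t)$, the equation becomes
\begin{equation*}
\frac{1}{a(x,t)}\pa_t v - \pa_{xx} v = z(x,t), \qquad (x,t)\in Q_{\tau,\tau+2},
\end{equation*}
with homogeneous Neumann boundary conditions and zero initial data at $t=\tau$. From \eqref{def_b}, $a$ satisfies
\begin{equation*}
\min_{i}\{d_i\}\leq a(x,t)\leq \max_i\{d_i\},
\end{equation*}
so the uniform ellipticity assumption \eqref{eq-a} of Lemma \ref{lem-diff} holds with a constant $\alpha>0$ that depends only on the diffusion coefficients $d_i$ (in particular, not on $\tau$).

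Next I would verify the remaining hypotheses of Lemma \ref{lem-diff}. Since $\vpt\geq 0$ and $u_i\geq 0$, the functions $w_i=\vpt u_i$ are non-negative, so from \eqref{def_v_tau} we see that $v\geq 0$ and
\begin{equation*}
\pa_t v(x,t) = \sumi d_i w_i(x,t) \geq 0.
\end{equation*}
Lemma \ref{new_lem1} provides the uniform bounds
\begin{equation*}
\sup_{\tau\in\N}\|v\|_{L^{\infty}(Q_{\tau,\tau+2})} + \sup_{\tau\in\N}\|z\|_{L^{\infty}(Q_{\tau,\tau+2})} \leq C,
\end{equation*}
so $v\in L^{\infty}(Q_{\tau,\tau+2})$ and the forcing $z$ is uniformly $L^\infty$-bounded.

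Applying Lemma \ref{lem-diff} to the equation above with the choices $f=z$ and length $T-\tau=2$ then yields
\begin{equation*}
\|\pa_x v\|_{L^{\infty}(Q_{\tau,\tau+2})} \leq C\!\left(2,\alpha,\|z\|_{L^{\infty}(Q_{\tau,\tau+2})}, \|v\|_{L^{\infty}(Q_{\tau,\tau+2})}\right),
\end{equation*}
where the constant on the right depends only on the fixed interval length $2$, on the ellipticity constant $\alpha$ (both independent of $\tau$), and on the uniform bounds from Lemma \ref{new_lem1}. Taking the supremum over $\tau\in\N$ gives \eqref{v_x_tau_bounded}. There is no real obstacle here: the only subtlety is making sure that every ingredient in the constant produced by Lemma \ref{lem-diff} is independent of $\tau$, which is precisely what Lemma \ref{new_lem1} and the fact that the time window has fixed length $2$ guarantee.
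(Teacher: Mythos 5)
Your proof is correct and follows exactly the paper's argument: apply Lemma \ref{lem-diff} to \eqref{eq_v_tau2} on the fixed-length window $(\tau,\tau+2)$ and invoke Lemma \ref{new_lem1} to see that the resulting constant is independent of $\tau$. You are somewhat more careful than the paper in explicitly verifying the hypotheses $v\geq 0$, $\pa_t v\geq 0$ and the uniform ellipticity bounds on $a=1/b$, but this is the same proof.
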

\begin{proof}
	To prove this lemma, we apply Lemma \ref{lem-diff} to the equation \eqref{eq_v_tau2} on the interval $(\tau,\tau+2)$. Then, it holds
	\begin{equation*}
		\|\pa_x v\|_{L^{\infty}(Q_{\tau,\tau+2})} \leq \mathscr{C}
	\end{equation*}
	where the constant $\mathscr{C}$ depends only on $d_i$ and the bounds $\|z\|_{L^{\infty}(Q_{\tau,\tau+2})}$ and $\|v\|_{L^{\infty}(Q_{\tau,\tau+2})}$. Since these last two quantities are independent of $\tau\in \N$, the desired estimate \eqref{v_x_tau_bounded} follows.
\end{proof}

\begin{lemma}\label{new_lem3}
	\blue{Assume \eqref{A1} and \eqref{A2}.}  There exist $\beta \in (0,1)$ and $H>0$, which is {\normalfont independent of $\tau\in \N$}, such that
	\begin{equation*}
		|\pa_x v(x,t) - \pa_xv(x',t)| \leq H|x-x'|^{\beta}, \quad \forall (x,t), (x',t)\in Q_{\tau,\tau+2}.
	\end{equation*}
\end{lemma}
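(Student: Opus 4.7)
The plan is to repeat essentially the argument of Lemma \ref{lem3} on the time interval $(\tau,\tau+2)$, but applied to \eqref{eq_v_tau2}, and then check carefully that the resulting H\"older constant does not depend on $\tau$. Setting $a(x,t):=1/b(x,t)$, which by \eqref{def_b} satisfies $\min_i d_i \leq a(x,t) \leq \max_i d_i$, multiplying \eqref{eq_v_tau2} by $a$ yields
\begin{equation*}
   \pa_t v - a(x,t)\pa_{xx} v = a(x,t)\,z(x,t) \quad \text{in } Q_{\tau,\tau+2}.
\end{equation*}
Differentiating in $x$ and setting $w:=\pa_x v$, we obtain
\begin{equation*}
   \pa_t w - \pa_x\bra{a\,\pa_x w} = \pa_x\bra{a\,z} \quad \text{in } Q_{\tau,\tau+2},
\end{equation*}
with Dirichlet boundary conditions $w(0,t)=w(L,t)=0$ (inherited from the Neumann condition $\pa_x v(0,t)=\pa_x v(L,t)=0$) and zero initial condition $w(x,\tau)=\pa_x v(x,\tau)=0$ (since $v(x,\tau)\equiv 0$).

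This is exactly the setting of Lemma \ref{key_lem2}, with the coefficient $a$ (ellipticity constant $\min_i d_i$) and source in divergence form $\pa_x f$, $f=a\,z$. The lemma produces an exponent $\delta\in(0,1)$ and a constant $K_0$ depending only on $\|w\|_{L^\infty(Q_{\tau,\tau+2})}$, $\|a\,z\|_{L^\infty(Q_{\tau,\tau+2})}$ and $\min_i d_i$, such that
\begin{equation*}
   |w(x,t)-w(x',t')| \leq K_0\bra{|x-x'|^\delta + |t-t'|^{\delta/2}} \quad \forall (x,t),(x',t')\in Q_{\tau,\tau+2}.
\end{equation*}
Restricting to $t=t'$ gives the spatial H\"older estimate asserted in the lemma, with $\beta:=\delta$ and $H:=K_0$.

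The main point to verify, and the only real obstacle, is that $K_0$ can be chosen independently of $\tau\in\N$. The $L^\infty$ bound on $w=\pa_x v$ over $Q_{\tau,\tau+2}$ is uniform in $\tau$ by Lemma \ref{new_lem2}; the bound on $\|z\|_{L^\infty(Q_{\tau,\tau+2})}$ is uniform in $\tau$ by Lemma \ref{new_lem1}; and $\|a\|_{L^\infty} \leq \max_i d_i$ is $\tau$-independent. Although the proof of Lemma \ref{key_lem2} uses a constant denoted $C_{T,\tau}$, inspection of its definition and of the subsequent application of \cite[Ch.~2, \S 7, Theorem 7.1]{Ladyzenskaja1968Linear} shows that it depends only on $\|f\|_{L^\infty}$ and the ellipticity constant, not on the length $T-\tau$ of the time interval. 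Consequently $K_0$ is uniform in $\tau\in\N$, which completes the proof.
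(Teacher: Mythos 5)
Your proof is correct and follows essentially the same route as the paper: divide (or equivalently multiply by $a=b^{-1}$) the equation \eqref{eq_v_tau2}, differentiate in $x$ to put $w=\pa_x v$ into the divergence-form setting of Lemma \ref{key_lem2}, and then observe that the resulting H\"older constant depends only on $\|\pa_x v\|_{L^\infty(Q_{\tau,\tau+2})}$, $\|b^{-1}z\|_{L^\infty(Q_{\tau,\tau+2})}$ and the ellipticity bounds, all of which are uniform in $\tau$ by Lemmas \ref{new_lem1} and \ref{new_lem2}. Your extra check that the constant in the proof of Lemma \ref{key_lem2} does not degrade with the length of the time interval is a welcome precision but does not change the argument.
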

\begin{proof}
	By dividing both sides of \eqref{eq_v_tau2} by $b(x,t)$ then differentiating with respect to $x$, we obtain
	\begin{equation*}
		\begin{cases}
			\pa_t (\pa_x v) - \pa_x(b(x,t)^{-1}\pa_x(\pa_xv)) = \pa_x(b(x,t)^{-1}z(x,t)), &(x,t)\in Q_{\tau,\tau+2},\\
			\pa_xv(0,t) = \pa_xv(L,t) = 0, &t\in (\tau,\tau+2),\\
			\pa_xv(x,0) = 0, &x\in\Omega.
		\end{cases}
	\end{equation*}
	Now we can apply Lemma \ref{key_lem2} to this equation (with $w$ is equal to $\pa_x v$) to obtain the H\"older continuity
	\begin{equation*}
		|\pa_x v(x,t) - \pa_xv(x',t)| \leq H|x-x'|^{\beta}, \quad \forall (x,t), (x',t)\in Q_{\tau,\tau+2}
	\end{equation*}
	where we notice that the constant $H$ depends on $\|\pa_xv\|_{L^{\infty}(Q_{\tau,\tau+2})}$ and $\|b^{-1}z\|_{L^{\infty}(Q_{\tau,\tau+2})}$, which are in turn \textit{independent of $\tau\in \N$}, thanks to Lemmas \ref{new_lem1} and \ref{new_lem2}.
\end{proof}

We also need the following elementary lemma, whose proof is straightforward.
\begin{lemma}\label{lem2.9}
  Let $\{y_n\}_{n\geq0}$ be a nonnegative sequence. Define $\mathcal{N}=\{n\in \mathbb{N} :y_{n-1}\leq y_n\}.$ If there exists $c_0>0$ such that
  $$y_n\leq c_0 \quad\text{for all}\quad n\in\mathcal{ N},$$
  then $$y_n\leq c=\max\{y_0,c_0\},\quad \text{for all} \quad n\in \mathbb{N},$$
  where a constant $c$ independent of $n$.
\end{lemma}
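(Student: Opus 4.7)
The plan is to prove this by induction on $n$, exploiting the natural dichotomy that for each $n\geq 1$ either $n\in\mathcal N$ (so the hypothesis directly bounds $y_n$) or $n\notin\mathcal N$ (so $y_n<y_{n-1}$ and the induction hypothesis applies). Set $c:=\max\{y_0,c_0\}$.

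The base case $n=0$ is immediate: $y_0\leq\max\{y_0,c_0\}=c$. For the induction step, I assume $y_k\leq c$ for all $0\leq k\leq n-1$ and split into two cases. If $n\in\mathcal N$, then by the standing hypothesis $y_n\leq c_0\leq c$. If $n\notin\mathcal N$, then by the definition of $\mathcal N$ we have $y_n<y_{n-1}$, and the induction hypothesis gives $y_{n-1}\leq c$, so $y_n<c$. In either case $y_n\leq c$, completing the induction.

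I do not anticipate any real obstacle: the statement is purely combinatorial and the case split on whether $n\in\mathcal N$ is exhaustive by the very definition of $\mathcal N$. The only subtlety is making sure the base case absorbs initial values of $y_0$ that may already exceed $c_0$, which is precisely why $c$ is defined as the maximum of $y_0$ and $c_0$ rather than just $c_0$.
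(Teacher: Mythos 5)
Your induction with the case split on whether $n\in\mathcal N$ is correct and is exactly the "straightforward" argument the paper has in mind (the paper omits the proof of Lemma \ref{lem2.9} entirely, calling it elementary). No issues.
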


\begin{proof}[\textbf{Proof of Theorem \ref{th1.2} - Uniform-in-time bounds}]
	By differentiating \eqref{eq_v_tau3} with respect to $x$ we have
	\begin{equation}\label{eq_v_tau4}
	\begin{cases}
		\pa_t(\pa_x v) - \pa_{xx}(\pa_xv) = \pa_x \Phi, &(x,t)\in Q_{\tau,\tau+2},\\
		\pa_xv(0,t) = \pa_x(L,t) = 0, &t\in (\tau,\tau+2),\\
		\pa_xv(x,\tau) = 0, &x\in\Omega.
	\end{cases}
	\end{equation}
	\blue{By applying Lemmas} \ref{new_lem3} and \ref{lem-Diri} to \eqref{eq_v_tau4} we have, for any $0<\delta<2-\beta$,
	\begin{equation}\label{new1}
		\begin{aligned}
		\sup_{Q_{\tau,\tau+2}}|\pa_{xx}v| & \leq CH^{\frac{1-\delta}{2-\beta-\delta}}\left[\sup_{Q_{\tau,\tau+2}}|\pa_x \Phi|\right]^{\frac{1-\beta}{2-\beta-\delta}}\\
		&\leq C\left[ \sup_{Q_{\tau,\tau+2}}\sumi|\pa_x w_i| + |\pa_x z| \right]^{\frac{1-\beta}{2-\beta-\delta}}.
		\end{aligned}
	\end{equation}
	Define
	\begin{equation*}
		U:= \sup_{Q_{\tau,\tau+2}}\sup_{i=1,\ldots, m}|u_i(x,t)|.
	\end{equation*}	
	By applying Lemma \ref{from_fmt} to the equation \eqref{eq_w_tau} and using the growth \eqref{A3}, we have
	\begin{equation}\label{new2}
		\begin{aligned}
			\sup_{Q_{\tau,\tau+2}}|\pa_x w_i| &\leq C\sup_{Q_{\tau,\tau+2}}|w_i|^{1/2}\left[\sup_{Q_{\tau,\tau+2}}\bra{|\vpt' u_i| + |\vpt f_i(x,t,u)|} \right]^{1/2}\\
			&\le CU^{1/2}\left[U + 1 + U^{3+\eps}\right]^{1/2}\\
			&\leq C\sbra{1 + U^{2+\frac\eps 2}}.
		\end{aligned}
	\end{equation}
	By integrating \eqref{eq1.1} on $(\tau,t)$ and denoting $y_i(x,t) = \int_{\tau}^tu_i(x,s)$ we have
	\begin{equation}\label{new3}
	\begin{cases}
		\pa_t y_i - d_i\pa_{xx}y_i = \int_{\tau}^{t}f_i(x,s,u)ds, &(x,t)\in Q_{\tau,\tau+2},\\
		\pa_xy_i(0,t) = \pa_xy_i(L,t) = 0, &t\in (\tau,\tau+2),\\
		y_i(x,\tau) = 0, &x\in\Omega.
	\end{cases}
	\end{equation}
	Using \eqref{A3} and Lemma \ref{new_lem1} we can estimate
	\begin{align*}
		\sup_{(x,t)\in Q_{\tau,\tau+2}}\left|\int_{\tau}^{t}f_i(x,s,u)ds \right| &\leq C\sup_{Q_{\tau,\tau+2}}\int_{\tau}^{\tau}\sbra{1+\sumi u_i(x,s)}\sbra{1 + \sumi u_{i}(x,s)}^{2+\eps}ds\\
		&\leq C\bra{1 + U}^{2+\eps}\sup_{Q_{\tau,\tau+2}}\int_{\tau}^{\tau+2}\sbra{1+\sumi u_i(x,s)}ds\\
		&\leq C\bra{1+U}^{2+\eps}.
	\end{align*}
	Now, we can apply Lemma \ref{from_fmt} to \eqref{new3} to have
	\begin{align}\label{new4}
		\sup_{Q_{\tau,\tau+2}}|\pa_x y_i| \leq C\sbra{\sup_{Q_{\tau,\tau+2}}|y_i|}^{1/2}\sbra{\sup_{Q_{\tau,\tau+2}}\left|\int_{\tau}^t f_i(x,s,u)ds\right|}^{1/2} \leq C\bra{1+U}^{1+\frac{\eps}{2}}
	\end{align}
	where we used $\sup_{Q_{\tau,\tau+2}}|y_i| \leq C$ at the last step thanks to Lemma \ref{new_lem1}. From the definition of $z$ in \eqref{eq_v_tau1} and \eqref{new4} it follows that
	\begin{equation*}
		\sup_{Q_{\tau,\tau+2}}|\pa_x z| \leq C\sbra{1+U^{1+\frac{\eps}{2}}}.
	\end{equation*}
	Inserting this and \eqref{new2} into \eqref{new1} yields
	\begin{equation*}
		\sup_{Q_{\tau,\tau+2}}|\pa_{xx}v| \leq 	C\sbra{1 + U^{2+\frac{\eps}{2}}}^{\frac{1-\beta}{2-\beta-\delta}} \leq C\sbra{1 + U^{\bra{2+\frac\eps 2}\frac{1-\beta}{2-\beta-\delta}}}.
	\end{equation*}
	From this and \eqref{eq_v_tau1}, it implies
	\begin{equation*}
		\sup_{Q_{\tau,\tau+2}}\sup_{i=1,\ldots, m}|w_i(x,t)| \leq \sup_{Q_{\tau,\tau+2}}|\pa_{xx}v| + C\sup_{Q_{\tau,\tau+2}}|y_i| \le C\sbra{1 + U^{\bra{2+\frac\eps 2}\frac{1-\beta}{2-\beta-\delta}}}.
	\end{equation*}
	We consider the set
	\begin{equation*}
		\mathscr{N}:= \left\{\tau \in \N:\,
		\sup_{Q_{\tau,\tau+1}}\sup_{i=1,\ldots, m}|u_i(x,t)| \leq \sup_{Q_{\tau+1,\tau+2}}\sup_{i=1,\ldots, m}|u_i(x,t)|\right\}.
	\end{equation*}
	Now for all $\tau\in \mathscr{N}$, we use $\vpt \geq 0$ and $w_i(x,t) = u_i(x,t)$ for all $(x,t)\in Q_{\tau+1,\tau+2}$ to have
	\begin{equation}\label{new5}
		U \leq 2\sup_{Q_{\tau+1,\tau+2}}\sup_{i=1,\ldots, m}|u_i(x,t)| \leq C\sbra{1 + U^{\bra{2+\frac\eps 2}\frac{1-\beta}{2-\beta-\delta}}}.
	\end{equation}
	Therefore, for $\eps>0$ and $\delta>0$ small enough,
	\begin{equation*}
		\bra{2+\frac\eps 2}\frac{1-\beta}{2-\beta-\delta} < 1,
	\end{equation*}
	and consequently, \eqref{new5} and Young's inequality imply
	\begin{equation}\label{new6}
		\sup_{Q_{\tau,\tau+2}}\sup_{i=1,\ldots, m}|u_i(x,t)| = U \leq C \quad \forall \tau\in \mathscr{N},
	\end{equation}
	where $C$ is \textit{independent of $\tau$}. Finally, by applying Lemma \ref{lem2.9}, the estimate \eqref{new6} is true for all $\tau\in \N$, and the proof of uniform-in-time bounds of Theorem \ref{th1.2} is finished.
\end{proof}

\section{Proof of Theorem \ref{th1.1}}\label{sub2.2}
\begin{proof}[Proof of Theorem \ref{th1.1}]
Similar to Theorem \ref{th1.2}, the global existence follows if one can show that
\begin{equation*}
	\sup_{i=1,\ldots, m}\sup_{T\in (0,T_{\max})}\|u_i\|_{\LQ{\infty}} < +\infty,
\end{equation*}
where $(0,T_{\max})$ is the maximal interval of the local strong solution.

\medskip
\blue{Assume for contradiction that $T_{\max}<\infty$.
We follow the idea from \cite{fellner2020global} which states that} with a suitable change of unknowns, and especially adding one more appropriate equation, we can transform a system with the mass control condition \eqref{A2} into a system with condition \eqref{A2'}, that keeps the essential features \eqref{A1} and \eqref{A3}.

\medskip
In order to do that, we define
\begin{equation*}
  \begin{split}
    w_i(x,t)=e^{-k_1t}u_i(x,t) \quad \text{or equivalently}  \quad u_i(x,t)=e^{k_1t}w_i(x,t).
  \end{split}
\end{equation*}

Direct computations give
\begin{equation*}
  \begin{split}
    \partial_tw_i&=e^{-k_1t}(\partial_tu_i-k_1u_i)\\
    &=e^{-k_1t}(d_i\pa_{xx} u_i+f_i(x,t,u)-k_1u_i)\\
    &=d_i\pa_{xx} w_i+g_i(x,t,w),
  \end{split}
\end{equation*}
where
\begin{equation}\label{eq-def-g}
  g_i(x,t,w)=e^{-k_1t}(f_i(x,t,u)-k_1e^{k_1t}w_i).
\end{equation}

Note that
\begin{equation*}
  \begin{split}
    \sum^m_{i=1}g_i(x,t,w)=e^{-k_1t}\sum^m_{i=1}(f_i(x,t,u)-k_1u_i)\leq e^{-k_1t}k_0
  \end{split}
\end{equation*}
due to the assumption \eqref{A2}. Introduce a new unknown $w_{m+1}:\Omega\times(0, T_{\max})\rightarrow\mathbb{R}_{+}$ which solves
\begin{equation}\label{eq-m+1}
  \begin{split}
    \partial_tw_{m+1}-\pa_{xx} w_{m+1}=k_0e^{-k_1t}-\sum^m_{i=1}g_i(x,t,w)=:g_{m+1}(x,t,w)\geq0
  \end{split}
\end{equation}
with homogeneous Neumann boundary condition $\pa_x w_{m+1}(0,t) = \pa_x w_{m+1}(L,t)=0$ and zero initial data $w_{m+1}(x,0)=w_{m+1,0}(x)=0$ for $x\in\Omega$. With a slight abuse of notation we write the new vector of concentrations $\widetilde{w}=(w_1, \ldots, w_m, w_{m+1})$ and the nonlinearities $g_i(x,t,\widetilde{w}):=g_i(x,t,w_1, \ldots, w_m)$ for all $i=1,\ldots, m$ while $g_{m+1}(x,t,\widetilde{w})=k_0e^{-k_1t}-\sum^m_{i=1}g_i(x,t,w)$. We have arrived at the following system

\begin{equation}\label{eq-w}
  \begin{cases}
    \partial_{t}w_i-d_i\partial_{xx}w_i=g_i(x,t,\widetilde{w}), &(x,t)\in \Omega\times(0,T_{\max}),~ i=1,2,\cdots,m+1,\\
    \partial_xw_i(0,t)=\partial_xw_i(L,t)=0,    &t>0,\;i=1,2,\cdots,m+1,\\
    w_i(x,0)=u_{i,0}(x),&x\in\Omega,~i=1,2,\cdots,m,\\
    w_{m+1}(x,0)=w_{m+1,0}(x)=0, &x\in\Omega,
  \end{cases}
\end{equation}
\blue{where $d_{m+1}=1$.}
It's obvious to check that the nonlinearities $g_i, i=1,\cdots,m+1$ satisfy the assumption \eqref{A1}. Moreover, due to the definition $w_i(x, t)=e^{-k_1t}u_i(x, t)$, it follows from \eqref{A3} and \eqref{eq-def-g} the growth control
\begin{equation}\label{eq-g_i}
  \begin{split}
    |g_i(x,t,\widetilde{w})|&\leq e^{-k_1t}(|f_i(x,t,u)|+k_1e^{k_1t}|w_i|)\\
    &\leq e^{-k_1t}(C(1+|u|^{3+\varepsilon})+k_1e^{k_1t}|w_i|)\\
    &\leq Ce^{(2+\varepsilon)k_1T_{\max}}(1+|\widetilde{w}|^{3+\varepsilon}).
  \end{split}
\end{equation}
Moreover, the nonlinearities of \eqref{eq-w} satisfies the condition \eqref{A2'}, i.e.
\begin{equation}\label{new7}
    \sum^{m+1}_{i=1}g_i(x,t,\widetilde{w})=k_0e^{-k_1t}
\end{equation}
thanks to \eqref{eq-m+1}. Now we can apply the results of Theorem \ref{th1.2} to get that \eqref{eq-w} has a global classical solution $\widetilde{w}$. Changing back to the original unknowns $u_i(x, t) =e^{k_1t}w_i(x, t)$ for $i=1,\cdots, N$, we obtain finally the global existence of classical solution to \eqref{eq1.1}.

\medskip
If, additionally, $k_0 = k_1 = 0$, the condition \eqref{new7} becomes
\begin{equation*}
	\sum_{i=1}^{m+1}g_i(x,t,\widetilde{w}) = 0.
\end{equation*}
Therefore, Theorem \ref{th1.2} implies that
\begin{equation*}
	\sup_{i=1,\ldots, m}\sup_{t\geq 0}\|w_i(t)\|_{\LO{\infty}} < +\infty,
\end{equation*}
which in turn gives, since with $k_1 = 0$, $w_i(x,t) = u_i(x,t)$, the uniform-in-time bound of solutions to \eqref{eq1.1}. The proof of Theorem \ref{th1.1} is, therefore, finished.
\end{proof}

\section{Cubic intermediate sum condition}\label{sec3}
\subsection{Constant diffusion coefficients - Proof of Theorem \ref{th1.3}}\label{sub3.1}

We first show that the entropy dissipation condition \eqref{E} implies the boundedness of solution in \blue{$L\log L(\Omega):=\{u: \Omega\to\mathbb{R}, ~\int_{\Omega}|u|\log|u|\,dx <+\infty\}$} and consequently in $L^1(\Omega)$.
\begin{lemma}\label{lem-1}
	Assume \eqref{E}. Then, there exists constant $C>0$ depending only on $\|u_0\|_{\LO{\infty}}$ and $|\Omega|$, such that for any \blue{$T\in(0, T_{\max})$,}
	\begin{equation*}
		\|(u_i\log u_i)(t)\|_{\LO{1}} \leq C\bra{1+e^{CT}} \quad \text{ for all } \quad t\in [0,T).
	\end{equation*}
	As a consequence,
	\begin{equation*}
		\|u_i(t)\|_{\LO{1}} \leq C\bra{1+e^{CT}} \quad \text{ for all } \quad t\in [0,T).
	\end{equation*}	
\end{lemma}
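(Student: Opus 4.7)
The natural approach is to multiply the $i$-th equation of \eqref{eq1.1} by $\log u_i + \mu_i$ and integrate over $\Omega$. Using the smoothing effect we may assume $u_i$ is strictly positive for $t > 0$ (or regularize by $\log(u_i + \delta)$ and pass $\delta \to 0^+$ at the end). Integration by parts against the homogeneous Neumann boundary condition yields
\begin{equation*}
\frac{d}{dt}\int_\Omega u_i(\log u_i + \mu_i - 1)\,dx + d_i\int_\Omega \frac{|\partial_x u_i|^2}{u_i}\,dx = \int_\Omega f_i(x,t,u)(\log u_i + \mu_i)\,dx,
\end{equation*}
where we used $\partial_t u_i\cdot \log u_i = \partial_t(u_i\log u_i - u_i)$. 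Summing over $i$ and dropping the nonnegative Fisher-information term on the left, the entropy inequality \eqref{E} controls the right-hand side and produces
\begin{equation*}
\frac{d}{dt}\mathcal{E}(t) \leq k_2\,\mathcal{E}(t) + k_3|\Omega|,\qquad \mathcal{E}(t):=\sumi\intO u_i(\log u_i + \mu_i - 1)\,dx.
\end{equation*}
Since $u_{i,0}\in L^\infty(\Omega)$, $\mathcal{E}(0)$ is finite, and Gronwall's lemma gives $\mathcal{E}(t) \leq C(1 + e^{CT})$ on $[0,T)$.

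It remains to convert the bound on $\mathcal{E}(t)$ into a bound on $\|u_i\log u_i\|_{\LO{1}}$. The obstacle I expect to be the only nontrivial step is that the constants $\mu_i$ are unsigned, so the linear terms $(\mu_i - 1)\int u_i\,dx$ appearing inside $\mathcal{E}$ could a priori dominate and hide the $L\log L$ information. I would handle this via the nonnegative relative entropy $\psi(s) := s\log s - s + 1 \geq 0$, writing
\begin{equation*}
\mathcal{E}(t) = \sumi\intO \psi(u_i)\,dx + \sumi\mu_i \intO u_i\,dx - m|\Omega|.
\end{equation*}
The key ingredient is the superlinearity of $\psi$: for every $\epsilon>0$ there exists $C_\epsilon$ such that $s \leq \epsilon\,\psi(s) + C_\epsilon$ for all $s \geq 0$ (because $s\log s \geq (2/\epsilon)\,s$ once $s\geq e^{2/\epsilon}$, while $s$ is trivially bounded otherwise).

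Using this estimate with $\epsilon$ chosen so that $\epsilon \max_i|\mu_i| \leq 1/2$ allows us to absorb the linear terms into the $\psi$-terms and deduce
\begin{equation*}
\sumi\intO\psi(u_i)\,dx \leq 2\,\mathcal{E}(t) + C \leq C(1 + e^{CT}).
\end{equation*}
Applying the same superlinearity estimate once more yields $\|u_i\|_{\LO{1}}\leq C(1+e^{CT})$, and since $|u_i\log u_i| \leq \psi(u_i) + u_i + e^{-1}$ pointwise, the desired $L\log L$ bound follows directly. The $L^1$-bound stated as the consequence is then immediate (and in fact has already been obtained along the way). The only delicate step is the absorption argument in the second paragraph; everything else is a direct application of Gronwall and integration by parts.
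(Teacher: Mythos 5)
Your proposal is correct and follows essentially the same route as the paper: differentiate the entropy $\sumi\intO u_i(\log u_i+\mu_i-1)\,dx$, drop the Fisher information, apply \eqref{E} and Gronwall, then absorb the unsigned $\mu_i\intO u_i\,dx$ terms using the superlinearity of $\psi(s)=s\log s-s+1$. Your inequality $s\le \epsilon\,\psi(s)+C_\epsilon$ is just a rearrangement of the paper's $x\log x-x+1\ge Lx-e^L+1$ with $\epsilon=1/L$ and $L=2\max_i|\mu_i|$, so the absorption step is the same argument in a slightly different order (you bound $\intO\psi(u_i)\,dx$ first and deduce the $L^1$ bound, the paper does the reverse).
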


\begin{proof}	
From the entropy inequality \eqref{E} we have
\begin{equation*}
\begin{split}
\frac{d}{dt}\int_{\Omega}\sum^m_{i=1}&(u_i(\log u_i+\mu_i)-u_i)dx=\int_{\Omega}\sum^m_{i=1}(\log u_i+\mu_i)\partial_tu_idx\\
&=-\int_{\Omega}\sum^m_{i=1}d_i\frac{|\nabla u_i|^2}{u_i}dx+\int_{\Omega}\sum^m_{i=1}(\log u_i+\mu_i)f_i(x,t,u)dx\\
&\leq -4\int_{\Omega}\sum^m_{i=1}d_i|\nabla \sqrt{u_i}|^2+ k_2\sumi\int_{\Omega}\bra{ u_i\log u_i + (\mu_i-1)u_i}dx + k_3|\Omega|\\
&\leq  k_2\sumi\int_{\Omega}\bra{ u_i\log u_i + (\mu_i-1)u_i}dx + k_3|\Omega|.
\end{split}
\end{equation*}
By standard Gronwall's inequality we have
\begin{equation*}
\begin{split}
\sumi\int_{\Omega}&\bra{ u_i(t)\log u_i(t)+ (\mu_i-1)u_i(t)}dx\\
&\leq \sumi\int_{\Omega}\bra{ u_{i0}\log u_{i0} + (\mu_i-1)u_{i0}}dx + k_3|\Omega|e^{k_2T} \quad \forall t\in[0,T).
\end{split}
\end{equation*}
We rewrite this inequality as
\blue{\begin{equation}\label{eq3.1}
\begin{split}
\sumi\int_{\Omega}(u_i(t)\log u_i(t)&-u_i(t)+1) dx\leq \sumi\int_{\Omega}\bra{u_{i0}\log u_{i0}+ (\mu_i-1)u_{i0}}dx \\
&+ k_3|\Omega|e^{k_2T}+m|\Omega|-\sumi\int_{\Omega}\mu_i u_i(t)dx \quad \forall t\in[0,T).
\end{split}
\end{equation}}
Using the inequality $x\log x-x+1\geq Lx-e^L+1$ for all $L>0$ we have
\blue{\begin{equation*}
\begin{split}
\sumi\int_{\Omega}( u_i(t)\log u_i(t)-u_i(t)+1) \,dx\geq k\sumi\int_{\Omega}u_i(t)dx-e^km|\Omega|+m|\Omega|\,dx
\end{split}
\end{equation*}}
with $k=2\max_{i=1,\cdots,m}|\mu_i|$. Therefore we obtain the estimate
\begin{equation*}
\begin{split}
\sumi\int_{\Omega}u_i(t)dx&\leq \frac{2}{3k}\bra{\sumi\int_{\Omega}\bra{ u_{i0}\log u_{i0} + (\mu_i-1)u_{i0}}dx + k_3|\Omega|e^{k_2T}+e^km|\Omega|} \\
&\leq C\bra{1+e^{CT}}\quad \text{ for all } \quad t\in [0,T).
\end{split}
\end{equation*}
Which, together with the positivity of the solution, leads to the uniform in time $L^1-$bound. The bound of $\|(u_i\log u_i)(t)\|_{\LO{1}}$ immediately \blue{follows from \eqref{eq3.1}} and $x\log x\geq x-1$ for all $x\geq 0$, i.e.
\begin{equation*}
\begin{split}
\|(u_i\log u_i)(t)\|_{\LO{1}}\leq C\bra{1+e^{CT}}\quad \text{ for all } \quad t\in [0,T).
\end{split}
\end{equation*}
\end{proof}

The following modified interpolation inequality is important for the sequel.
\begin{lemma}[A modified Gagliardo-Nirenberg inequality]\label{MGN}
For any $\varepsilon>0$ and $\Omega\subset \mathbb{R}^n$, there exists $c_\varepsilon>0$ such that for all $f\in H^1(\Omega)$,
\begin{equation*}
\begin{split}
\|f\|^{2+\frac{2}{n}}_{\LO{2+\frac{2}{n}}}\leq \varepsilon \|f\|^2_{H^1(\Omega)}\|f\log|f|\|^{\frac{2}{n}}_{\LO1}+c_\varepsilon \|f\|_{L^1(\Omega)}.
\end{split}
\end{equation*}
\end{lemma}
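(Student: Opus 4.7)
The plan is to prove the inequality by splitting $\int_\Omega|f|^{2+2/n}\,dx$ at a threshold $K>e$ (chosen large depending on $\varepsilon$), controlling the small-value region $\{|f|\leq K\}$ crudely by $K^{1+2/n}\|f\|_{L^1}$, and reducing the large-value region to the truncated function $g:=(|f|-K)_+$, which I will estimate via the classical Gagliardo-Nirenberg inequality combined with the entropy control on $\|g\|_{L^1}$.

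First I would invoke the classical Gagliardo-Nirenberg inequality: for every $h\in H^1(\Omega)$, one has $\|h\|_{L^{2+2/n}(\Omega)}^{2+2/n}\leq C_\Omega\|h\|_{H^1(\Omega)}^2\|h\|_{L^1(\Omega)}^{2/n}$, which follows from the corresponding sharp inequality on $\R^n$ by Sobolev extension. Setting $p:=2+2/n$, convexity gives $|f|^p\leq 2^{p-1}\bigl((|f|-K)^p+K^p\bigr)$ on $\{|f|>K\}$, and combined with Chebyshev's inequality $|\{|f|>K\}|\leq \|f\|_{L^1}/K$, one obtains
\[
  \int_\Omega|f|^p\,dx \leq \bigl(1+2^{p-1}\bigr)K^{p-1}\|f\|_{L^1} + 2^{p-1}\|g\|_{L^p}^p.
\]
Since $f\in H^1(\Omega)\Rightarrow |f|\in H^1(\Omega)$ with $\nabla|f|=\mathrm{sgn}(f)\nabla f$ a.e., we have $g\in H^1(\Omega)$, $|\nabla g|=\chi_{\{|f|>K\}}|\nabla f|$, and $g\leq|f|$, so $\|g\|_{H^1}\leq\|f\|_{H^1}$. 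Applying Gagliardo-Nirenberg to $g$ gives $\|g\|_{L^p}^p \leq C\|f\|_{H^1}^2\|g\|_{L^1}^{2/n}$.

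To bring in the entropy, note that for $K>e$ we have $\log|f|>1$ on $\{|f|>K\}$, hence $|f|\leq(\log K)^{-1}|f|\log|f|$ there, so
\[
  \|g\|_{L^1}\leq\int_{\{|f|>K\}}|f|\,dx\leq\frac{1}{\log K}\,\|f\log|f|\|_{L^1}.
\]
Assembling the estimates,
\[
  \|f\|_{L^p}^p \leq C_1 K^{p-1}\|f\|_{L^1} + \frac{C_2}{(\log K)^{2/n}}\,\|f\|_{H^1}^2\,\|f\log|f|\|_{L^1}^{2/n},
\]
so given $\varepsilon>0$ one simply picks $K$ large enough that $C_2/(\log K)^{2/n}\leq \varepsilon$ and sets $c_\varepsilon:=C_1 K^{p-1}$.

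The only point requiring real care is the validity of the Gagliardo-Nirenberg inequality in the precise form $\|h\|_{L^{2+2/n}}^{2+2/n}\leq C\|h\|_{H^1}^2\|h\|_{L^1}^{2/n}$ on the bounded domain $\Omega$; this is handled by Sobolev extension to $\R^n$ under the standard boundary regularity, reducing it to the classical $\R^n$ version. All remaining manipulations (the truncation, the Chebyshev bound, the logarithmic domination on $\{|f|>K\}$, and the choice of $K$) are elementary.
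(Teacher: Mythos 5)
Your proof is correct and follows essentially the same strategy as the paper's: truncate $f$ at a large level, bound the truncated (small-value) part by $K^{p-1}\|f\|_{L^1}$, apply the classical Gagliardo--Nirenberg inequality to the tail, and gain the small factor $\varepsilon$ from $\|g\|_{L^1}\leq(\log K)^{-1}\|f\log|f|\|_{L^1}$. The only cosmetic difference is that the paper uses a piecewise-linear cutoff $\chi$ with a transition zone $[N,2N]$ in place of your $(|f|-K)_+$ together with the convexity-plus-Chebyshev bound, but the two are interchangeable here.
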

\begin{remark}
	This inequality was proved in \cite{tang2018global} for the case $n=1$ and $n=2$.
\end{remark}

\begin{proof}
Fix a constant $N>1$. Define a function $\chi:\mathbb{R}\rightarrow \mathbb{R}$ as $\chi(s)=0$ if $|s|\leq N,~\chi(s)=2(|s|-N)$ when $N<|s|<2N$ and $\chi(s)=|s|$ when $|s|>2N.$ In this proof we use
$$\Omega\{|f|\geq N\}:=\{x\in\Omega:|f(x)|\geq N\},$$
$$\Omega\{|f|\leq N\}:=\{x\in\Omega:|f(x)|\leq N\}$$
and
$$\Omega\{|f|\leq 2N\}:=\{x\in\Omega:|f(x)|\leq 2N\}.$$

First we write
\begin{equation}\label{M1}
\begin{split}
\|f\|^{2+\frac{2}{n}}_{\LO{2+\frac{2}{n}}}&\leq \bra{\|\chi(f)\|_{L^{2+\frac{2}{n}}(\Omega)}+\||f|-\chi(f)\|_{\LO{2+\frac{2}{n}}}}^{2+\frac{2}{n}}\\
&\leq 2^{1+\frac{2}{n}}\bra{\|\chi(f)\|^{2+\frac{2}{n}}_{\LO{2+\frac{2}{n}}}+\||f|-\chi(f)\|^{2+\frac{2}{n}}_{\LO{2+\frac{2}{n}}}}
\end{split}
\end{equation}
and then estimate each term separately.  It is easy to see that
\begin{equation}\label{M2}
\begin{split}
\||f|-\chi(f)\|^{2+\frac{2}{n}}_{\LO{2+\frac{2}{n}}}&=\int_{\Omega}||f|-\chi(f)|^{2+\frac{2}{n}}dx\\
&\leq (2N)^{1+\frac{2}{n}}\int_{\Omega\{|f|\leq2N\}}|f|dx\leq(2N)^{1+\frac{2}{n}}\|f\|_{L^{1}(\Omega)}.
\end{split}
\end{equation}
Concerning the other term, we use the usual Gagliardo-Nirenberg inequality
\begin{equation}\label{M3}
\begin{split}
\|\chi(f)\|^{2+\frac{2}{n}}_{\LO{2+\frac{2}{n}}}\leq C\|\chi(f)\|^2_{H^{1}(\Omega)}\|\chi(f)\|^{\frac{2}{n}}_{\LO1},
\end{split}
\end{equation}
for some $C>0$ depending only on $n$ and $\Omega$. On the one hand
\begin{equation}\label{M4}
\begin{split}
\|\chi(f)\|^2_{H^{1}(\Omega)}=\|\chi'(f)\nabla f\|^2_{\LO2}+\|\chi(f)\|^2_{\LO2}\leq 4\|f\|^2_{H^1(\Omega)}
\end{split}
\end{equation}
and on the other hand
\begin{equation}\label{M5}
\begin{split}
\|\chi(f)\|^{\frac{2}{n}}_{\LO1}&\leq\bra{\int_{\Omega\{|f|\geq N\}}|f|dx}^{\frac{2}{n}}\\
&\leq\frac{1}{(\log N)^{\frac{2}{n}}}\|f\log |f|\|_{\LO1}^{\frac{2}{n}}.
\end{split}
\end{equation}

By combining \eqref{M1}-\eqref{M5} we obtain
\begin{equation*}
\begin{split}
\|f\|^{2+\frac{2}{n}}_{\LO{2+\frac{2}{n}}}\leq 2^{3+\frac{2}{n}}\frac{C}{(\log N)^{\frac{2}{n}}} \|f\|^2_{H^1(\Omega)}+2^{1+\frac{2}{n}}(2N)^{1+\frac{2}{n}}\|f\|_{\LO1}.
\end{split}
\end{equation*}

At this point we can choose $N$ to be large enough to obtain the desired inequality.
\end{proof}

\begin{remark}
	\blue{A similar argument can be used if $f\log|f|$ in Lemma \ref{MGN} is replaced by $f\Phi(f)$ for some strictly increasing function $\Phi: (0,\infty)\to \R$ satisfying $\lim_{z\to 0}(z\Phi(z)) = 0$ and $\lim_{z\to\infty }\Phi(z) = +\infty$. We leave the details for the interested reader.}
\end{remark}

We need another result from \cite{morgan2021global}, which provides an important consequence of the intermediate sum condition \eqref{A4}, which in turn becomes essential in building appropriate $L^p$-energy functions.
\begin{lemma}\cite[Lemma 2.2]{morgan2021global}\label{lem3.3}
	\blue{Assume \eqref{A1} and \eqref{A4}}. Then there exist componentwise increasing functions $g_i: \R^{m-i} \to \R_+$ for $i=1,\ldots, m-1$ such that: if $\theta = (\theta_1,\ldots, \theta_m)\in (0,\infty)^m$ satisfies $\theta_i \geq g_i(\theta_{i+1},\ldots, \theta_m)$ for all $i=1,\ldots, m-1$, then
	\begin{equation*}
		\sumi \theta_i f_i(x,t,u) \leq K_\theta\bra{1+\sumi u_i}^r
	\end{equation*}
	for some constant $K_\theta$ depending on $\theta$, $g_i$.
\end{lemma}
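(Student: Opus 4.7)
The plan is to exploit the lower triangular structure of the matrix $A$ in \eqref{A4} by inverting the system, i.e., expressing $\sum_i \theta_i f_i$ as a non-negative linear combination of the partial sums $\sum_{j=1}^i a_{ij} f_j$ that appear in \eqref{A4}. Concretely, I would look for multipliers $\lambda_1,\ldots,\lambda_m \geq 0$ such that
\begin{equation*}
\sum_{i=1}^m \theta_i f_i \;=\; \sum_{i=1}^m \lambda_i \sum_{j=1}^i a_{ij} f_j,
\end{equation*}
since if such non-negative $\lambda_i$ exist, then applying \eqref{A4} to each inner sum immediately gives
\begin{equation*}
\sum_{i=1}^m \theta_i f_i \;\leq\; C\Bigl(\sum_{i=1}^m \lambda_i\Bigr)\Bigl(1+\sum_j u_j\Bigr)^r,
\end{equation*}
which is the desired bound with $K_\theta := C\sum_i \lambda_i$.

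Swapping the order of summation on the right identifies the condition $\sum_{i=j}^m \lambda_i a_{ij} = \theta_j$ for each $j=1,\ldots,m$. This is a lower triangular linear system in $\lambda$ with strictly positive diagonal entries $a_{jj}>0$, so I can solve it uniquely by back-substitution starting from $i=m$:
\begin{equation*}
\lambda_m = \frac{\theta_m}{a_{mm}}, \qquad \lambda_j = \frac{1}{a_{jj}}\Bigl(\theta_j - \sum_{i=j+1}^m \lambda_i\, a_{ij}\Bigr) \quad (j=m-1,\ldots,1).
\end{equation*}
Observe that $\lambda_i$ for $i>j$ is determined only by $\theta_{i},\ldots,\theta_m$, and by induction each such $\lambda_i$ is a componentwise increasing function of $(\theta_{i},\ldots,\theta_m)$ (as a non-negative combination of its arguments divided by a positive constant). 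Therefore the quantity
\begin{equation*}
g_j(\theta_{j+1},\ldots,\theta_m) \;:=\; \sum_{i=j+1}^m \lambda_i(\theta_{i},\ldots,\theta_m)\, a_{ij}
\end{equation*}
is itself componentwise increasing, and the condition $\theta_j \geq g_j(\theta_{j+1},\ldots,\theta_m)$ is precisely what guarantees $\lambda_j \geq 0$.

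With these $g_j$ fixed, the hypothesis of the lemma ensures all $\lambda_j \geq 0$, so the linear combination is admissible and the stated estimate follows with $K_\theta = C\sum_i \lambda_i$, which depends only on $\theta$ (through the explicit back-substitution formulas) and on the constant from \eqref{A4}. There is really no substantial obstacle here; the only things to double-check are that the monotonicity of $g_j$ propagates correctly through the recursion (which follows because the $a_{ij}$ are non-negative and $a_{jj}>0$) and that the $g_j$ take values in $\R_+$ (which is clear). The entire argument is algebraic and uses nothing beyond the triangularity and sign structure built into \eqref{A4}.
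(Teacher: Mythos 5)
Your overall strategy---writing $\sum_{i}\theta_i f_i$ as a non-negative combination $\sum_i \lambda_i\bigl(\sum_{j\le i}a_{ij}f_j\bigr)$ and solving the resulting triangular system $\sum_{i\ge j}\lambda_i a_{ij}=\theta_j$ by back-substitution---is the right one, and it is essentially how the cited reference proceeds. But there is a genuine gap in the step where you claim that each $\lambda_i$, and hence each $g_j:=\sum_{i>j}\lambda_i a_{ij}$, is componentwise increasing in $(\theta_{i},\ldots,\theta_m)$. The recursion is
\begin{equation*}
\lambda_j=\frac{1}{a_{jj}}\Bigl(\theta_j-\sum_{i>j}\lambda_i a_{ij}\Bigr),
\end{equation*}
which is a \emph{subtraction} of non-negative quantities, not a ``non-negative combination of its arguments''. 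For instance, with $m=3$, $a_{11}=a_{22}=a_{33}=1$, $a_{21}=2$, $a_{32}=1$, $a_{31}=0$, one gets $\lambda_3=\theta_3$, $\lambda_2=\theta_2-\theta_3$, and your $g_1(\theta_2,\theta_3)=2\lambda_2=2\theta_2-2\theta_3$, which is strictly decreasing in $\theta_3$ and not even $\R_+$-valued. This is not a cosmetic defect: the componentwise monotonicity of the $g_i$ is exactly the property invoked later in the proof of Lemma \ref{lem1}, where the inequality $\theta_i\ge g_i(\theta_{i+1}^{2p-1},\ldots,\theta_m^{2p-1})$ must be upgraded to $\theta_i^{2\beta_i+1}\ge g_i(\theta_{i+1}^{2\beta_{i+1}+1},\ldots,\theta_m^{2\beta_m+1})$ using $\theta_j^{2\beta_j+1}\le\theta_j^{2p-1}$.

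The fix is short: do not take $g_j$ to be the exact quantity $\sum_{i>j}\lambda_i a_{ij}$, but a componentwise increasing majorant of it. Once $\lambda_i\ge 0$ for all $i>j$ is known, the defining relation gives $\lambda_i\le\theta_i/a_{ii}$, so you may set
\begin{equation*}
g_j(\theta_{j+1},\ldots,\theta_m):=\sum_{i=j+1}^{m}\frac{a_{ij}}{a_{ii}}\,\theta_i,
\end{equation*}
which is linear with non-negative coefficients, hence componentwise increasing and $\R_+$-valued. A downward induction then shows that $\theta_j\ge g_j(\theta_{j+1},\ldots,\theta_m)$ for all $j$ forces $\lambda_j\ge 0$ for all $j$ (indeed $\sum_{i>j}\lambda_i a_{ij}\le\sum_{i>j}(\theta_i/a_{ii})a_{ij}=g_j\le\theta_j$), after which your computation of $K_\theta=C\sum_i\lambda_i$ goes through unchanged. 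With this modification the argument is correct.
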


The following  \blue{$L^p$-energy function} has been developed in \cite{morgan2021global,fitzgibbon2021reaction}. For any $2\leq p \in \mathbb N$, we define
\begin{equation}\label{Lpenergy}
	E_p[u]:= \sum_{\beta\in \mathbb{Z}_+^m, |\beta| = p}\begin{pmatrix}p\\ \beta\end{pmatrix}\theta^{\beta^2}u^{\beta}
\end{equation}
and
\begin{equation}\label{EE}
	\mathscr{E}_p[u]:= \intO E_p[u(x)]dx,
\end{equation}
where
\begin{equation*}
	\begin{pmatrix}p\\ \beta\end{pmatrix} = \frac{p!}{\beta_1!\ldots \beta_m!}, \quad \theta^{\beta^2} = \prod_{j=1}^m \theta_j^{\beta_j^2}, \quad u^{\beta} = \prod_{j=1}^{m}u_j^{\beta_j}.
\end{equation*}

\begin{lemma}\label{lem1}
	\blue{Assume \eqref{A1} and \eqref{A4}.} Then for any $2\le p \in \mathbb N$, there exists $\theta = (\theta_1,\ldots, \theta_m)\in (0,\infty)^m$ such that the energy function $\EE_p[u]$ defined in \eqref{EE} satisfies
	\begin{equation*}
		\frac{d}{dt}\EE_p[u] + \alpha_p\sumi \intO |\pa_{x}(u_i^{p/2})|^2dx \leq C\bra{1+\sumi \intO u_i^{p-1+r}dx}
	\end{equation*}
	along the trajectory of solution to \eqref{eq1.1}, for some constant $\alpha_p>0$ depending on $\theta$ and $p$, \blue{$C=C(p,\theta)$}.
\end{lemma}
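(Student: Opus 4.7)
The plan is to differentiate $\mathscr{E}_p[u(t)]$ in time, substitute the equation \eqref{eq1.1}, integrate by parts, and then separately estimate the diffusion quadratic form and the reaction term. The vector $\theta = (\theta_1,\ldots,\theta_m)$ will be chosen in a cascading manner, starting from $\theta_m$ and proceeding backwards to $\theta_1$, in such a way that both estimates work simultaneously.

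\textbf{Step 1 (Time derivative and integration by parts).} Using $\partial_t u_i = d_i \partial_{xx} u_i + f_i$ together with the homogeneous Neumann boundary condition, one obtains
\begin{equation*}
\frac{d}{dt}\mathscr{E}_p[u] = -\int_\Omega \sum_{i,j=1}^m d_i\, \partial^2_{u_i u_j} E_p[u]\, \partial_x u_i\, \partial_x u_j\, dx + \int_\Omega \sum_{i=1}^m \partial_{u_i} E_p[u]\, f_i(x,t,u)\, dx.
\end{equation*}
A direct computation, using $|\gamma| = p-1$ with $\gamma = \beta - e_i$, gives the useful identity
\begin{equation*}
\partial_{u_i} E_p[u] = p\,\theta_i \sum_{|\gamma|=p-1}\binom{p-1}{\gamma}\theta_i^{2\gamma_i}\theta^{\gamma^2} u^{\gamma}.
\end{equation*}

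\textbf{Step 2 (Diffusion estimate).} From the above identity, the diagonal Hessian $\partial^2_{u_iu_i}E_p[u]$ contains the dominant term $p(p-1)\theta_i^{p^2} u_i^{p-2}$ (coming from $\gamma = (p-1)e_i$), while the off-diagonal entries $\partial^2_{u_iu_j}E_p[u]$ for $i\ne j$ contain only mixed monomials whose $\theta$-weights involve \emph{both} $\theta_i$ and $\theta_j$ raised to lower powers. By choosing $\theta$ cascading, i.e.\ picking $\theta_m$ first and then each $\theta_i$ large enough relative to $\theta_{i+1},\ldots,\theta_m$, a Schur-type argument (or direct completion of squares) yields
\begin{equation*}
\sum_{i,j}d_i\partial^2_{u_iu_j}E_p[u]\,\partial_x u_i\,\partial_x u_j \ge c_p \sum_{i=1}^m d_i\theta_i^{p^2} u_i^{p-2}|\partial_x u_i|^2 = \frac{4c_p}{p^2}\sum_{i=1}^m d_i\theta_i^{p^2}\left|\partial_x(u_i^{p/2})\right|^2
\end{equation*}
for some $c_p>0$, using $|\partial_x(u_i^{p/2})|^2 = \tfrac{p^2}{4} u_i^{p-2}|\partial_x u_i|^2$.

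\textbf{Step 3 (Reaction estimate via intermediate sum).} Regrouping,
\begin{equation*}
\sum_{i=1}^m \partial_{u_i}E_p[u]\,f_i = p\sum_{|\gamma|=p-1}\binom{p-1}{\gamma}\theta^{\gamma^2} u^{\gamma}\sum_{i=1}^m \theta_i^{2\gamma_i+1} f_i(x,t,u).
\end{equation*}
For each fixed multi-index $\gamma$, the inner sum is a weighted combination of the $f_i$'s with positive weights $\theta_i^{2\gamma_i+1}\in[\theta_i,\theta_i^{2p-1}]$. Since Lemma \ref{lem3.3} requires only the cascading condition $\theta_i\ge g_i(\theta_{i+1},\ldots,\theta_m)$ with $g_i$ componentwise increasing, we strengthen the choice of $\theta$ from Step 2 so that for \emph{every} admissible $\gamma$ one has $\theta_i^{2\gamma_i+1}\ge g_i\bigl(\theta_{i+1}^{2\gamma_{i+1}+1},\ldots,\theta_m^{2\gamma_m+1}\bigr)$; this is achievable by taking $\theta_i$ sufficiently large relative to $\theta_{i+1}^{2p-1},\ldots,\theta_m^{2p-1}$ (finite number of conditions). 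Lemma \ref{lem3.3} then gives
\begin{equation*}
\sum_{i=1}^m \theta_i^{2\gamma_i+1} f_i(x,t,u) \le K_{\theta,\gamma}\bigl(1+\textstyle\sum_j u_j\bigr)^r,
\end{equation*}
and summing over $\gamma$ (together with $u^\gamma\le C_p(1+\sum_j u_j)^{p-1}$) produces the pointwise bound $\sum_i \partial_{u_i}E_p[u]\,f_i \le C_\theta(1+\sum_j u_j)^{p-1+r}$. Integrating and using the elementary inequality $(1+\sum_j u_j)^{p-1+r}\le C(1+\sum_j u_j^{p-1+r})$ yields the desired bound on the reaction contribution.

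\textbf{Step 4 (Conclusion and obstacle).} Combining Steps~2 and~3 gives the differential inequality with $\alpha_p = \tfrac{4c_p}{p^2}\min_i d_i\theta_i^{p^2}$. The main obstacle is precisely the double role played by $\theta$: it must be cascading enough to enforce positivity of the Hessian quadratic form in Step~2 \emph{and} simultaneously satisfy the (finitely many) monotone cascading inequalities in Step~3 coming from Lemma \ref{lem3.3} applied to all $\binom{p+m-2}{m-1}$ multi-indices $\gamma$ with $|\gamma|=p-1$. Since both requirements are of the form $\theta_i\ge\Phi_i(\theta_{i+1},\ldots,\theta_m)$ for increasing $\Phi_i$, they can be met together by picking $\theta_m$ arbitrary and then successively choosing $\theta_{m-1},\theta_{m-2},\ldots,\theta_1$ large enough.
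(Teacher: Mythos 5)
Your proposal is correct and follows essentially the same route as the paper: differentiate $\EE_p[u]$, integrate by parts, make the weighted Hessian quadratic form positive definite by taking the $\theta_i$ large (the paper organizes this by grouping over multi-indices $|\beta|=p-2$ and conjugating to a diagonally dominant matrix, which is the same computation as your Schur/completion-of-squares step), and then apply Lemma \ref{lem3.3} to the weights $\theta_i^{2\gamma_i+1}$ after enforcing $\theta_i\geq g_i(\theta_{i+1}^{2p-1},\ldots,\theta_m^{2p-1})$ and using monotonicity of the $g_i$. You also correctly identify and resolve the only delicate point, namely that the finitely many positivity and cascading conditions on $\theta$ must be met simultaneously by a backward inductive choice, exactly as in the paper.
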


\begin{proof}
Let $u$ solve \eqref{eq1.1} and $\EE_p(t):=\EE_p[u](t)$ be defined in \eqref{EE}. Then by \cite[Lemma A.1]{morgan2021global}, we have
  \begin{equation*}
     \begin{split}
         \EE'_p(t)&=\intO\sum_{|\beta|=p-1}\begin{pmatrix}p\\ \beta\end{pmatrix}\theta^{\beta^2}u(x,t)^{\beta}\sum^m_{j=1}\theta_j^{2\beta_j+1}\frac{\partial}{\partial t}u_j(x,t)dx\\
         &= \intO\sum_{|\beta|=p-1}\begin{pmatrix}p\\ \beta\end{pmatrix}\theta^{\beta^2}u(x,t)^{\beta}\sum^m_{j=1}\theta_j^{2\beta_j+1}\bra{d_j\pa_{xx}u_j+f_j(x,t,u)}dx.
      \end{split}
  \end{equation*}
If we  apply \cite[Lemma A.2]{morgan2021global}  and integration by parts, we have
  \begin{equation}\label{I}
     \begin{split}
         \intO\sum_{|\beta|=p-1}&\begin{pmatrix}p\\ \beta\end{pmatrix}\theta^{\beta^2}u(x,t)^{\beta}\sum^m_{j=1}\theta_j^{2\beta_j+1}d_j\pa_{xx}u_jdx\\
         &=-\intO\sum_{|\beta|=p-2}\begin{pmatrix}p\\ \beta\end{pmatrix}\theta^{\beta^2}u(x,t)^{\beta}\sumi\sum^m_{j=1}a_{ij}\pa_x u_i\pa_x u_jdx\\
         &=:(I)
      \end{split}
  \end{equation}
with
  \begin{equation*}
     a_{ij}=\begin{cases}
         \frac{d_i+d_j}{2}\theta_i^{2\beta_i+1}\theta_j^{2\beta_j+1} &\quad \text{if}\quad i\neq j,\\
         d_i\theta_i^{4\beta_i+4} &\quad \text{if}\quad i= j.
      \end{cases}
  \end{equation*}
  Denote by $\mathscr A = (a_{i,j})_{i,j=1,\ldots, m}$, $\mathscr{C} = \text{diag}(\theta_i^{-2\beta_i - 1})$, and
  \begin{equation*}
  		\mathscr{M} = (m_{ij}):= \begin{cases}
	  		d_i\theta_i^2, & \text{ when } i=j,\\
	  		\frac{d_i+d_j}{2}, &\text{ when } i\ne j.
  		\end{cases}
  \end{equation*}
  It's easy to check that
  \begin{equation*}
  	\mathscr A = \mathscr{C}^{-1}\mathscr{M}\mathscr{C}^{-1}.
  \end{equation*}
  By choosing $\theta_i, i=1,\ldots, m$ large enough, the matrix $\mathscr{M}$ is diagonally dominant, and therefore positive definite. This implies that $\mathscr A$ is also positive definite. Therefore, there exists $\omega>0$ such that
  \begin{equation*}
  	(\pa_x u)^{\top}\mathscr{A}(\pa_x u) \geq \omega|\pa_x u|^2
  \end{equation*}
  where $\pa_x u = (\pa_x u_i)_{i=1,\ldots,m}$ is \blue{a column vector} in $\R^m$. It then follows from \eqref{I} that
  \begin{equation*}
     \begin{aligned}
        (I)&=-\intO\sum_{|\beta|=p-2}\begin{pmatrix}p\\ \beta\end{pmatrix}\theta^{\beta^2}u(x,t)^{\beta}(\pa_x u)^{\top} \mathscr{A}(\pa_x u)dx\\
        &\leq \alpha_p\sum_{i=1}^m\intO \left|\pa_x(u_i^{\frac p2})\right|^2dx.
      \end{aligned}
  \end{equation*}
Consequently, returning to above, we obtain
  \begin{equation}\label{eq-E}
     \begin{split}
         \EE'_p(t)&+\alpha_p\sumi\intO|\pa_x(u_i^{\frac{p}{2}})|^2dx\\
         &\leq \intO\sum_{|\beta|=p-1}\begin{pmatrix}p\\ \beta\end{pmatrix}\theta^{\beta^2}u(x,t)^{\beta}\sumi\theta_i^{2\beta_i+1}f_i(x,t,u)dx.
      \end{split}
  \end{equation}

So, we choose the components of $\theta = (\theta_i)$ inductively so that $\theta_i$ are sufficiently large that the previous positive definiteness condition of $\mathscr{M}$ is satisfied, and
  \begin{equation}\label{star}
     \begin{split}
         \theta_i\geq g_i(\theta_{i+1}^{2p-1},\cdots,\theta_{m}^{2p-1}) \quad\text{for}\quad i=1,\cdots,m-1,
      \end{split}
  \end{equation}
where $g_i$ are functions constructed in Lemma \ref{lem3.3}. Note that $\theta_i \leq \theta_i^{2\beta_i + 1} \leq \theta_i^{2p - 1}$. Since $g_i$ is componentwise increasing,  the relation \eqref{star} implies
\begin{equation*}
	\theta_i^{2\beta_i + 1} \geq g_i\bra{\theta_{i+1}^{2\beta_{i+1}+1}, \ldots, \theta_m^{2\beta_m + 1}}, \quad \forall i=1,\ldots, m-1.
\end{equation*}
Now we can apply Lemma \ref{lem3.3}, to obtain some $K_{\widetilde{\theta}}$  so that for all $\beta\in\mathbb{Z}_+$ with $|\beta|=p-1$, we have
  \begin{equation*}
     \begin{split}
         \sumi \theta_{i}^{2\beta_i+1}f_i(x,t,u)\leq K_{\widetilde{\theta}}\bra{1+\sumi u_i^r} \quad\text{for all}\quad (x,t,u)\in \Omega\times\mathbb{R}_+\times \mathbb{R}^m_+.
      \end{split}
  \end{equation*}
\blue{It follows that there exists} $C_p>0$, such that \eqref{eq-E} implies
  \begin{equation*}
     \begin{split}
         \EE'_p(t)&+\alpha_p\sumi\intO|\pa_x(u_i^{\frac{p}{2}})|^2dx\leq C_p\bra{\sumi\intO u_i^{p-1+r}dx+1}.
      \end{split}
  \end{equation*}
\end{proof}

\begin{lemma}\label{lem3.5}
\blue{Assume \eqref{E},  \eqref{A1} and \eqref{A4}} with $r=3$. Then for any $T\in (0,T_{\max})$,
	\begin{equation*}
		\sup_{i=1,\ldots, m}\|u_i(t)\|_{\LO{2}} \leq C(T) \quad \text{ for all } \quad t\in (0,T),
	\end{equation*}
	with $C(T)$ depends continuously on $T\in (0,\infty)$.
\end{lemma}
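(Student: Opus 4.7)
The plan is to apply the $L^p$-energy estimate from Lemma \ref{lem1} with $p=2$ and then absorb the resulting super-quadratic right-hand side using the modified Gagliardo-Nirenberg inequality (Lemma \ref{MGN}) in dimension one, fed by the $L\log L$-bound derived from the entropy inequality (Lemma \ref{lem-1}).

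Specifically, I would first apply Lemma \ref{lem1} with $p=2$. Since $r=3$ we have $p-1+r=4$, so along the trajectory
\begin{equation*}
    \frac{d}{dt}\EE_2[u] + \alpha_2 \sumi \intO |\pa_x u_i|^2\,dx \leq C\bra{1+\sumi \intO u_i^{4}\,dx}.
\end{equation*}
Note that $\EE_2[u]$ is equivalent to $\sumi \|u_i\|_{\LO{2}}^2$ up to multiplicative constants depending only on the fixed weights $\theta_j$ chosen in the construction of $\EE_p$.

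Next, by Lemma \ref{lem-1}, the entropy inequality \eqref{E} yields
\begin{equation*}
    \|u_i(t)\|_{\LO{1}} + \|(u_i\log u_i)(t)\|_{\LO{1}} \leq C(T),
    \qquad t\in(0,T),
\end{equation*}
with $C(T)$ continuous in $T$. Since $n=1$ gives the exponent $2+2/n=4$, the modified Gagliardo-Nirenberg inequality of Lemma \ref{MGN} applied to each component yields, for any $\eta>0$,
\begin{equation*}
    \|u_i\|_{\LO{4}}^{4} \leq \eta\,\|u_i\|_{H^1(\Omega)}^{2}\,\|u_i\log u_i\|_{\LO{1}}^{2} + c_\eta\,\|u_i\|_{\LO{1}} \leq \eta\, C(T)^{2}\,\|u_i\|_{H^1(\Omega)}^2 + c_\eta\, C(T).
\end{equation*}

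Inserting this into the $L^2$-energy estimate and choosing $\eta=\eta(T)>0$ small enough so that $C\,\eta\,C(T)^{2}\,m \leq \alpha_2/2$, one absorbs the $\|\pa_x u_i\|_{\LO{2}}^2$ contributions and is left with
\begin{equation*}
    \frac{d}{dt}\EE_2[u] + \frac{\alpha_2}{2}\sumi \intO|\pa_x u_i|^2\,dx \leq \widetilde C(T)\bra{1+\sumi \|u_i\|_{\LO{2}}^2}.
\end{equation*}
Since $\EE_2[u]$ controls $\sumi\|u_i\|_{\LO{2}}^2$ from above and below, Gronwall's inequality then closes the estimate on $(0,T)$, yielding $\sup_{t\in(0,T)}\|u_i(t)\|_{\LO{2}} \leq C(T)$ with $C(T)$ continuous in $T$.

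The main obstacle is the $T$-dependence entering through $\|u_i\log u_i\|_{\LO{1}}\leq C(T)$: the absorption parameter $\eta$ must be chosen as a function of $T$, which in turn makes $c_\eta$ and the final Gronwall constants depend on $T$. One must verify that this dependence remains continuous in $T$, which it does because $C(T)$ from Lemma \ref{lem-1} is continuous. No integrability beyond $L\log L$ is needed, and the one-dimensional nature of $\Omega$ is essential for matching the exponent $2+2/n=4$ with the term $u_i^{p-1+r}=u_i^4$ arising from the cubic intermediate sum condition.
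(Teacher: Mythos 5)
Your proposal is correct and follows essentially the same route as the paper: apply Lemma \ref{lem1} with $p=2$ (so $p-1+r=4$), control $\sumi\|u_i\|_{\LO{4}}^4$ via the modified Gagliardo--Nirenberg inequality of Lemma \ref{MGN} with the $L\log L$ bound from Lemma \ref{lem-1}, absorb into the $H^1$ term by choosing the interpolation parameter small (depending on $T$), and conclude by Gronwall using the equivalence of $\EE_2[u]$ with $\sumi\|u_i\|_{\LO{2}}^2$. The only difference is cosmetic bookkeeping in how the absorbed inequality is written before applying Gronwall.
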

\begin{proof}
	From Lemma \ref{lem1}, it follows by choosing $p=2$ that
	\begin{equation*}
		\frac{d}{dt}\EE_2[u]  + \alpha_2\sumi \intO|\pa_x u_i|^2dx \leq C\bra{1+\sumi \|u_i\|_{\LO{4}}^{4}}.
	\end{equation*}
	Therefore,
	\begin{equation*}
		\frac{d}{dt}\EE_2[u]  + \alpha_2\sumi \|u_i\|_{H^1(\Omega)}^2 \leq C\bra{1+\sumi \|u_i\|_{\LO{4}}^{4}}.
	\end{equation*}	
	Fix $\eps>0$. Thanks to Lemmas \ref{lem-1} and \ref{MGN},
	\begin{equation*}
		\sumi \|u_i\|_{\LO{4}}^4 \leq \eps\sumi \|u_i\|_{H^1(\Omega)}^2 + C_\eps.
	\end{equation*}
	By choosing $\eps>0$ small enough, it follows that
	\begin{equation*}
		\frac{d}{dt}\EE_2[u] + \frac{\alpha_2}{2}\sumi \|u_i\|_{H^1(\Omega)}^2 \leq C,
	\end{equation*}
	which finishes the proof of Lemma \ref{lem3.5}.
\end{proof}

\begin{lemma}\label{lem-Lp}
	\blue{Assume \eqref{E},  \eqref{A1} and \eqref{A4}} with $r=3$. Then for any $2\leq p \in \mathbb N$ and any $T\in (0,T_{\max}]$,
	\begin{equation}\label{eq-Lp}
		\sup_{i=1,\ldots, m}\|u_i(t)\|_{\LO{p}} \leq C(T,p) \quad \text{ for all } \quad t\in (0,T)
	\end{equation}
	with $C(T , p)$ depends continuously on $T>0$.
\end{lemma}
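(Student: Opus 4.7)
The plan is to proceed by induction on $k \in \N$ to establish $\sup_i \|u_i(t)\|_{\LO{2^k}} \leq C(T,k)$ on $(0,T)$, and then pass to arbitrary integer $p$ by interpolation. The base case $k=1$ is precisely Lemma~\ref{lem3.5}. The engine of the induction will be the $L^p$-energy differential inequality of Lemma~\ref{lem1}, with its right-hand side absorbed through a one-dimensional Gagliardo--Nirenberg estimate whose lowest-norm factor is controlled by the inductive hypothesis.

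For the inductive step $k \mapsto k+1$, I would assume $\|u_i(t)\|_{\LO{2^k}} \leq C(T,k)$ on $(0,T)$ and apply Lemma~\ref{lem1} at level $p = 2^{k+1}$ to obtain
\begin{equation*}
	\frac{d}{dt}\EE_{2^{k+1}}[u] + \alpha_{2^{k+1}}\sumi\intO |\pa_x u_i^{2^k}|^2 dx \leq C\bra{1 + \sumi \intO u_i^{2^{k+1}+2} dx}.
\end{equation*}
Setting $v_i := u_i^{2^k}$ and $q := 2 + 2^{1-k}$, the key observation is that $\int_\Omega u_i^{2^{k+1}+2}\, dx = \|v_i\|_{\LO{q}}^q$, while $\|v_i\|_{\LO{1}} = \|u_i\|_{\LO{2^k}}^{2^k}$ is already bounded by the induction hypothesis. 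I would then invoke the 1D Gagliardo--Nirenberg inequality
\begin{equation*}
	\|v_i\|_{\LO{q}} \leq C\bra{\|\pa_x v_i\|_{\LO{2}}^{\theta}\|v_i\|_{\LO{1}}^{1-\theta} + \|v_i\|_{\LO{1}}}, \qquad \theta = \tfrac{2(q-1)}{3q},
\end{equation*}
combined with Young's inequality, to conclude
\begin{equation*}
	\|v_i\|_{\LO{q}}^q \leq \eps\, \|\pa_x v_i\|_{\LO{2}}^2 + C(\eps,T,k),
\end{equation*}
which is valid as soon as $q\theta = \tfrac{2(q-1)}{3} < 2$. Since $k \geq 1$ gives $q \leq 3$, this absorption condition is comfortably satisfied with $q\theta \leq 4/3$. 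Choosing $\eps$ small enough to absorb the gradient term into the dissipation on the left and integrating in time, using $\EE_{2^{k+1}}[u_0] < +\infty$ (which holds because $u_0 \in \LO{\infty}$), yields $\EE_{2^{k+1}}[u](t) \leq C(T,k+1)$ on $(0,T)$, closing the induction.

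For an arbitrary integer $p \geq 2$, I would then pick $k$ with $2^k \geq p$ and conclude by the standard interpolation $\|u_i\|_{\LO{p}} \leq \|u_i\|_{\LO{1}}^{1-\gamma}\|u_i\|_{\LO{2^k}}^{\gamma}$, the $L^1$ factor being bounded by Lemma~\ref{lem-1}. The hard part is really not in this induction but rather in the base case $p=2$ (Lemma~\ref{lem3.5}), which sits exactly at the borderline $q = 4$, $q\theta = 2$ where the standard 1D Gagliardo--Nirenberg just fails to absorb. That is precisely the reason the refined Gagliardo--Nirenberg inequality of Lemma~\ref{MGN}, coupled with the logarithmic entropy bound of Lemma~\ref{lem-1}, was needed to recover the missing $\eps$-gain in the critical case; once $L^2$ is under control, the doubling induction above sits strictly below the borderline and requires no further refinement.
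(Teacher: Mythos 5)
Your proof is correct, and while it shares the paper's overall architecture --- the $L^p$-energy inequality of Lemma \ref{lem1} with $r=3$, followed by a Gagliardo--Nirenberg absorption of $\sumi\intO u_i^{p+2}\,dx$ into the dissipation, then integration in time --- the absorption step is organized differently. The paper handles every $p\geq 2$ in a single stroke: it invokes Lemma 2.3 of \cite{morgan2021global}, which uses the $L^\infty(0,T;L^2(\Omega))$ bound of Lemma \ref{lem3.5} as the low-norm anchor to produce $\intO u_i^{p+2}dx\leq\eps\|u_i^{p/2}\|_{H^1(\Omega)}^2+C_\eps(T)$ directly for arbitrary $p$, adds $\alpha_p\sumi\intO u_i^p\,dx$ to both sides, and closes with Gronwall. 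Your dyadic induction replaces that external interpolation lemma with the textbook one-dimensional Gagliardo--Nirenberg inequality anchored at $\|v_i\|_{L^1(\Omega)}=\|u_i\|_{L^{2^k}(\Omega)}^{2^k}$, which the induction hypothesis controls; the exponent bookkeeping ($q=2+2^{1-k}\leq 3$ for $k\geq 1$, hence $q\theta=\tfrac{2(q-1)}{3}\leq\tfrac{4}{3}<2$) is right, the additive lower-order term in the bounded-domain form of the inequality is correctly included, and the final interpolation down to non-dyadic $p$ is immediate. What your route buys is self-containedness (nothing needed beyond Lemma \ref{lem1} and standard GN) at the cost of an induction; what the paper's buys is brevity. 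Your closing remark is also accurate: the genuinely critical step is $p=2$, where $q=4$ sits exactly at the borderline $q\theta=2$, and the modified inequality of Lemma \ref{MGN} together with the $L\log L$ bound of Lemma \ref{lem-1} is what supplies the missing gain --- both arguments take that base case as given.
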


\begin{proof}
  By using the $E_p[u]$ defined in \eqref{Lpenergy} and Lemma \ref{lem1}, we obtain
	\begin{equation}\label{new8}
		\frac{d}{dt}\EE_p[u] + \alpha_p\sumi \intO |\pa_{x}(u_i^{p/2})|^2dx \leq C\bra{1+\sumi \intO u_i^{p+2}dx}.
	\end{equation}
	Since we are in one dimension and have $\|u_i(t)\|_{\LO{2}} \leq C(T)$, we can apply \cite[Lemma 2.3]{morgan2021global} to obtain
		\begin{equation}\label{new9}
			\intO u_i^{p+2}dx \leq \eps\|u_i^{p/2}\|_{H^1(\Omega)}^2 + C_\eps(T).
		\end{equation}

	By adding $\alpha_p\sumi \intO u_i^pdx$ to both sides of \eqref{new8} and using \eqref{new9} with $\eps = \alpha_p/2$, it yields
	\begin{equation*}
		\frac{d}{dt}\EE_p[u] + \frac{\alpha_p}{2}\sumi \intO u_i^{p}dx \leq C(T).
	\end{equation*}
	This leads to
	\begin{equation*}
		\frac{d}{dt}\EE_p[u] + \alpha\EE_p[u] \leq C(T)
	\end{equation*}
for some constant $\alpha>0$. Gronwall's lemma yields
	\blue{\begin{equation*}
		\sup_{t\in (0,T)}\EE_p[u(t)]\leq C_1(T),
	\end{equation*}}
which implies \eqref{eq-Lp}.
\end{proof}

We now can prove the global existence part of Theorem \ref{th1.3}
\begin{proof}[\textbf{Proof of Theorem \ref{th1.3} - Global existence}]
The existence of local in time, classical, nonnegative solutions $u=(u_1,u_2,\cdots,u_m)$ on some maximal time interval $[0,T_{\max})$ follows from classical results.

Our aim in the following is to prove \blue{for all finite $T\in(0,T_{\max}]$,}
$$\limsup_{t\rightarrow T_{}}\|u_i(t)\|_{L^\infty(\Omega)}<\infty, \quad \text{for all}~i=1,2,\cdots,m. \quad \Rightarrow T_{\max}=+\infty.$$	
From the polynomial bound \eqref{polynomial}, \blue{we have by the comparison principle that} $u_i \le v_i$ where $v_i$ solves
 \begin{equation*}
	 \begin{cases}
		 \pa_t v_i - d_i\pa_{xx}v_i = h_i:= C\bra{1+\sum_{j=1}^m u_j^{\ell}}, &(x,t)\in Q_T,\\
		 \pa_xv_i(0,t) = \pa_xv_i(L,t) = 0, &t\in (0,T),\\
		 v_i(x,t) = u_{i,0}(x), &x\in\Omega.
	 \end{cases}
 \end{equation*}
 Now thanks to Lemma \ref{lem-Lp}, $h_i \in L^p(Q_T)$ for any $1\leq p <\infty$. \blue{By the smoothing effect of} the heat operator $\pa_t - d_i\pa_{xx}$, it follows that for some $p>\frac{3}{2}$,
 \begin{equation*}
 	\|v_i\|_{L^{\infty}(Q_T)} \le C\bra{\|u_{i,0}\|_{\LO{\infty}}, \|h_i\|_{\LQ{p}}}.
 \end{equation*}
 Thus
 $$\|u_i\|_{L^\infty(Q_T)}\leq C_T,$$
 and consequently, $T_{\max}  = +\infty$.
\end{proof}

\medskip
We now turn to the uniform-in-time bounds for the case \blue{when $k_2 = k_3 = 0$ or $k_2<0$ in \eqref{E}, i.e. the case of entropy dissipation. In order to do that, it's sufficient to prove there exists $C>0$ independent of $\tau>0$ such that}
\begin{equation}\label{1}
\begin{split}
\sup_{\tau\in \mathbb{N}}\|u_i\|_{L^\infty(\Omega\times(\tau,\tau+1))}\leq C.
\end{split}
\end{equation}

Since $k_2=k_3=0$, we have from \eqref{E}
$$\sumi f_i(x,t,u)(\log u_i+\mu_i)\leq0.$$
This implies the following uniform bounds.
\begin{lemma}\label{lem-U1}
	Assume \eqref{E} with $k_2 = k_3 = 0$ \blue{or $k_2<0$}. Then, there exists constant $C>0$ depending only on $\|u_0\|_{\LO{\infty}}$ and $|\Omega|$, such that
	\begin{equation*}
		\sup_{t>0}\|u_i(t)\|_{\LO{1}} \leq C \quad \text{and} \quad  \sup_{t>0}\|(u_i\log u_i)(t)\|_{\LO{1}} \leq C.
	\end{equation*}	
\end{lemma}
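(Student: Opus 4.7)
The plan is to essentially repeat the computation carried out in Lemma \ref{lem-1}, but exploit the fact that with $k_2=k_3=0$ the entropy inequality \eqref{E} becomes a pure dissipation identity, so that no Gronwall argument is required and the resulting bounds are independent of $t$.

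Concretely, I would start from the identity (valid as long as the classical solution exists, which is globally by the existence part of Theorem \ref{th1.3})
\begin{equation*}
\frac{d}{dt}\sum_{i=1}^{m}\intO\bra{u_i(\log u_i+\mu_i)-u_i}dx
= -\sumi d_i\intO\frac{|\pa_x u_i|^2}{u_i}dx+\sumi\intO f_i(x,t,u)(\log u_i+\mu_i)dx.
\end{equation*}
Under \eqref{E} with $k_2=k_3=0$ the last term is non-positive, and the gradient term is clearly non-positive, so the functional
\begin{equation*}
\mathcal H(t):=\sumi\intO\bra{u_i(t)(\log u_i(t)+\mu_i)-u_i(t)}dx
\end{equation*}
is non-increasing on $[0,\infty)$. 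Consequently $\mathcal H(t)\leq \mathcal H(0)\leq C(\|u_0\|_{\LO{\infty}},|\Omega|)$ for all $t\geq 0$.

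Next, to extract an $L^1$ bound from this one-sided bound on $\mathcal H(t)$, I would reuse the elementary inequality $x\log x-x+1\geq Lx-e^L+1$ for all $x\geq 0, L>0$, applied with $L=2\max_{i}|\mu_i|$ as in the proof of Lemma \ref{lem-1}. This turns the upper bound on $\mathcal H(t)$ into
\begin{equation*}
L\sumi\intO u_i(t)dx- e^L m|\Omega|+m|\Omega|-\sumi\mu_i\intO u_i(t)dx\leq \mathcal H(0)+m|\Omega|,
\end{equation*}
and since $|\mu_i|\leq L/2$, the left-hand side absorbs a fraction of $\sumi\int_{\Omega}u_i$ to give the uniform-in-time bound $\sup_{t\geq 0}\sumi\|u_i(t)\|_{\LO{1}}\leq C$.

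Finally, combining this $L^1$ bound with the upper bound on $\mathcal H(t)$ and using $x\log x\geq x-1$ to control the negative part of $u_i\log u_i$, one obtains $\sup_{t\geq 0}\|u_i\log u_i\|_{\LO{1}}\leq C$ with a constant depending only on $\|u_0\|_{\LO{\infty}}$ and $|\Omega|$. There is no real obstacle here: the only point to check carefully is the sign of the reaction term (which is where $k_2=k_3=0$ matters decisively, because otherwise one cannot conclude monotonicity of $\mathcal H$ and must invoke Gronwall, giving a $T$-dependent bound as in Lemma \ref{lem-1}).
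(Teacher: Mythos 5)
Your proposal is correct and follows essentially the same route as the paper's own proof: monotonicity of the entropy functional under \eqref{E} with $k_2=k_3=0$ (so no Gronwall step), followed by the elementary inequality $x\log x-x+1\geq Lx-e^L+1$ with $L=2\max_i|\mu_i|$ to extract the uniform $L^1$ bound, and then $x\log x\geq x-1$ for the $L\log L$ bound. No gaps.
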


\begin{proof}	
From the entropy inequality \eqref{E} we have
\begin{equation*}
\begin{split}
\frac{d}{dt}\int_{\Omega}\sum^m_{i=1}&(u_i(\log u_i+\mu_i)-u_i)dx=\int_{\Omega}\sum^m_{i=1}(\log u_i+\mu_i)\partial_tu_idx\\
&=-\int_{\Omega}\sum^m_{i=1}d_i\frac{|\nabla u_i|^2}{u_i}dx+\int_{\Omega}\sum^m_{i=1}(\log u_i+\mu_i)f_i(x,t,u)dx\le 0.
\end{split}
\end{equation*}
Integration on $(0,t)$ provides
\begin{equation*}
\begin{split}
\sumi\int_{\Omega}&\bra{ u_i(t)\log u_i(t)+ (\mu_i-1)u_i(t)}dx\leq \sumi\int_{\Omega}\bra{ u_{i0}\log u_{i0} + (\mu_i-1)u_{i0}}dx .
\end{split}
\end{equation*}
We rewrite this inequality as
\begin{equation}\label{eq-U3.1}
\begin{split}
\sumi\int_{\Omega}( u_i(t)\log u_i(t)&-u_i(t)+1)dx\leq \sumi\int_{\Omega}\bra{ u_{i0}\log u_{i0}+ (\mu_i-1)u_{i0}}dx \\
&+m|\Omega|-\sumi\int_{\Omega}\mu_iu_i(t)dx, \quad \forall t\in[0,T).
\end{split}
\end{equation}
Using the inequality $x\log x-x+1\geq Kx-e^K+1$ for all $K>0$ we have
\blue{\begin{equation*}
\begin{split}
\sumi\int_{\Omega} (u_i(t)\log u_i(t)-u_i(t)+1)\,dx\geq k\sumi\int_{\Omega}u_i(t)dx-e^km|\Omega|+m|\Omega|
\end{split}
\end{equation*}}
with $k=2\max_{i=1,\cdots,m}|\mu_i|$. Therefore we obtain the estimate
\begin{equation*}
\begin{split}
\sumi\int_{\Omega}u_i(t)dx&\leq \frac{2}{3k}\bra{\sumi\int_{\Omega}\bra{ u_{i0}\log u_{i0} + (\mu_i-1)u_{i0}}dx +e^km|\Omega|} \\
&\leq C.
\end{split}
\end{equation*}
Which, together with the positivity of the solution, leads to the uniform in time $L^1$-bound. The bound of $\|(u_i\log u_i)(t)\|_{\LO{1}}$ follows immediately from \eqref{eq-U3.1} and $x\log x\geq x-1$ for all $x\geq 0$, i.e.
\begin{equation*}
\begin{split}
\|(u_i\log u_i)(t)\|_{\LO{1}}\leq C.
\end{split}
\end{equation*}
\end{proof}

\begin{lemma}\label{lem-U2}
	Assume \eqref{E} with $k_2 = k_3 = 0$ \blue{or $k_2<0$, \eqref{A1} and \eqref{A4}} with $r=3$. Then
	\begin{equation*}
		\sup_{t>0}\|u_i(t)\|_{\LO{2}} \leq C.
	\end{equation*}
\end{lemma}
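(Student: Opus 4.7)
The plan is to mimic the proof of Lemma \ref{lem3.5}, but to upgrade the time-local bound into a uniform-in-time one by exploiting the genuine entropy dissipation available when $k_2=k_3=0$. The starting point is Lemma \ref{lem1} applied with $p=2$ and $r=3$, which yields
\begin{equation*}
\frac{d}{dt}\EE_2[u] + \alpha_2 \sumi \intO |\pa_x u_i|^2\,dx \leq C\bra{1 + \sumi \|u_i\|_{\LO{4}}^4}.
\end{equation*}
The crucial difference with Lemma \ref{lem3.5} is that, thanks to Lemma \ref{lem-U1}, both $\|u_i(t)\|_{\LO{1}}$ and $\|(u_i\log u_i)(t)\|_{\LO{1}}$ are now bounded by a constant that does \emph{not} grow with $T$.

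Next I would invoke the modified Gagliardo--Nirenberg inequality of Lemma \ref{MGN} with $n=1$: for arbitrary $\eps>0$,
\begin{equation*}
\|u_i\|_{\LO{4}}^4 \leq \eps\, \|u_i\|_{H^1(\Omega)}^2 \|u_i\log|u_i|\|_{\LO{1}}^2 + c_\eps \|u_i\|_{\LO{1}}
\leq \eps C_* \|u_i\|_{H^1(\Omega)}^2 + c_\eps C_*,
\end{equation*}
where $C_*$ is the uniform constant from Lemma \ref{lem-U1}. The next step is to trade the $H^1$-norm for the gradient norm alone. This is done via one-dimensional interpolation combined with Poincar\'e's inequality applied to $u_i - \bar u_i$ with $\bar u_i := |\Omega|^{-1}\intO u_i\,dx$: since $\bar u_i$ is uniformly bounded by Lemma \ref{lem-U1}, one gets
\begin{equation*}
\|u_i\|_{\LO{2}}^2 \leq 2\|u_i - \bar u_i\|_{\LO{2}}^2 + 2|\Omega|\bar u_i^2 \leq C_P \|\pa_x u_i\|_{\LO{2}}^2 + C,
\end{equation*}
so that $\|u_i\|_{H^1(\Omega)}^2 \leq (1+C_P)\|\pa_x u_i\|_{\LO{2}}^2 + C$. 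Choosing $\eps$ small enough to absorb the $\eps C_*(1+C_P)\|\pa_x u_i\|_{\LO{2}}^2$-term into $\alpha_2 \|\pa_x u_i\|_{\LO{2}}^2$ produces
\begin{equation*}
\frac{d}{dt}\EE_2[u] + \frac{\alpha_2}{2}\sumi \|\pa_x u_i\|_{\LO{2}}^2 \leq C,
\end{equation*}
with $C$ independent of $t$.

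Finally, I would close the argument by a Gronwall-type inequality. Since $\EE_2[u]$ is, up to a constant, bounded above by $\sumi \|u_i\|_{\LO{2}}^2$, and since $\|u_i\|_{\LO{2}}^2 \leq C_P \|\pa_x u_i\|_{\LO{2}}^2 + C$ by the Poincar\'e estimate above, there is a constant $\lambda>0$ such that
\begin{equation*}
\lambda\,\EE_2[u] \leq \sumi \|\pa_x u_i\|_{\LO{2}}^2 + C.
\end{equation*}
Plugging this into the previous differential inequality yields $\frac{d}{dt}\EE_2[u] + \tilde\lambda \,\EE_2[u] \leq C$, and the classical Gronwall lemma gives $\sup_{t>0}\EE_2[u(t)] \leq C$, hence $\sup_{t>0}\|u_i(t)\|_{\LO{2}} \leq C$ for all $i$.

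The main obstacle I anticipate is ensuring that every constant appearing in the argument is genuinely independent of time. The time-dependent bound in Lemma \ref{lem3.5} arose solely from the time-growing $L\log L$ estimate of Lemma \ref{lem-1}; the upgrade of Lemma \ref{lem-U1} to uniform bounds is precisely what makes the MGN step time-uniform. The other delicate point is producing a coercive lower bound on $\EE_2[u]$ in terms of the dissipation: here the uniform $L^1$-control of $u_i$ (again from Lemma \ref{lem-U1}) is essential, since it allows Poincar\'e's inequality on $u_i - \bar u_i$ to convert the gradient bound into a bound on the full $L^2$-norm with time-independent constants.
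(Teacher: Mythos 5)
Your proposal is correct and follows essentially the same route as the paper: the $p=2$ energy inequality from Lemma \ref{lem1}, the uniform-in-time $L^1$ and $L\log L$ bounds of Lemma \ref{lem-U1} fed into the modified Gagliardo--Nirenberg inequality to absorb the $L^4$ term, and a Gronwall argument on $\EE_2[u]$. The only (harmless) difference is that you justify the coercivity step explicitly via Poincar\'e's inequality applied to $u_i-\bar u_i$ together with the uniform mean bound, whereas the paper simply asserts $\sumi\|u_i\|_{H^1(\Omega)}^2\geq\beta_0\EE_2[u]$.
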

\begin{proof}
	From Lemma \ref{lem1}, it follows by choosing $p=2$ that
	\begin{equation*}
		\frac{d}{dt}\EE_2[u]  + \alpha_2\sumi \intO|\pa_x u_i|^2dx \leq C\bra{1+\sumi \|u_i\|_{\LO{4}}^{4}}.
	\end{equation*}
	Therefore,
	\begin{equation*}
		\frac{d}{dt}\EE_2[u]  + \alpha_2\sumi \|u_i\|_{H^1(\Omega)}^2 \leq C\bra{1+\sumi \|u_i\|_{\LO{4}}^{4}}.
	\end{equation*}	
	Fix $\eps>0$. Thanks to Lemmas \ref{lem-U1} and \ref{MGN},
	\begin{equation*}
		\sumi \|u_i\|_{\LO{4}}^4 \leq \eps\sumi \|u_i\|_{H^1(\Omega)}^2 + C_\eps.
	\end{equation*}
	By choosing $\eps>0$ small enough, it follows that
	\begin{equation*}
		\frac{d}{dt}\EE_2[u] + \frac{\alpha_2}{2}\sumi \|u_i\|_{H^1(\Omega)}^2 \leq C,
	\end{equation*}
	where the constant $C$ is independent of time $t>0$. By using $\sumi \|u_i\|_{H^1(\Omega)}^2 \geq \beta_0\EE_2[u]$ for some $\beta_0>0$, we obtain
	\begin{equation*}
		\frac{d}{dt}\EE_2[u] + \frac{\alpha_2\beta_0}{2}\EE_2[u] \leq C.
	\end{equation*}
	With the classical Gronwall lemma, we can finish the proof of Lemma \ref{lem-U2}.
\end{proof}

\begin{lemma}\label{lem-U5}
	Assume \eqref{E} with $k_2 = k_3 = 0$ \blue{or $k_2<0$, \eqref{A1} and \eqref{A4}} with $r=3$. Then for any $2\leq p \in \mathbb N$
	\begin{equation}\label{eq-U3.3}
		\sup_{t>0}\|u_i(t)\|_{\LO{p}} \leq C(p).
	\end{equation}
\end{lemma}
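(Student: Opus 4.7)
The plan is to rerun the proof of Lemma \ref{lem-Lp} verbatim, but making sure every constant that appears is independent of $t$. This is possible because Lemma \ref{lem-U1} and Lemma \ref{lem-U2} already provide \emph{time-uniform} $L^1$, $L \log L$ and $L^2$ bounds for the components $u_i$, which are precisely the ingredients that appeared in Lemma \ref{lem-Lp} with $T$-dependent constants.

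First, I would apply Lemma \ref{lem1} with $r = 3$ to obtain, along the trajectory of the solution,
\begin{equation*}
	\frac{d}{dt}\EE_p[u] + \alpha_p \sumi \intO \left|\pa_x(u_i^{p/2})\right|^2 dx \leq C\bra{1 + \sumi \intO u_i^{p+2}\,dx},
\end{equation*}
where the constants $\alpha_p, C$ depend only on $p$ and on the structural constants of the system; in particular they are independent of time.

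Next, since Lemma \ref{lem-U2} now gives a \emph{time-uniform} bound $\sup_{t>0}\|u_i(t)\|_{\LO{2}} \leq C$, I would invoke the one-dimensional interpolation inequality \cite[Lemma 2.3]{morgan2021global} applied to $u_i^{p/2}$, which yields, for any $\eps > 0$,
\begin{equation*}
	\intO u_i^{p+2}\, dx \leq \eps \, \|u_i^{p/2}\|_{H^1(\Omega)}^2 + C_\eps,
\end{equation*}
with $C_\eps$ independent of $t$ (since the control $\|u_i^{p/2}\|_{L^2}^2 = \|u_i\|_{L^p}^p$ is no longer needed at the interpolation step, only the uniform $L^2$ bound enters). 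Picking $\eps$ small, adding $\alpha_p \sumi \intO u_i^p \, dx$ to both sides, and using the elementary equivalence $\beta_0 \EE_p[u] \leq \sumi \|u_i\|_{\LO{p}}^p + \sumi \|\pa_x(u_i^{p/2})\|_{\LO{2}}^2$, I obtain
\begin{equation*}
	\frac{d}{dt}\EE_p[u] + \alpha\, \EE_p[u] \leq C,
\end{equation*}
with constants $\alpha, C > 0$ independent of time. Classical Gronwall's lemma then gives $\sup_{t>0} \EE_p[u(t)] \leq C(p)$, and hence \eqref{eq-U3.3}.

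The main obstacle, if any, is simply a bookkeeping one: confirming that at no step does a hidden time-dependent constant sneak in (for instance in the interpolation lemma, where the $L^1$-norm of $u_i^{p/2}\log|u_i^{p/2}|$ could in principle be needed; but on reinspection of \cite[Lemma 2.3]{morgan2021global}, only the $L^2$-norm of $u_i$ is required, which is now uniform in $t$). All the heavy technical machinery—the $L^p$-energy method of Lemma \ref{lem1} and the 1D interpolation—has already been established, so the present lemma is essentially a time-uniform upgrade of Lemma \ref{lem-Lp}.
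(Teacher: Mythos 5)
Your proposal is correct and follows essentially the same route as the paper's own proof: apply Lemma \ref{lem1} with $p$ arbitrary, use the time-uniform $L^2$ bound from Lemma \ref{lem-U2} to invoke \cite[Lemma 2.3]{morgan2021global} with a small $\eps$, absorb the gradient term, bound $\EE_p[u]$ by the $H^1$-norms of $u_i^{p/2}$, and conclude by Gronwall. Your bookkeeping remark about which norms enter the interpolation step is exactly the point the paper relies on, so there is nothing to add.
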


\begin{proof}
  By using the $E_p[u]$ defined in \eqref{Lpenergy} and Lemma \ref{lem1}, we obtain
	\begin{equation*}
		\frac{d}{dt}\EE_p[u] + \alpha_p\sumi \intO |\pa_{x}(u_i^{p/2})|^2dx \leq C\bra{1+\sumi \intO u_i^{p+2}dx}.
	\end{equation*}
	Adding $\alpha_p\sumi \intO u_i^p$ to both sides and using Young's inequality yield
	\begin{equation}\label{j1}
		\frac{d}{dt}\EE_p[u] + \alpha_p\sumi \|u_i^{p/2}\|_{H^1(\Omega)}^2 \leq C\bra{1+\sumi \intO u_i^{p+2}dx}.
	\end{equation}
	Thanks to Lemma \ref{lem-U2}, we can apply \cite[Lemma 2.3]{morgan2021global} to estimate for any $\delta>0$
	\begin{equation*}
		\intO u_i^{p+2} \leq \delta\|u_i^{p/2}\|_{H^1(\Omega)}^2 + C_\delta \quad \forall i=1,\ldots, m.
	\end{equation*}
	By choosing $\delta = \alpha_p/2$, inserting this into \eqref{j1}, and using $\sumi \|u_i^{p/2}\|_{H^1(\Omega)}^2 \geq \beta_1\EE_p[u]$ for some $\beta_1>0$, we get
	\begin{equation*}
		\frac{d}{dt}\EE_p[u] + \frac{\alpha_p \beta_1}{2}\EE_p[u] \leq C,
	\end{equation*}
	where the constant $C$ is \textit{independent of the time $t>0$}. The classical Gronwall lemma yields the desired bound \eqref{eq-U3.3}.
\end{proof}
\begin{proof}[\textbf{Proof of Theorem \ref{th1.3}-- Uniform-in-time bounds}]
In order to prove \eqref{1}, we define an cut-off function $\varphi_\tau$ as
\begin{equation*}
\varphi_\tau(t)=
\begin{cases}
0  \quad\text{for on}  \quad[0,\tau],\\
1  \quad\text{for on}  \quad[\tau+1,\infty)
\end{cases}
\end{equation*}
and $0\leq\varphi_\tau'\leq2$. Direct computations leads to
\begin{equation*}
\begin{cases}
\partial_t(\varphi_\tau u_i)-d_i\partial_{xx}(\varphi_\tau u_i)=\varphi'_\tau u_i+\varphi_\tau f_i(x,t,u),&(x,\tau)\in \Omega\times(\tau,\tau+2),\\
\pa_x(\vpt u_i)(0,t) = \pa_x(\vpt u_i)(L,t) = 0, &t\in (\tau,\tau+2),\\
(\vpt u_i)(x,\tau)=0, &x\in \Omega.
\end{cases}
\end{equation*}
Thanks to Lemma \ref{lem-U5} and polynomial upper bound \eqref{polynomial}, we have
\begin{equation*}
\varphi'_\tau u_i+\varphi_\tau f_i(x,t,u)\leq G_i(x,t,u):=C\bra{1+\sum^m_{k=1}u^\ell_k}.
\end{equation*}
Thanks to Lemma \ref{lem-U5} for any $2\leq p<\infty$ a constant $C_p>0$ such that
\begin{equation*}
\|G_i(x,t,u)\|_{L^p(\Omega\times (\tau,\tau+2))}\leq C_p,\quad \forall i=1,\cdots,m.
\end{equation*}
where $C_p$ is a constant independent of $\tau \in \N$.
Therefore, by the smoothing effect of the heat operator $\pa_t - d_i\pa_{xx}$ and comparison principle we get
\begin{equation*}
\|\varphi_\tau u_i\|_{L^\infty(\Omega\times (\tau,\tau+2))}\leq C,\quad \forall i=1,\cdots,m,
\end{equation*}
where $C$ is a constant independent of $\tau\in \mathbb{N}$. Thanks to $\varphi_\tau\geq 0$ and $\varphi|_{(\tau+1,\tau+2)}\equiv1$, we obtain finally the uniform-in-time bound
\begin{equation*}
\sup_{\tau\in \mathbb{N}}\|u_i\|_{L^\infty(\Omega\times(\tau,\tau+1))}\leq C,\quad \forall i=1,\cdots,m,
\end{equation*}
and the proof of Theorem \ref{th1.3} is complete.
\end{proof}

\subsection{Discontinuous diffusion coefficients - Proof of Theorem \ref{th1.4}}\label{sub3.2}

Due to the discontinuity of the diffusion coefficients, we do not expect to obtain strong solutions to \eqref{discontinuous_diffusion}. Therefore, we will work with weak solutions whose concept is defined in the following
\begin{definition}
  A vector of non-negative state variables $u=(u_1,\cdots,u_m)$ is called a weak solution to \eqref{discontinuous_diffusion} on $(0,T)$ if
  \begin{equation*}
     u_i\in C([0,T];L^2(\Omega))\cap L^2(0,T;H^1_0(\Omega)),~f_i(x,t,u)\in L^2(0,T;L^2(\Omega)),
  \end{equation*}
  with $u_i(\cdot,0)=u_{i,0}(\cdot)$ for all $i=1,\cdots,m$, and for any test function $\varphi\in L^2(0,T;H^1_0(\Omega))$ with $\partial_t\varphi\in L^2(0,T;H^{-1}(\Omega))$, one has
  \begin{equation*}
  \begin{split}
    \intO u_i(x,T)\varphi(x,T)dx-\int^T_0\intO u_i\pa_t\varphi dxdt&+\int^T_0\intO D_i(x,t)\nabla u_i\cdot \nabla\varphi dxdt\\
    &=\intO u_{i,0}\varphi(x,0)dx + \int^T_0\intO f_i(x,t,u)\varphi dxdt.
  \end{split}
  \end{equation*}
\end{definition}
\begin{proof}[\textbf{Proof of Theorem \ref{th1.4}}]
We start with a truncated system where the nonlinearities are regularized to be bounded. Then crucial estimates which are uniform with respect to the regularization parameters are derived using the $L^p$-energy estimates as in the case of constant diffusion coefficients. Of importance are the bounds in $\LQ{\infty}$ of the approximate solutions. The global existence of solutions follows straightforwardly thanks to these bounds and passing to the limit of the regularization. Finally, the uniform-in-time bounds are obtained similarly to the proof of Theorem \ref{th1.3}.

\medskip

For any $\varepsilon>0$, we consider the truncated system: for all $i=1,\cdots,m$,
\begin{equation}\label{eq-truncated}
  \begin{cases}
    \pa_t u_i^{\varepsilon} - \nabla_x\cdot (D_i(x,t)\nabla_x u_i^{\varepsilon}) = f_i^{\varepsilon}(x,t,u^{\varepsilon}), & x\in\Omega, \; t>0,\\
	D_i(x,t)\nabla_x u_i^{\varepsilon}(x,t)\cdot \nu(x) = 0, & x\in\pa\Omega, t>0,\\
	u_{i}^{\varepsilon}(x,0) = u_{i,0}^{\varepsilon}(x), & x\in\Omega,
  \end{cases}
\end{equation}
where
  \begin{equation*}
  \begin{split}
    f_i^{\varepsilon}(x,t,u^{\varepsilon}):=\frac{f_i(x,t,u^{\varepsilon})}{[1+\varepsilon\sum^m_{j=1}|f_j(x,t,u^{\varepsilon})|]^{-1}}.
  \end{split}
  \end{equation*}
and $u_{i,0}^{\varepsilon}\in \LO{\infty}$ such that $\|u_{i,0}^{\varepsilon}-u_{i,0}\|_{\LO{\infty}}\stackrel{\varepsilon\rightarrow0}{\longrightarrow}0$. Note that since $u_{i,0}\in\LO{\infty}$ one can take $u_{i,0}^{\varepsilon}=u_{i,0}$ to obtain the global existence. Following \cite[Lemma 2.1]{fitzgibbon2021reaction}, for any fixed $\eps>0$, there exists a global bounded, non-negative weak solution to \eqref{eq-truncated}.

\medskip

We would like to emphasize that all constants in this subsection are independent of $\varepsilon$. By using the observation
\begin{align*}
	\sumi f_i^{\eps}(u^\eps)(\log u_i^\eps + \mu_i) &= \frac{1}{1+\eps\sumi |f_i(u^\eps)|}\sumi f_i(u^\eps)(\log u_i^\eps + \mu_i)\\
	&\leq \frac{1}{1+\eps\sumi |f_i(u^\eps)|}\bra{k_2\sumi u_i(\log u_i + \mu_i - 1) + k_3}\\
	&\leq C\sumi u_i(\log u_i + \mu_i - 1) + C.
\end{align*}
We obtain, similarly to Lemma \ref{lem-1}
	\begin{equation*}
		\|(u^{\varepsilon}_i\log u^{\varepsilon}_i)(t)\|_{\LO{1}} \leq C\bra{1+e^{CT}} \quad \text{ for all } \quad t\in [0,T),
	\end{equation*}
	and as a consequence,
	\begin{equation*}
		\|u^{\varepsilon}_i(t)\|_{\LO{1}} \leq C\bra{1+e^{CT}} \quad \text{ for all } \quad t\in [0,T).
	\end{equation*}	

We establish next the estimates in $L^p$-norm by using the $L^p$-energy estimates similarly to the case of constant diffusion coefficients. Thanks to \cite[Lemma 2.3]{fitzgibbon2021reaction}, we get similar results to Lemma \ref{lem3.3} where $f_i$ are replaced by $f_i^{\eps}$, i.e.
\begin{equation*}
	\sumi \theta_i f_i^{\eps}(x,t,u^\eps) \leq K_\theta\bra{1+\sumi u_i^\eps}^r.
\end{equation*}
This allows us to repeat the arguments in Lemmas \ref{lem1} and \ref{lem3.5} to obtain
\begin{equation}\label{L2-ueps}
	\sup_{i=1,\cdots,m}\|u_i^\eps(t)\|_{\LO{2}} \leq C(T), \quad \forall t\in (0,T).
\end{equation}
We remark that the discontinuous diffusion coefficients complicate the integration by parts when differentiating $\EE_p(t)$, but this issue is overcome by using the results in \cite[Lemma 4.2]{fitzgibbon2021reaction}. From \eqref{L2-ueps} and the modified Gagliardo-Nirenberg inequality in Lemma \ref{MGN}, we can repeat the arguments in Lemma \ref{lem-Lp} to get for any $2\leq p \in \mathbb N$,
\begin{equation*}
	\sup_{i=1,\ldots, m}\|u_i^{\eps}(t)\|_{\LO{p}} \leq C(T,p), \quad \forall t\in (0,T).
\end{equation*}
This is enough to obtain bounds in $\LQ{\infty}$ of the approximate solutions $u^\eps$ to system \eqref{eq-truncated} which are \textit{uniform in $\eps>0$}. It follows that
  \begin{equation*}
		\|f_i(x,t,u^{\varepsilon})\|_{\LQ {\infty}}\leq C_T, \quad \forall i=1,\cdots,m.
  \end{equation*}
 By multiplying \eqref{eq-truncated} by $u^{\varepsilon}_i$ then integrating on $Q_T$ gives
  \begin{equation*}
  \begin{split}
    \frac{1}{2}\|u^{\varepsilon}_i(T)\|^2_{\LO2}&+\int^T_0\intO D_i(x,t)\nabla u^{\varepsilon}_i\cdot \nabla u^{\varepsilon}_idxdt\\
    &=\frac{1}{2}\|u^{\varepsilon}_{i,0}\|^2_{\LO2}+\int^T_0\intO f^{\varepsilon}_i(u^{\varepsilon})u^{\varepsilon}_idxdt.
  \end{split}
  \end{equation*}
  Using \eqref{eq-di} we obtain
  \begin{equation*}
  \begin{split}
    \|u^{\varepsilon}_i(T)\|^2_{\LO2}+2\lambda\|\nabla u^{\varepsilon}_i\|_{\LQ2}\leq C_T, \quad \forall i=1,\cdots,m.
  \end{split}
  \end{equation*}
  Thus
  \begin{equation*}
  \begin{split}
   \{u^{\varepsilon}_i\}_{\varepsilon\geq 0}~&\text{is bounded uniformly in $\varepsilon$ in }~\LQ{\infty}\cap L^2(0,T;H^1_0(\Omega)),\\
   \{\partial_tu^{\varepsilon}_i\}_{\varepsilon\geq 0}~ &\text{is bounded uniformly in $\varepsilon$ in}~L^2(0,T;H^{-1}(\Omega)).
  \end{split}
  \end{equation*}
  The classical Aubin-Lions lemma gives the strong convergence
  \begin{equation*}
  \begin{split}
   u_{i}^{\varepsilon}\stackrel{\varepsilon\rightarrow0}{\longrightarrow} u_{i}~\text{strongly in}~L^{2}(0,T;\LO2).
  \end{split}
  \end{equation*}
  Consequently, for any $1\leq p<\infty$,
  \begin{equation*}
  \begin{split}
   u_{i}^{\varepsilon}\stackrel{\varepsilon\rightarrow0}{\longrightarrow} u_{i}~\text{strongly in}~\LQ p,
  \end{split}
  \end{equation*}
  thanks to the uniform $L^{\infty}$-bound of $u_{i}^{\varepsilon}$. This is enough to pass to the limit in the weak formulation of \eqref{eq-truncated}
  \begin{equation*}
  \begin{split}
  -\intO \varphi(\cdot,0)u_{i,0}^{\varepsilon}dx &-\int^T_0\intO u_{i}^{\varepsilon}\partial_t\varphi dxdt+ \int^T_0\intO d_i(x,t)\nabla u_{i}^{\varepsilon}\cdot\nabla\varphi dxdt\\
    &= \int^T_0\intO f_i^{\varepsilon}(x,t,u^{\varepsilon})\varphi dxdt,
  \end{split}
  \end{equation*}
  to obtain that $u = (u_1,\cdots,u_m)$ is a global weak solution to \eqref{discontinuous_diffusion} and additionally
  \begin{equation*}
		\|u_i\|_{\LQ {\infty}}\leq C_T, \quad \forall i=1,\cdots,m.
  \end{equation*}

\medskip
The proof of uniform-in-time bounds follows similarly to that of Theorem \ref{th1.3}, so we omit the details here.
\end{proof}

	\medskip
\noindent{\bf Data Availability Statements.}
Data sharing not applicable to this article as no datasets were generated or analysed during the current study.

	\medskip
	\noindent{\bf Conflict of interest.} This paper has no conflict of interest.

	\medskip
	\noindent{\bf Acknowledgement.} B.Q. Tang is partially supported by NAWI Graz. C. Sun  are partially supported by NSFC Grants No. 12271227.  J. Yang are partially supported by NSFC Grants No. 12271227 and China Scholarship Council (Contract No. 202206180025).

	\medskip
	The authors sincerely thank the reviewers for their very careful reading of our manuscript and for providing valuable comments, which helped us to improve the presentation and readability of the paper.

\end{document}